

\documentclass[a4paper,11pt]{amsart} 
\usepackage[T1]{fontenc}
\usepackage[utf8]{inputenc} 


\usepackage{booktabs} 
\usepackage{array} 
\usepackage{paralist} 
\usepackage{verbatim} 
\usepackage{subfig} 
\usepackage[backend=bibtex,style=alphabetic,sorting=nyvt]{biblatex}
\usepackage{amssymb}
\usepackage[dvipsnames]{xcolor}
\usepackage{mathtools}
\usepackage[scr=rsfs,cal=boondox]{mathalfa}
\usepackage{amsthm}
\usepackage{tikz-cd}
\usepackage[colorlinks, allcolors=blue]{hyperref}
\usepackage{graphicx} 
\usepackage{calrsfs}
\usepackage{enumitem}
\mathtoolsset{showonlyrefs}
\DeclareMathAlphabet{\pazocal}{OMS}{fszplm}{m}{n}


\usepackage{fancyhdr} 
\setlength{\headheight}{13.0pt}
\pagestyle{fancy}

\fancyhf{}

\fancyhead[LE]{\thepage}
\fancyhead[CE]{\scshape \normalsize Romain Branchereau}
\fancyhead[CO]{\scshape \normalsize Generating series of modular symbols in $\SL_N$} 
\fancyhead[RO]{\thepage} 

  \setcounter{tocdepth}{1}
\bibliography{Revision-Eisenstein_biblio.bib}
\newcommand{\iotau}{\iota_{u\frac{\partial}{\partial u}}}

\newcommand{\8}{\infty}

\newcommand{\Z}{\mathbb{Z}}
\newcommand{\NN}{\mathbb{N}}

\newcommand{\id}{\mathbf{1}}

\newcommand{\Obf}{\underline{0}}

\newcommand{\tauud}{\underline{\tau}}
\newcommand{\wud}{\underline{w}}

\newcommand{\vud}{\underline{v}}

\newcommand{\jov}{\overline{j}}

\newcommand{\vbf}{\mathbf{v}}

\newcommand{\bas}{\mathrm{bas}}

\newcommand{\four}{\pazocal{F}}

\newcommand{\Span}{\mathrm{span}}
\newcommand{\SO}{\mathrm{SO}}
\newcommand{\Ad}{\mathrm{Ad}}

\newcommand{\SL}{\mathrm{SL}}
\newcommand{\Sp}{\mathrm{Sp}}

\newcommand{\KM}{\mathrm{KM}}
\newcommand{\BM}{\mathrm{BM}}

\newcommand{\GL}{\mathrm{GL}}

\newcommand{\Util}{\widetilde{U}}

\newcommand{\Gamtil}{\widetilde{\Gamma}}

\newcommand{\so}{\mathfrak{so}}

\newcommand{\D}{\mathbb{D}}

\newcommand{\HH}{\mathbb{H}}
\newcommand{\Escr}{\mathscr{E}}

\newcommand{\R}{\mathbb{R}}

\newcommand{\Q}{\mathbb{Q}}
\newcommand{\C}{\mathbb{C}}

\newcommand{\PP}{\mathbb{P}}

\newcommand{\dfrak}{\mathfrak{d}}
\newcommand{\p}{\mathfrak{p}}
\newcommand{\m}{\mathfrak{m}}

\newcommand{\g}{\mathfrak{g}}

\newcommand{\kfrak}{\mathfrak{k}}

\newcommand{\End}{\mathrm{End}}

\newcommand{\Hom}{\mathrm{Hom}}

\newcommand{\tr}{\mathrm{tr}}

\newcommand{\N}{\pazocal{N}}

\newcommand{\sing}{\mathrm{sing}}

\newcommand{\reg}{\mathrm{reg}}

\newcommand{\Ocal}{\mathcal{O}}
\newcommand{\Scal}{\pazocal{S}}

\newcommand{\Lcal}{\pazocal{L}}

\newcommand{\Tcal}{\pazocal{T}}

\newcommand{\Rcal}{\mathcal{R}}

\newcommand{\re}{\mathrm{Re}}

\newcommand{\Mat}{\mathrm{Mat}}
\newcommand{\im}{\mathrm{Im}}

\newcommand{\hooklongrightarrow}{\lhook\joinrel\longrightarrow}

\newcommand\restr[2]{{
  \left.\kern-\nulldelimiterspace 
  #1 
  \vphantom{\big|} 
  \right|_{#2} 
  }}
 

\newcommand{\etatil}{\widetilde{\eta}}

\newcommand{\vptil}{\widetilde{\varphi}}

\newcommand{\Th}{\mathrm{Th}}
\newcommand{\rd}{\textrm{rd}}
\newcommand{\cv}{\textrm{cv}}
\newcommand{\taut}{\textrm{taut}}


\newcommand{\pet}{\mathrm{pet}}
\newcommand{\image}{\mathrm{im}}

\newcommand{\Mcal}{\pazocal{M}}


\newtheoremstyle{mytheoremstyle} 
    {1.5em}                    
    {1.em}                    
    {\itshape}                   
    {}                           
    {\normalsize \bfseries}                   
    {.}                          
    {0,5em}                       
    {}  
    
\theoremstyle{mytheoremstyle}

\newtheorem{thm}{Theorem}[section]
\newtheorem*{thm*}{Theorem}
\newtheorem{cor}{Corollary}[thm]
\newtheorem*{cor*}{Corollary}
\newtheorem{lem}[thm]{Lemma}
\newtheorem{prop}[thm]{Proposition}

\newtheorem*{que*}{Question}

\theoremstyle{remark}
\newtheorem{rmk}{Remark}[section]

\numberwithin{equation}{section}

\renewenvironment{abstract}
 {\small
  \begin{center}
  \bfseries \abstractname\vspace{-.5em}\vspace{10pt}
  \end{center}
  \list{}{
    \setlength{\leftmargin}{2cm}%
    \setlength{\rightmargin}{\leftmargin}%
  }%
  \item\relax}
 {\endlist}

\begin{document}
\title{Eisenstein classes and generating series of modular symbols in $\SL_N$}
\author{ Romain Branchereau}
\date{} 
\maketitle

\begin{abstract}We define a theta lift between the homology in degree $N-1$ of a locally symmetric space associated to $\SL_N(\R)$ and the space of modular forms of weight $N$, similar to the Kudla-Millson lift in the orthogonal setting. We show that the Fourier coefficients of this lift are Poincaré duals to modular symbols associated to maximal parabolic subgroups. The constant term is a canonical cohomology classes obtained by transgressing the Euler class of a torus bundle. When $N=2$, we show that the lift surjects on the space of weight $2$ modular forms spanned by an Eisenstein series and the eigenforms with non-vanishing $L$-function.
\end{abstract}

{
  \tableofcontents
}

\section{Introduction}
\thispagestyle{empty}

\subsection{The work of Kudla-Millson} \label{intro: kudla millson} In a series of influential papers \cite{km2,km3,kmihes}, Kudla and Millson construct (in greater generality) a geometric theta lift relating the cohomology of orthogonal locally symmetric spaces $\D_\Gamma \coloneqq \Gamma \backslash \D$ of signature $(p,q)$ to holomorphic modular forms of weight $\frac{p+q}{2}$. The symmetric space $\D \simeq \SO(p,q)/\SO(p)\times \SO(q)$ is the Grassmanian of negative $q$-planes in a real quadratic space of signature $(p,q)$, and $\Gamma \subset \SO(p,q)$ is a torsion-free subgroup preserving an integral lattice in $\R^{p+q}$.
 The central ingredient of their construction is a suitable differential $q$-form $\varphi_{\KM} \in \Omega^q(\D) \otimes \Scal(\R^{p+q})$ valued in the space of Schwartz function in $\Scal(\R^{p+q})$. The theta machinery introduced by Weil \cite{weil} produces a theta kernel
\begin{align}
\Theta_{\KM}(z,\tau) \in \Omega^q(\D)^\Gamma \otimes C^\8(\HH), \qquad z \in \D, \ \quad \tau \in \HH
\end{align}
that transforms like a modular form of weight $(p+q)/2$ in $\tau$, for a certain level $\Gamma'\subseteq \SL_2(\Z)$ depending on the choice of the lattice. This theta kernel can be used to define two lifts. Let $\Mcal_{\frac{p+q}{2}}(\Gamma')$ be the space of modular forms of weight $(p+q)/2$, and $\Scal_{\frac{p+q}{2}}(\Gamma')$ be the subspace of cusp forms. On the one hand, the Petersson inner product of a cusp form $f$ with the theta kernel is a closed form, which defines a lift
\begin{align}
\Lambda_{\KM} \colon \Scal_{\frac{p+q}{2}}(\Gamma') \longrightarrow H^q(\D_\Gamma;\C).
\end{align} On the other hand, we can integrate the theta kernel on a cycle $C$ in $Z_q(\D_\Gamma;\Z)$ to define
\begin{align} \label{KMlift intro}
\Theta_{\KM} \colon H_q(\D_\Gamma;\Z) \longrightarrow \Mcal_{\frac{p+q}{2}}(\Gamma').
\end{align}
The main feature of this lift is that it has the Fourier expansion
\begin{align}
\int_{C} \Theta_{\KM}(z,\tau) = \int_Cc_0+ \sum_{n=1}^\8 (C,C_n)q^n
\end{align}
where the {\em special cycles}
\begin{align}
C_n \in Z^{\BM}_{pq-q}(\D_\Gamma;\Z)
\end{align} are locally finite cycles of codimension $q$ in $\D_{\Gamma}$, and $(C,C_n)$
is the intersection number in $\D_\Gamma$ between the two cycles $C$ and $C_n$.

In fact, Kudla and Millson constructed a more general lift between the cohomology of orthogonal locally symmetric spaces and Siegel modular forms, corresponding to the dual pair $\SO(p,q) \times \Sp_n(\R)$. They also provided an analogous lift for locally symmetric spaces attached to unitary groups. These lifts have become an important tool in arithmetic geometry, and there is an extensive literature on generalizations and applications by many authors, see for example \cite{fbtwolifts,bfinj,bmm,blmm,EY20,fmbound1,fmcoef,funkmil,G20,garcia23,KZ25,MZ26,Z24}.
Recently, Shi \cite{shi} has considered generalized special cycles on Hermitian locally symmetric manifolds, thus extending the work of Kudla and Millson to new dual pairs.
The aim of this paper is to construct an analogous theta lift on the symmetric space of $\SL_N(\R) $, whose Fourier coefficients are Poincaré duals of suitable cycles.

\subsection{Special cycles for $\SL_N(\R)$} 
We consider the symmetric spaces 
\begin{align}
X\coloneqq \GL_N(\R)^+/\SO(N), \qquad S\coloneqq \SL_N(\R)/\SO(N),
\end{align} where $\GL_N(\R)^+$ is the group of invertible matrices with positive determinant. The space $X$ can be identified with the space of real positive-definite symmetric invertible matrices, and $S \subset X$ as the subspace of matrices of determinant one. Let $V$ be the vector space $V \coloneqq \Q^N \oplus \Q^N$. Let $\langle-,-\rangle$ be the pairing defined by $\langle v,w\rangle \coloneqq v^tw$, let $B$ be the bilinear  form\begin{align}
B(\vbf_1,\vbf_2)=\langle v_1,w_2 \rangle +  \langle w_1,v_2 \rangle \qquad \vbf_i=(v_i,w_i)
\end{align} on $V$, and $Q(\vbf)=\frac{1}{2}B(\vbf,\vbf)$ the associated quadratic form of signature $(N,N)$. Throughout this paper, vectors in $V$ will be written $\vbf=(v,w)$ with $v,w \in \Q^N$. Let $\rho$ be the representation $\rho \colon \GL_N(\Q) \longrightarrow \SO(V,B)$ given by $\rho_g \vbf \coloneqq (gv,g^{-t}w)$. Let $L \subset \Z^N$ be a lattice and let $L^\ast \coloneqq \Hom(L,\Z) \supseteq L$ be its dual lattice with respect to the pairing. Let $V_\Z \coloneqq \Z^N \oplus \Z^N$ and
\begin{align}
\Lcal \coloneqq L \oplus L \subset V_\Z,
\end{align} whose dual with respect to the bilinear form $B$ is $\Lcal^\vee \coloneqq L^\ast\oplus L^\ast$. Let $D_\Lcal \coloneqq \Lcal^\vee/\Lcal$ be the discriminant of the lattice. A vector $\vbf_0 \in D_\Lcal$ defines a test function
\begin{align}
\id_{\vbf_0+\Lcal} \colon V \longrightarrow \Q,
\end{align}
and let $\chi \in \C[D_\Lcal]$ be a linear combination of such test functions. Let $\Gamma \subset \SL_N(\Z)$ be a torsion-free subgroup that preserves $\chi$, in the sense that $\chi(\rho_\gamma\vbf)=\chi(\vbf)$. (Note that the groups $\Gamma,\Gamma'$ that we will use from now on have nothing to do with the subgroups that appear in \ref{intro: kudla millson}).

We will be interested in the cohomology of the $(N^2+N-2)/2$ dimensional locally symmetric space  \begin{align}
S_{\Gamma} \coloneqq \Gamma \backslash S.
\end{align}
For each vector $\vbf \in V$ there is a natural submanifold $S_\vbf \subset S$ of codimension $N-1$, and is the image of an embedding
\begin{align}
\GL_{N-1}(\R)^+/\SO(N-1) \hooklongrightarrow S.
\end{align}
The projection $Z_{[\vbf]}$ of $S_\vbf$ in the quotient $S_{\Gamma}$ only depends on the class  of $[\vbf]$ in $\Gamma \backslash  \Lcal^\vee$. It defines a locally finite cycle
\begin{align}
Z_{[\vbf]} \in Z^{\BM}_{(N^2-N)/2}(S_\Gamma;\Z)
\end{align}
of codimension $N-1$ in the Borel-Moore homology. We consider the finite linear combination
\begin{align}
Z_n(\chi) \coloneqq \sum_{\substack{ [\vbf ] \in \Gamma \backslash  V \\ Q(\vbf)=n}} \chi(\vbf)Z_{[\vbf]} \in Z^{\BM}_{(N^2-N)/2}(S_\Gamma;\C).
\end{align}
\begin{rmk} The classical modular symbols $\{\alpha,\beta\}$ in $\HH$ are associated to two lines $\alpha, \beta \in \PP^1(\Q)$ and generate the relative homology of the modular curve $Y_0(p)$. 
These modular symbols can be generalized to $S$ in different ways. The modular symbols $Z_n(\chi)$ considered in this paper are associated to maximal parabolic subgroups. Similar modular symbols have been considered in previous work \cite{modsymb, schmidt}. On the other hand, Ash and Rudolph \cite{ashrud} consider generalized modular symbols $Z_Q$ associated to Borel subgroups and show that these modular symbols generate the relative homology of $S_\Gamma$.  We will compute the period of the theta lift on such modular symbols in \ref{evaluation on tori}.
\end{rmk}

\subsection{The Eisenstein class} Using the Mathai-Quillen formalism \cite{mq}, we construct a closed and $\GL_N(\R)^+$-invariant form $\varphi \in \Omega^N(X) \otimes \Scal(V_\R)$. We define the theta series
\begin{align}
\Theta_{\varphi, \chi}(z,\tau) \coloneqq j(h_\tau,i)^N  \sum_{\vbf \in V} \omega(h_\tau) \varphi(z,\vbf)\chi(\vbf) \in \Omega^N(X)^{\Gamma} \otimes C^\8(\HH),
\end{align}
where $h_\tau \in \SL_2(\R)$ is any matrix sending $i$ to $\tau \in \HH$ by Möbius transformations and $j$ is the standard automorphy factor. The theta series is $\Gamma$-invariant in the variable $z$, hence defines a form on $X_{\Gamma}=\Gamma \backslash X$. In $\tau$ it transforms like a modular form of weight $N$ for a congruence subgroup $\Gamma' \subset \SL_2(\Z)$ (that depends on $\chi$). 

\begin{rmk}(See \ref{relation kudla millson}.) This form $\varphi$ can also be obtained as the restriction of the Kudla-Millson form along the embedding $X \hooklongrightarrow \D$ induced by $\rho$, where $\D$ is the orthogonal locally symmetric space of signature $(N,N)$. The seesaw relating the two forms is
\[
\begin{tikzcd}
\SO(N,N)(\Q) \arrow[dash, dr] & \GL_2(\Q) \\
\GL_N(\Q) \arrow[dash]{u} \arrow[dash, ru] &\SL_2(\Q). \arrow[dash]{u}
\end{tikzcd}
\]

\end{rmk}

 Moreover, we have a diffeomorphism $X_\Gamma \simeq S_\Gamma \times \R_{>0}$, so that we can view $X_\Gamma$ as fiber bundle $p \colon X_\Gamma \longrightarrow S_\Gamma$ with fiber $\R_{>0}$. The integration along the fiber gives a form
\begin{align} \label{integral112}
E_{\varphi, \chi}(z_1,\tau,s) \coloneqq p_\ast \left ( \Theta_{\varphi, \chi}(z,\tau)u^{-s} \right )\in \Omega^{N-1}(S)^{\Gamma} \otimes C^\8(\HH),
\end{align}
where $z=(z_1,u)$ is the variable on $S \times \R_{>0}$ and we integrate over $u$. 

\begin{thm} \label{theoremA} The pushforward \eqref{integral112} converges for any $s \in \C$. At $s=0$ it defines a closed $(N-1)$-form $E_{\varphi, \chi}(z_1,\tau)$, which is invariant by $\Gamma \subset \SL_N(\Z)$ and transforms in $\tau$ like a modular form of weight $N$ and level $\Gamma' \subset \SL_2(\Z)$. In cohomology, it defines an element
\begin{align}
E_{\varphi, \chi}(z_1,\tau) = z(\chi)+\sum_{n=1}^\8 [Z_n(\chi)] q^n \in H^{N-1}(S_\Gamma;\C)\otimes \Mcal_N(\Gamma')
\end{align}
where $[Z_n(\chi)]$ denotes the Poincaré dual to $Z_n(\chi)$.
\end{thm}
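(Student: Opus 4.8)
The plan is to separate the statement into three assertions — absolute convergence of the pushforward for all $s$, closedness and modularity of the value at $s=0$, and the identification of the Fourier coefficients with Poincaré duals of the cycles $Z_n(\chi)$ — and treat each in turn. For convergence, I would first unfold $\Theta_{\varphi\chi}$ as a sum over $\vbf \in \Q^{2N}$ (supported on $\Lcal^\vee$) and analyze the integrand $\omega(h_\tau)\varphi(z,\vbf)u^{-s}$ fiberwise over $\R_{>0}$. Because $\varphi$ comes from the Mathai-Quillen construction, its Schwartz component decays rapidly in $\vbf$, and along the fiber direction the relevant behaviour as $u\to 0$ and $u\to\infty$ is governed by a Gaussian in the norm of $\vbf$ rescaled by $u$; the key point is that the factor $u^{-s}$ is integrable against this Gaussian near both ends of $\R_{>0}$, uniformly in compact sets of $z_1$ and $\tau$, and that the resulting sum over $\vbf$ still converges. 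This is where one must be careful with the contribution of the ``degenerate'' vectors (those with $v$ or $w$ equal to $0$, or with $B(\vbf)=0$); the hypothesis $\chi(0)=0$ removes the worst term, and the remaining $B(\vbf)=0$ terms must be shown to contribute a convergent (Eisenstein-type) piece. I expect this analysis to be the main technical obstacle, since it is the usual source of difficulty in geometric theta lifts and is what forces the introduction of the regularizing parameter $s$ in the first place.

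Next, for closedness at $s=0$: since $\varphi$ is closed on $X$ and $\GL_N(\R)^+$-invariant, $\Theta_{\varphi\chi}(z,\tau)$ is closed on $X_\Gamma$ for each fixed $\tau$. Integration along the fiber of $p\colon X_\Gamma \to S_\Gamma$ satisfies the projection formula $d\circ p_\ast = \pm\, p_\ast\circ d$ up to a boundary term coming from the non-compact fiber $\R_{>0}$. So I would compute $d E_{\varphi\chi}(z_1,\tau,s) = \pm\, p_\ast\bigl(d(\Theta_{\varphi\chi}u^{-s})\bigr)$ plus boundary contributions at $u=0$ and $u=\infty$; using $d\Theta_{\varphi\chi}=0$ this reduces to $\pm s\, p_\ast(\Theta_{\varphi\chi}u^{-s-1}\,du)$ together with the boundary terms. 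The factor of $s$ kills the first piece at $s=0$, and the convergence estimates from the first part show the boundary terms vanish (the integrand decays at both ends), so $E_{\varphi\chi}(z_1,\tau):=E_{\varphi\chi}(z_1,\tau,0)$ is closed. Its $\Gamma$-invariance is inherited from that of $\Theta_{\varphi\chi}$, and the weight-$N$ modularity in $\tau$ for $\Gamma'$ follows from the transformation law of the theta kernel under the Weil representation $\omega$ combined with the automorphy factor $j(h_\tau,i)^N$ — this is a standard consequence of Poisson summation, once one checks that $\chi$ transforms appropriately under $\Gamma'$.

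Finally, for the Fourier expansion: I would compute the $q$-expansion of $E_{\varphi\chi}(z_1,\tau)$ by grouping the sum over $\vbf$ according to the value $B(\vbf)=n\in\Z_{\ge 0}$ (only nonnegative $n$ appear, by positive-definiteness of $z$), so that $E_{\varphi\chi}(z_1,\tau)=\sum_{n\ge 0}\bigl(\sum_{B(\vbf)=n}\chi(\vbf)\,p_\ast(\varphi(z,\vbf)\cdot u^{0})\bigr)q^n$ after extracting the $e^{2\pi i n\tau}$ factor from $\omega(h_\tau)\varphi$. For $n>0$ the inner sum is a finite sum over $\Gamma\backslash\{\vbf : B(\vbf)=n\}$, and the main claim is that for a single $\vbf$ the fiber-integrated form $p_\ast\bigl(\varphi(z,\vbf)\bigr)$ represents the Thom class / Poincaré dual of the cycle $Z_\vbf \subset S_\Gamma$. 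This is the Mathai-Quillen mechanism: $\varphi(z,\vbf)$ is, up to exact forms, a Thom form of the normal bundle of $S_\vbf\hookrightarrow X$ pushed to a neighborhood of the zero locus, and integrating out the $\R_{>0}$ fiber converts the Thom form on $X$ into the Thom form on $S$ of the codimension-$(N-1)$ submanifold $S_\vbf$; summing over the $\Gamma$-orbit and weighting by $\chi$ gives $[Z_n(\chi)]$. For the constant term $n=0$ I would isolate the contribution, identify the $B(\vbf)=0$ terms, and show via the fiber integration of the Mathai-Quillen form that this is the transgression of the Euler class of the relevant torus bundle, yielding the class $z(\chi)$ — here the hypothesis $\chi(0)=0$ is essential to make the expression well-defined. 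The identification in the $n=0$ case is the second most delicate point, as it requires a careful analysis of which degenerate vectors contribute and how the transgression term arises from the fiber integral over $\R_{>0}$.
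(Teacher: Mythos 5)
Your overall architecture (convergence, then closedness and modularity via the projection formula, then identification of the Fourier coefficients via the Thom-form property of $\varphi$) matches the paper's, but there are two concrete gaps. First, the convergence argument as you state it fails for the singular vectors. For $\vbf=(v,0)$ with $v\neq 0$ the Gaussian factor of $\omega(h_\tau)\varphi(z,\vbf)$ is $\exp(-\pi u^{-2}\lVert g_1^{-1}v\rVert^2)$, which tends to a nonzero constant as $u\to\infty$, so $u^{-s}$ is \emph{not} termwise integrable against it on $[1,\infty)$ when $\re(s)\le 0$; symmetrically, vectors $(0,w)$ are not integrable near $u=0$. (The paper's Lemma \ref{uniformconvsing} makes this precise: termwise absolute convergence of the singular part holds only for $\re(s)>N$, resp.\ $\re(s)<-N$.) The hypothesis $\chi(0)=0$ does not cure this. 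What does cure it is Poisson summation applied to the \emph{whole} theta series, together with the fact --- a nontrivial property of the Mathai--Quillen Schwartz functions, established by explicitly computing their partial Fourier transforms in Proposition \ref{phisigma} --- that $\four_1(\varphi)(0)=\four_2(\varphi)(0)=0$. This is what makes $\Theta_{\varphi\chi}$ rapidly decreasing as $u\to 0$ and $u\to\infty$ and hence makes the pushforward converge for every $s$ (Propositions \ref{rapiddecrease} and \ref{rapiddecrease conv}). You invoke Poisson summation only for modularity; it is needed, along with the archimedean vanishing, already for convergence.

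Second, your claim that ``only nonnegative $n$ appear, by positive-definiteness of $z$'' is false. Positive-definiteness forces the \emph{cycle} $X_\vbf$ to be empty when $B(\vbf)\le 0$, but the terms $\chi(\vbf)\varphi^0(z,\sqrt{y}\vbf)q^{B(\vbf)}$ with $B(\vbf)<0$ do not vanish, so $E_{\varphi\chi}(z_1,\tau)$ has nonzero negative Fourier coefficients as a differential form. The theorem is a statement in cohomology, and one must show that these coefficients, together with the regular isotropic part of the constant term, are exact. The paper does this with the transgression form $\alpha^0$ satisfying $d\alpha^0(z,\sqrt{t}\vbf)=t\,\partial_t\varphi^0(z,\sqrt{t}\vbf)$, building a primitive $\xi^0=\int_y^\infty\alpha^0\,dt/t$ on the complement of $X_\vbf$ and proving that its sum over a regular $\Gamma$-orbit converges and is rapidly decreasing in $u$ (Proposition \ref{exactform}); that estimate genuinely uses regularity of the orbit and is not automatic. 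Your sketch has no mechanism for disposing of these terms. The remainder of your plan --- closedness via the boundary terms in $p_\ast\circ d$, the identification of the positive coefficients with Poincar\'e duals via the $t\to\infty$ limit of the rescaled Thom form, and the constant term as the transgressed Euler class of the torus bundle --- is in line with the paper.
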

The constant term $z(\chi) \in H^{N-1}(S_\Gamma)$ is a canonical Eisenstein class obtained by transgression of the Euler class, and for which explicit de Rham representatives have been constructed by Bergeron-Charollois-Garcia.

\subsection{Theta lifts} The Kudla-Millson theta correspondence has proved useful for deducing results regarding the span of the homology by special cycles. There are striking applications (amongst others) of these results due to Bergeron-Millson-Moeglin \cite{bmm} and Bergeron-Li-Millson-Moeglin \cite{blmm}. Similarly, it is natural to wonder\footnote{See \cite[Question~(a), p.~2]{mazur}} what part of the homology is generated by the modular symbols $Z_n(\chi)$.

The form $E_{\varphi, \chi}(z_1,\tau)$ can be used as a theta kernel to define two lifts. On the one hand, we have a lift
\begin{align} \label{lift117}
\quad E_{\varphi, \chi} \colon H_{N-1}(S_\Gamma;\Z)& \longrightarrow \Mcal_N(\Gamma'), \qquad E_{\varphi, \chi}(Z,\tau)\coloneqq \int_Z E_{\varphi, \chi}(z_1,\tau)
\end{align}
analogous to the Kudla-Millson lift \eqref{KMlift intro}. Note that by Theorem \ref{theoremA}, we have
\begin{align}
\int_Z E_{\varphi, \chi}(z_1,\tau)=\int_{Z} z(\chi)+(-1)^{\frac{N(N-1)^2}{2}}\sum_{n=1}^\8 (Z,Z_n(\chi))q^n,
\end{align}
where $(Z,Z_n(\chi))$ is the intersection pairing on $S_\Gamma$ between the two cycles $Z$ and $Z_n(\chi)$.
On the other hand, by taking the Petersson inner product of a cusp form with $E_{\varphi, \chi}(z_1,\tau)$ we also get an adjoint lift
\begin{align}
\Lambda_{\varphi, \chi} \colon \Scal_N(\Gamma') & \longrightarrow H^{N-1}(S_{\Gamma};\C).
\end{align}
There is a pairing
\begin{align} \label{perfect pairing intro}
 H_{N-1}(S_\Gamma;\C) \times H^{\BM}_{(N^2-N)/2}(S_\Gamma;\C) \longrightarrow \C.
\end{align}
The following theorem is analogous to  \cite[Theorem.~4.5]{kmcjm}.
\begin{thm} \label{theoremC} We have
\begin{align}
\ker(E_{\varphi, \chi}) \simeq \Span \{Z_n(\chi) \ \vert \ n \in \NN_{>0} \}^\perp \subset H_{N-1}(S_\Gamma;\C),
\end{align}
where the orthogonal complement is taken with respect to the pairing \eqref{perfect pairing intro}. Moreover, under Poincaré duality $H^{N-1}(S_\Gamma;\C) \simeq H^{\BM}_{(N^2-N)/2}(S_\Gamma;\C)$, we also have
\begin{align}
\image(\Lambda_{\varphi, \chi}) \subset \Span \{Z_n(\chi) \ \vert \ n \in \NN_{>0} \}.
\end{align}
\end{thm}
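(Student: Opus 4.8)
The plan is to deduce Theorem \ref{theoremC} from Theorem \ref{theoremA} in essentially the same way Kudla-Millson's Theorem 4.5 in \cite{kmcjm} follows from the geometric Fourier expansion of their lift. The starting point is that, by Theorem \ref{theoremA}, for a class $Z \in H_{N-1}(S_\Gamma;\C)$ the lift is
\begin{align*}
E_{\varphi\chi}(Z,\tau) = \langle Z, z(\chi)\rangle + (-1)^{\frac{N(N-1)^2}{2}} \sum_{n=1}^\infty \langle Z, Z_n(\chi)\rangle\, q^n,
\end{align*}
where $\langle\,\cdot\,,\,\cdot\,\rangle$ denotes the pairing \eqref{perfect pairing intro} between $H_{N-1}(S_\Gamma;\C)$ and $H^{\BM}_{(N^2-N)/2}(S_\Gamma;\C)$ (after applying Poincaré duality to turn the classes $[Z_n(\chi)]$ and $z(\chi)$ back into homology classes $Z_n(\chi)$, $Z_0(\chi)$). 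Thus $E_{\varphi\chi}(Z,\tau)=0$ as a modular form if and only if every Fourier coefficient vanishes, i.e. $\langle Z, Z_n(\chi)\rangle = 0$ for all $n\geq 0$. Since the pairing \eqref{perfect pairing intro} is perfect, this says exactly that $Z$ lies in the annihilator of the span of $\{Z_n(\chi) \mid n \in \NN\}$ inside $H_{N-1}(S_\Gamma;\C)$, which is the first assertion. Here one should be slightly careful about whether $n=0$ is included: the constant term $z(\chi)$ should be identified, via the discussion following Theorem \ref{theoremA}, with (the Poincaré dual of) a $0$-th special cycle $Z_0(\chi)$, or alternatively one checks that $z(\chi)$ lies in the span of the $Z_n(\chi)$ with $n\geq 1$ by a linear algebra argument on the space of modular forms of weight $N$; this is a point to verify but not a serious obstacle.

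For the second assertion, about $\image(\Lambda_{\varphi\chi})$, I would argue by adjointness between the two lifts with respect to the Petersson product and the pairing \eqref{perfect pairing intro}. Concretely, for a cusp form $f \in \Scal_N(\Gamma')$ and a class $Z \in H_{N-1}(S_\Gamma;\C)$, one has the identity
\begin{align*}
\langle Z, \Lambda_{\varphi\chi}(f)\rangle = \overline{\langle f, E_{\varphi\chi}(Z,\cdot)\rangle_{\mathrm{Pet}}},
\end{align*}
up to a normalizing constant, which follows from unwinding the definitions (the Petersson product of $f$ against the theta kernel, integrated against a cycle, versus the cycle integral of the kernel paired with $f$) and Fubini. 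Since $E_{\varphi\chi}(Z,\tau)$ has the geometric Fourier expansion above, its Petersson product against $f$ only sees the cuspidal projection and is computed by Rankin-Selberg / Hecke's formula in terms of the Fourier coefficients $\langle Z, Z_n(\chi)\rangle$; in particular $\langle Z, \Lambda_{\varphi\chi}(f)\rangle$ vanishes whenever $Z$ is orthogonal to all the $Z_n(\chi)$. By the perfectness of \eqref{perfect pairing intro} this forces $\Lambda_{\varphi\chi}(f)$, viewed in $H^{\BM}_{(N^2-N)/2}(S_\Gamma;\C)$ via Poincaré duality, to lie in the double annihilator of $\Span\{Z_n(\chi)\}$, which (again by perfectness, in finite dimensions) equals $\Span\{Z_n(\chi)\}$ itself. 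That gives the inclusion $\image(\Lambda_{\varphi\chi}) \subset \Span\{Z_n(\chi) \mid n\in\NN\}$.

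The main obstacle I anticipate is making the adjunction formula between $E_{\varphi\chi}$ and $\Lambda_{\varphi\chi}$ precise: one must justify interchanging the cycle integral over $Z$ with the Petersson integral over $\Gamma'\backslash\HH$, which requires controlling the growth of the theta kernel $E_{\varphi\chi}(z_1,\tau)$ both in the $\tau$-variable (to pair against the rapidly decaying cusp form $f$) and along the ends of the non-compact cycle $Z$ and of $S_\Gamma$. The absolute convergence statement in Theorem \ref{theoremA}, together with the fact that $f$ is cuspidal, should suffice, but the bookkeeping — especially reconciling the Borel-Moore/relative homology framework with the integrability needed for Fubini, and handling the non-holomorphic constant-term contribution to the Petersson product — is where the real work lies. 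A secondary, more cosmetic point is keeping track of the sign $(-1)^{N(N-1)^2/2}$ and the various normalizing constants ($\Lambda(s)$, $j(h_\tau,i)^N$, etc.) so that "orthogonal complement" is with respect to exactly the pairing \eqref{perfect pairing intro} and not a twist of it; none of these affect the statement since we only care about vanishing, but they should be tracked for consistency with the rest of the paper.
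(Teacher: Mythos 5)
Your proposal is correct and follows essentially the same route as the paper: the kernel statement is read off directly from the geometric Fourier expansion of Theorem \ref{theoremA}, and the image statement comes from the adjunction $[\omega,\Lambda_{\varphi\chi}(f)]=\langle E_{\varphi\chi}(\omega),f\rangle_{\pet}$ (justified by absolute convergence and Fubini) together with the double-annihilator identity $\Span\{Z_n(\chi)\}^{\perp\perp}=\Span\{Z_n(\chi)\}$. The Rankin--Selberg detour is unnecessary (once $Z\perp Z_n(\chi)$ for all $n$ the lift $E_{\varphi\chi}(Z,\tau)$ vanishes identically, so its Petersson product with $f$ is trivially zero), and your caveat about the $n=0$ term is a real ambiguity that the paper's own proof also leaves implicit.
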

 \begin{rmk} If $\chi$ is a test function such that $\chi(-\vbf)=(-1)^{N-1}\chi(\vbf)$, then the lift $E_{\varphi, \chi}$vanishes.
 \end{rmk}

\subsection{Evaluation on tori} \label{evaluation on tori}
 Ash and Rudolph \cite{ashrud} prove that the relative homology of the Borel-Serre compactification $\overline{S}_\Gamma$ is spanned by modular symbols 
\begin{align}
Z_Q \in Z^{\BM}_{N-1}(S_\Gamma,\Z),
\end{align} that are translates of a split torus by an invertible matrix $Q \in \Mat_N(\Z)$. They are associated to Borel subgroups, whereas the modular symbols $Z_n(\chi)$ are associated to maximal parabolic subgroups. Suppose that $\Lcal=nV_\Z$ for some positive integer $n$ (it $\Lcal$ always contains such a lattice, so without loss of generality we can assume that $\Lcal$ is of that form).

\begin{thm} \label{theoremD} The integral over a modular symbol $Z_Q$ converges and we have
\begin{align}
\int_{Z_Q} E_{\varphi, \chi}(z_1,\tau)& = \frac{(-1)^{N-1}i^N}{\pi^{N}n^{N}} \sum_{\vbf_0 \in V/\Lcal}\four_2(\chi)(Q\vbf_0) \prod_{m=1}^N E_1\left (\tau,\frac{a_m\tau+b_m}{n} \right ),
\end{align}
where $\vbf_0=(a_1, \dots,a_N,b_1,\dots,b_N)$, the modular forms $E_1(\tau,\lambda_0)$ are weight one Eisenstein series, and $\four_2(\chi)$ is discrete Fourier transform of $\chi$.
\end{thm}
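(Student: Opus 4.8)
\textbf{Proof proposal for Theorem \ref{theoremD}.}

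The plan is to combine the explicit Eisenstein-series formula of Theorem \ref{theoremB} with a direct parametrization of the modular symbol $Z_Q$ by a split torus, and then reorganize the resulting sum over $\Q^{2N}$ into a product of one-dimensional Eisenstein series. First I would recall that $Z_Q$ is, by the Ash--Rudolph construction, the image in $S_\Gamma$ of a diagonally embedded copy of the split torus obtained by pushing the standard maximal torus of $\SL_N(\R)$ by $Q$; concretely, after acting by $Q$ we may assume $Z_Q$ is parametrized by $g_1 = Q a$ where $a = \diag(t_1,\dots,t_N)$ runs over the diagonal torus (modulo the appropriate arithmetic quotient), and the invariant $(N-1)$-form to integrate is, by Theorem \ref{theoremB}, $\sum_\sigma E_{\varphi\chi}(Qa,\tau,s)_\sigma \otimes \iota_{u\partial/\partial u}\lambda(\sigma)$. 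On the diagonal torus the one-forms $\lambda_{ij}$ restrict to $\tfrac{1}{2}\,d\log(t_i/t_j)$ for $i\neq j$ and to $d\log t_i$ for $i=j$, so the wedge $\lambda(\sigma)$ restricted to the torus is nonzero only for the permutation-type terms; in fact the only $\sigma$ contributing to the integral over the $(N-1)$-dimensional torus is the identity (or a single relevant term), which collapses the sum over $\sigma$ to one clean term. This is the step where one pays for the "good for $\chi$" hypothesis: it is exactly what guarantees absolute convergence of the period (no pole at $s=0$) and lets one set $s=0$ before integrating.

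Next I would substitute the parametrization into $E_{\varphi\chi}(Qa,\tau,s)_\sigma$. Since $g_1^{-1} = a^{-1}Q^{-1}$ and $\four_2(\chi)(\vbf)$ is supported on $Q^{-1}\Lcal^\vee$ after the change of variables $\vbf \mapsto Q\vbf$ — producing the factor $\four_2(\chi)(Q\vbf_0)$ and a sum over $\vbf_0 \in \Q^{2N}/\Lcal$ of the residual lattice sum — the summand becomes, up to the constant $\Lambda(s)$, a product over the $N$ coordinates of terms of the shape $(\overline{a_m\tau + b_m})$-type numerators over $\lVert a^{-1}(v\tau+w)\rVert^{2N+s}$-type denominators. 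The point is that on the diagonal torus the norm $\lVert a^{-1}(v\tau+w)\rVert^2 = \sum_m t_m^{-2}\lvert v_m\tau + w_m\rvert^2$ does not literally factor, but after integrating $t_1,\dots,t_N$ against $d\log t_1\wedge\cdots$ (the torus measure coming from $\iota_{u\partial/\partial u}\lambda(\mathrm{id})$) one can use a Mellin/beta-integral identity — integrating each $t_m$ separately — to turn the single power of a sum into a product of $N$ one-variable integrals, each of which is recognized as (a completed) weight-one Eisenstein series $E_1(\tau,\lambda_0)$ with $\lambda_0 = (a_m\tau+b_m)/p$, picking up $\Gamma$-factors that conspire with $\Lambda(s)\big|_{s=0}$ to give the stated constant $\frac{(-1)^{N-1}i^N}{\pi^N p^N}$.

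The key steps in order: (1) write $Z_Q$ as the torus orbit $g_1 = Qa$ and restrict the forms $\lambda(\sigma)$ and $\iota_{u\partial/\partial u}$ to the torus, isolating the single surviving $\sigma$; (2) use the "good for $\chi$" hypothesis to justify convergence and evaluation at $s=0$, and change variables $\vbf \mapsto Q\vbf$ to extract $\four_2(\chi)(Q\vbf_0)$ and the sum over $\Q^{2N}/\Lcal$; (3) perform the $N$-fold torus integral by a repeated Mellin-transform computation, factoring the single homogeneous term $\lVert a^{-1}(v\tau+w)\rVert^{-2N-s}$ into a product of $N$ one-dimensional integrals via the gamma-integral $\|x\|^{-s} = \Gamma(s/2)^{-1}\int_0^\infty e^{-t\|x\|^2} t^{s/2}\tfrac{dt}{t}$ applied coordinate-wise; (4) identify each one-dimensional integral with $E_1(\tau,(a_m\tau+b_m)/p)$ and collect the constants.

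I expect the main obstacle to be step (3): the norm $\lVert a^{-1}(v\tau+w)\rVert^2$ is a \emph{sum} over coordinates, not a product, so factoring the period into a product of $E_1$'s is not formal. The trick will be to open the denominator as a Gaussian integral over an auxiliary variable $t$, which makes the integrand a genuine product over $m=1,\dots,N$ of Gaussians in $t_m$; then the torus integrals $\int t_m^{\bullet} e^{-t\, t_m^{-2}\lvert v_m\tau+w_m\rvert^2}\,\tfrac{dt_m}{t_m}$ separate, each reducing to a gamma function times a power of $\lvert v_m\tau + w_m\rvert$, and only \emph{after} this does one reassemble, do the final $t$-integral, and sum over the residual lattice to recognize the Eisenstein series. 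Keeping track of the numerators $(Qa)^{-1}(\overline{v\tau+w})_\sigma$ through this manipulation, and checking that the combinatorics of which $\sigma$ survives matches the power $d_m = \lvert\sigma^{-1}(m)\rvert$ appearing in the $E_1$'s (they should all be $d_m = 1$ for the relevant term, giving $\prod_m E_1$ with no higher weights), is the bookkeeping that needs the most care; everything else is Poisson summation (already invoked for Theorem \ref{theoremB}) plus the standard Kronecker-limit-type identification of the one-variable integral with a weight-one Eisenstein series.
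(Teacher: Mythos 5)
Your proposal matches the paper's proof essentially step for step: restrict to the split torus (where only $\sigma=\mathrm{id}$ survives), transport by $Q$ to turn $\four_2(\chi)$ into $Q^\ast\four_2(\chi)$, open the norm $\lVert t^{-1}(v\tau+w)\rVert^{-(2N+s)}$ as a Gaussian integral over the fiber variable so the combined torus integral factors into $N$ one-dimensional Gamma integrals, and identify each factor with $E_1(\tau,(a_m\tau+b_m)/p,s/N)$, with the goodness hypothesis guaranteeing $\lambda_0\notin\Z\tau+\Z$ and hence analytic continuation to $s=0$. One small correction: for diagonal $g$ the off-diagonal entries $\lambda_{ij}$ restrict to $0$ (not to $\tfrac{1}{2}d\log(t_i/t_j)$), which is precisely what kills every $\sigma$ other than the identity — this only makes your step (1) cleaner.
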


On the other hand, one can evaluate the lift on a torus attached to a totally real field $F$ of degree $N$. Let $\Ocal$ be its ring of integers. After fixing a $\Z$-basis $\Ocal \simeq \Z^N$,  an integral ideal $\m \subset \Ocal$ defines a lattice $L=\m$ in $\Z^N$. Its dual is $L^\ast=\m^{-1}\dfrak^{-1}$, and let $\chi_\m \in \C[D_{\Lcal}]$ be the test function
\begin{align}
\chi_\m=\id_{L \times L^\ast}=\id_{\m \times \m^{-1}\dfrak^{-1}}.
\end{align}
The choice of the basis induces an embedding $F^\times \hooklongrightarrow \GL_N(\Q)$ via the regular representation, which yields a compact cycle
\begin{align} \label{geodesic cycles}
Z_\Ocal = \Ocal^{\times,+} \backslash (F\otimes \R)^{1,+} \in Z_{N-1}(S_\Gamma;\Z).
\end{align}

\begin{thm} \label{theoremE} The evaluation of the lift  \eqref{lift117} on $Z_\Ocal$ is the diagonal restriction $\Escr_{\m}(\tau)$ of a Hilbert-Eisenstein series $\Escr_{\m}(\tau_1, \dots,\tau_N)$ of parallel weight one for a subgroup of $\SL_2(\Ocal)$. It has the Fourier expansion
\begin{align}
\Escr_{\m}(\tau)=\int_{Z_\Ocal} z(\chi_\m)+(-1)^{\frac{N(N-1)^2}{2}}\sum_{n=1}^\8 (Z_\Ocal,Z_n(\chi_\m) ) q^n.
\end{align}
\end{thm}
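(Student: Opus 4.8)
The plan is to evaluate the period $\int_{Z_\m}E_{\varphi\chi}(z_1,\tau)$ from the Eisenstein series expression of Theorem~\ref{theoremB}, which is valid for $\re(s)>N$, and then let $s\to0$. Note that the period itself needs no regularization: by Theorem~\ref{theoremA} the form $E_{\varphi\chi}(z_1,\tau)$ is smooth on $S$ and $Z_\m$ is compact, so $\int_{Z_\m}E_{\varphi\chi}(z_1,\tau)$ is the value at $s=0$ of $\int_{Z_\m}E_{\varphi\chi}(z_1,\tau,s)$; the point is to compute the latter through its $\re(s)>N$ formula and continue. First I would make the restriction of scalars explicit: fixing a $\Z$-basis of $\m$, multiplication in $F\otimes\R$ realizes $(F\otimes\R)^{1,+}$ as a subtorus of $\SL_N(\R)$, and the induced map descends to an identification $U(\m)^{1,+}\backslash(F\otimes\R)^{1,+}\cong Z_\m\subset S_\Gamma$. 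On this torus every tangent direction is diagonal in the chosen basis, so $\lambda_{ij}$ restricts to $0$ for $i\neq j$; inspecting the formula of Theorem~\ref{theoremB}, this forces $\sigma=\idrm$ to be the only term that survives restriction, and $\iotau\lambda(\idrm)$ restricts to a nonzero constant multiple of the translation-invariant volume form $d^\times t$ of $(F\otimes\R)^{1,+}$. Writing $t=(t_1,\dots,t_N)$ for the positive coordinates with $\prod_j t_j=1$ attached to the $N$ real places of $F$, one checks that on $Z_\m$ one has $\lVert g_1^{-1}(v\tau+w)\rVert^2=\sum_j t_j\,\lvert v^{(j)}\tau+w^{(j)}\rvert^2$ and that $(g_1^{-1}(\overline{v\tau+w}))_{\idrm}$ equals $\prod_j\overline{v^{(j)}\tau+w^{(j)}}$ up to a factor that is trivial because $\prod_j t_j=1$. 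Hence the period becomes, for $\re(s)>N$ and up to an explicit constant,
\[
\Lambda(s)\,y^{s/2}\int\limits_{U(\m)^{1,+}\backslash(F\otimes\R)^{1,+}}\ \sum_{\vbf}\four_2(\chi)(\vbf)\,\frac{\prod_j\overline{v^{(j)}\tau+w^{(j)}}}{\Bigl(\sum_j t_j\,\lvert v^{(j)}\tau+w^{(j)}\rvert^2\Bigr)^{N+s/2}}\,d^\times t,
\]
the sum running over the lattice in $F\oplus F$ on which $\four_2(\chi)$ is supported.

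Next I would unfold the unit group. Since the $(N-1)$-form of Theorem~\ref{theoremB} is $\Gamma$-invariant and $U(\m)^{1,+}\subset\Gamma$ acts on $(F\otimes\R)^{1,+}$ by translation, and since $\iotau\lambda(\idrm)$ restricts to a translation-invariant form, the scalar inner sum above is itself invariant under $t\mapsto\epsilon t$ for $\epsilon\in U(\m)^{1,+}$. Grouping the summation variables $\vbf$ into $U(\m)^{1,+}$-orbits — the units act freely since they are torsion-free — therefore collapses the compact integral into a sum over orbit representatives of integrals over the \emph{full} torus $(F\otimes\R)^{1,+}$. Each such integral is of Dirichlet type,
\[
\int\limits_{\prod_j t_j=1,\ t_j>0}\Bigl(\sum_j a_j t_j\Bigr)^{-N-s/2}\,d^\times t,\qquad a_j=\lvert v^{(j)}\tau+w^{(j)}\rvert^2,
\]
and is invariant under $a_j\mapsto\lambda_j a_j$ whenever $\prod_j\lambda_j=1$ while scaling by $r^{-N-s/2}$ under $a\mapsto ra$; it therefore depends only on $\prod_j a_j=\lvert\NFQ(v\tau+w)\rvert^2$ and equals $C(s)\,\lvert\NFQ(v\tau+w)\rvert^{-(2N+s)/N}$ for an explicit ratio of Gamma factors $C(s)$. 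Multiplying by the numerator $\prod_j\overline{v^{(j)}\tau+w^{(j)}}=\overline{\NFQ(v\tau+w)}$, each orbit contributes $C(s)\,\NFQ(v\tau+w)^{-1}\,\lvert\NFQ(v\tau+w)\rvert^{-s/N}$, a holomorphic parallel-weight-one summand times a Hecke regulator. (The degenerate orbits with $v=0$ are handled the same way and supply the $\tau$-independent part of the Eisenstein series.)

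Reassembling, the period equals for $\re(s)>N$, up to a constant,
\[
y^{s/2}\sum_{[\vbf]}\four_2(\chi)(\vbf)\,\lvert\NFQ(v\tau+w)\rvert^{-s/N}\prod_{j=1}^N\frac{1}{v^{(j)}\tau+w^{(j)}},
\]
the sum over $U(\m)^{1,+}$-orbits in the lattice supporting $\four_2(\chi)$. Writing $y^{s/2}=\prod_{j=1}^N y^{s/(2N)}$ and $\lvert\NFQ(v\tau+w)\rvert^{-s/N}=\prod_{j=1}^N\lvert v^{(j)}\tau+w^{(j)}\rvert^{-s/N}$, one recognizes this as the value at $\tau_1=\dots=\tau_N=\tau$ of the Hecke-regularized Hilbert--Eisenstein series of parallel weight one
\[
\Escr_\chi(\tau_1,\dots,\tau_N;s)\coloneqq\sum_{[\vbf]}\four_2(\chi)(\vbf)\prod_{j=1}^N\frac{(\im\tau_j)^{s/(2N)}}{(v^{(j)}\tau_j+w^{(j)})\,\lvert v^{(j)}\tau_j+w^{(j)}\rvert^{s/N}}
\]
for the congruence subgroup of $\SL_2(\Ocal)$ cut out by $\m$ and by the factorization $\chi=\chi_1\chi_2$. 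Letting $s\to0$ (Hecke's trick) gives $\Escr_\chi(\tau_1,\dots,\tau_N)$, whose diagonal restriction is $\Escr_\chi(\tau)$; the normalizations $\Lambda(s)$, $C(s)$ and the volume-form constant are precisely what make the residual scalar equal to $1$ at $s=0$. Finally, the asserted Fourier expansion is immediate from Theorem~\ref{theoremA}: pairing $E_{\varphi\chi}(z_1,\tau)=z(\chi)+\sum_{n\geq1}[Z_n(\chi)]q^n$ with the class $[Z_\m]\in H_{N-1}(S_\Gamma;\C)$ turns the constant term into $\int_{Z_\m}z(\chi)$ and the $q^n$-coefficient into $(-1)^{N(N-1)^2/2}[Z_\m,Z_n(\chi)]$.

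The main obstacle is analytic rather than structural. One must justify, for $\re(s)>N$, interchanging the only conditionally convergent (after Hecke regularization) lattice sum with the torus integral and with the unfolding, and then carry the analytic continuation in $s$ all the way to $s=0$, verifying that the continuation of the period and that of the Hilbert--Eisenstein series agree there — which is why the full regulator $\Lambda(s)\,y^{s/2}\lVert\,\cdot\,\rVert^{2N+s}$ must be kept through every step rather than set to $s=0$ prematurely. A secondary, purely bookkeeping, issue is to pin down the exact congruence subgroup of $\SL_2(\Ocal)$ and the character data of $\Escr_\chi$ in terms of $\m$ and of $\chi=\chi_1\chi_2$, and to track all archimedean constants so that the identity holds on the nose.
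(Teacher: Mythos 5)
Your proposal follows essentially the same route as the paper: restrict the $\re(s)>N$ Eisenstein expression of Theorem \ref{theoremB} to the split torus coming from $(F\otimes\R)^{1,+}$, observe that only $\sigma=\idrm$ survives because $\lambda$ restricts to a diagonal form, unfold $U(\m)^{1,+}$ against the compact quotient, evaluate the resulting Dirichlet-type integral over the norm-one torus so that $\lVert\cdot\rVert^{2N+s}$ collapses to $\lvert\NFQ(\cdot)\rvert^{(2N+s)/N}$, recognize the diagonal restriction of a parallel-weight-one Hilbert--Eisenstein series, continue to $s=0$, and read the Fourier expansion off Theorem \ref{theoremA}. The only cosmetic difference is that the paper computes the torus integral by the Gaussian/Gamma-function substitution rather than by your homogeneity argument; both give the same Gamma-factor constant.
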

The constant term is a linear combination of partial zeta functions at $s=0$, and can be interpreted as a linking number in torus bundles (see \cite{bergeronsmf} for the case of a real quadratic field). 

\begin{rmk} The construction of the Eisenstein class in this paper uses the Mathai-Quillen formalism and shares some similarities with the construction of Bergeron-Charollois-Garcia in \cite{bcg,bcgcrm}.  In \cite{bcg}, inspired by the existence of a similar cocycle due to Charollois-Sczech \cite{cs2016} and the construction by Garcia \cite{garcia} of differential forms via superconnections, the authors construct a lift
\begin{align} \label{liftbcgintro}
E_{BCG} \colon H_{N-1}(S_\Gamma) \longrightarrow \Mcal_N(\Gamma'),
\end{align}
for some $\Gamma' \subset \SL_2(\Z)$. It follows from Theorem \ref{theoremD} and \cite[Theorem.~6]{bcg} that both lifts \eqref{lift117} and \eqref{liftbcgintro} are diagonal restrictions of Hilbert-Eisenstein series, when evaluated on cycles attached to totally real field. On the other hand, the computations in \cite{bcgcrm} show that the periods along modular symbols are products of weight one Eisenstein series (see \cite[Théorème.~2.10]{bcgcrm} for example). Hence, although it is not completely immediate by comparing the differential forms, this strongly suggests that the two lifts agree. To get a more precise result one should show that the form $E_{\psi,x}^{(N-1,0,N,0)}$ in \cite[Lemma.~28]{bcg} is exactly the form $E_{\varphi, \chi}$ considered in this paper, in the case where $\chi$ is the characteristic function of a non-zero torsion point $x$. Finally, in the case where $S_\Gamma=Y_0(p)$ (see below), the homology $H_1(Y_0(p),\C)$ is generated by cycles $Z_\Ocal$ as above, which are closed geodesics attached to real quadratic fields. In that case, it follows that the lift in this paper agrees with the lift of Bergeron-Charollois-Garcia.
\end{rmk}

\subsection{The case of the modular curve} \label{intro modular curve} In the special case where $N=2$, the symmetric space $S$ is the upper half-plane $\HH$. Let $p$ be a prime and let $\Lcal=pV_\Z$. For a suitable test function $\chi$ we can take $\Gamma=\Gamma'=\Gamma_0(p)$ so that $S_\Gamma = Y_0(p)= \Gamma_0(p) \backslash \HH$. Let $f_1, \dots,f_r$ be a basis of normalized newforms of $\Scal_2(\Gamma_0(p))$, and $E_2^{(p)}$ is the normalized Eisenstein series needed to span $\Mcal_2(\Gamma_0(p))$.
We define $\omega_E \coloneqq E_2^{(p)}(\tau)d\tau$ and $\omega^{\pm}_f \coloneqq \omega_{f} \pm \overline{\omega_{f}}$ where $\omega_{f}=f(\tau)d\tau$.

The theta lift specializes to
\begin{align} \label{lift118}
E_{\varphi, \chi} \colon H_{1}(Y_0(p);\C)& \longrightarrow \Mcal_2(\Gamma_0(p)).
\end{align}
We will show that the special cycles 
\begin{align}
Z_n(\chi)=T_n\{0,\8\}
\end{align} are the Hecke translates of the modular symbol $\{0,\8\}$ from $0$ to $\8$, and we deduce the following from Theorem \ref{theoremA}.
\begin{cor} For $Z \in H_1(Y_0(p);\C)$, the weight $2$ modular form $E_{\varphi, \chi}(C)$ admits the Fourier expansion
\begin{align}
E_{\varphi, \chi}(Z,\tau) = \frac{24}{p-1} \left ( \int_Z \omega_E \right )-\sum_{n=1}^\8 (Z,T_n\{0,\8\})q^n.
\end{align}
\end{cor}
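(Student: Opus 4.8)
The plan is to specialize Theorem \ref{theoremA} to the case $N=2$, $\Lcal = p\Z^4$, and to identify the three ingredients of the Fourier expansion — the constant term $z(\chi)$, the special cycles $Z_n(\chi)$, and the normalization of $q$-coefficients — with the objects in the statement. First I would recall the setup of \ref{intro modular curve}: with a suitable $\chi \in \C[D_\Lcal]$ split as $\chi_1(v)\chi_2(w)$ one arranges $\Gamma = \Gamma' = \Gamma_0(p)$, so that $S_\Gamma = Y_0(p)$ and $\Mcal_2(\Gamma') = \Mcal_2(\Gamma_0(p))$. Theorem \ref{theoremA} then gives directly
\begin{align}
\int_Z E_{\varphi\chi}(z_1,\tau) = \int_Z z(\chi) + (-1)^{\frac{N(N-1)^2}{2}}\sum_{n=1}^\8 [Z,Z_n(\chi)]\,q^n,
\end{align}
and with $N=2$ the sign $(-1)^{N(N-1)^2/2} = (-1)^1 = -1$ already matches the minus sign in the corollary. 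So the real content is (i) computing the constant term and (ii) identifying $Z_n(\chi)$ with $T_n\{0,\8\}$.

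For the constant term, I would use that $z(\chi) \in H^1(Y_0(p);\C)$ is the transgression of the Euler class of the $\R_{>0}$-bundle $X_\Gamma \to S_\Gamma$, and that for $N=2$ this is the Eisenstein class on the modular curve, whose explicit De Rham representative is (a multiple of) the weight-$2$ Eisenstein series $E_2^{(p)}(\tau)\,d\tau$ needed to span $\Mcal_2(\Gamma_0(p))$. The pairing $\int_Z \omega_E$ therefore recovers the period appearing in the corollary; the only task is to pin down the constant $\tfrac{24}{p-1}$. This I would extract either from the normalization of the Mathai–Quillen / Euler form used to build $\varphi$ (tracking the factor $\Lambda(s)$ at $s=0$ from Theorem \ref{theoremB}, together with the residue of the relevant Eisenstein series $E_1$), or more cheaply by a consistency check: evaluate $E_{\varphi\chi}$ on a single explicit cycle — e.g. a closed geodesic $Z_\m$ attached to a real quadratic field as in Theorem \ref{theoremE}, where the constant term is a known combination of partial zeta values at $s=0$ — and solve for the overall scalar. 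The value $\zeta^{(p)}(0)$-type computation for $\Gamma_0(p)$ produces exactly $\tfrac{24}{p-1}$ (this is the familiar normalization making $E_2^{(p)} = \tfrac{1}{p-1}(p E_2(p\tau) - E_2(\tau))$ have constant term $1$ after rescaling), so this is a finite bookkeeping exercise rather than a new argument.

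For the special cycles, the key point is that for $\vbf = (v,w) \in \Q^4$ with $B(\vbf) = v^t w = n$, the codimension-$1$ cycle $S_\vbf \subset S = \HH$ is a point (since $\GL_{N-1}(\R)^+/\SO(N-1)$ is a point for $N=2$), namely the point $-w/v \in \HH \cup \PP^1(\Q)$ up to the $\Gamma$-action, and more precisely $S_\vbf$ is the geodesic or the cusp pair cut out by the rank-one condition. Summing $\chi(\vbf)Z_\vbf$ over $[\vbf] \in \Gamma\backslash\Q^4$ with $B(\vbf) = n$, and using that $\chi$ is supported on $\Lcal^\vee = p^{-1}\Z^4$ and $\Gamma_0(p)$-invariant, one recognizes the set of relevant classes as being indexed by the $\Gamma_0(p)$-orbits of matrices of determinant $n$ acting on the pair of cusps $\{0,\8\}$ — which is precisely the definition of the Hecke operator $T_n$ applied to the modular symbol $\{0,\8\}$. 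Concretely, $\{0,\8\}$ is the relative $1$-cycle from the cusp $0$ to the cusp $\8$, and $T_n\{0,\8\} = \sum_{\delta} \delta\{0,\8\}$ over the standard coset representatives $\delta$ of $\Gamma_0(p)\backslash\{M \in \Mat_2(\Z): \det M = n,\ \gcd(n,p)\text{-condition}\}$; matching these representatives with the pairs $(v,w)$ (where $v,w$ encode the two cusps $-w/v$, $\infty$) gives $Z_n(\chi) = T_n\{0,\8\}$ with the stated multiplicities, using $\chi(0)=0$ to discard the degenerate term. The main obstacle is this last identification — getting the Hecke coset bookkeeping exactly right, including checking that the test function $\chi$ is chosen so that the multiplicities are all $1$ (not merely up to the Euler–Hurwitz factors that show up for general level), and that the orientation/sign conventions on $Z_\vbf$ are compatible with the $-1$ already produced by the $(-1)^{N(N-1)^2/2}$ factor so that no extra sign appears. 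Everything else is a direct specialization.
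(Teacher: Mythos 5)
Your identification $Z_n(\chi)=T_n\{0,\8\}$ follows the same route as the paper (Proposition \ref{cycleN2}): compute $S_\vbf$ explicitly, recognize the $\Gamma_0(p)$-orbit representatives of $\{\vbf : B(\vbf)=n\}$ as the Hecke cosets, and sum. One slip there: for $N=2$ the cycle $S_\vbf$ is \emph{not} a point --- $\GL_{N-1}(\R)^+/\SO(N-1)=\GL_1(\R)^+\simeq\R_{>0}$ is one-dimensional, and $S_\vbf$ is the geodesic with endpoints $w_2/w_1$ and $-v_1/v_2$ (codimension $N-1=1$ in the two-dimensional $\HH$). You recover from this in the next clause, but the dimension count should be stated correctly since it is exactly what makes the matching with the relative $1$-cycles $T_n\{0,\8\}$ possible. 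The sign bookkeeping $(-1)^{N(N-1)^2/2}=-1$ is right.

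The constant term is where your proposal has a genuine gap, and it is also where you diverge from the paper. You assert that $z(\chi)$ is a multiple of $[\omega_E]$ and propose to find the scalar either by tracking the Mathai--Quillen normalization or by a ``consistency check'' against Theorem \ref{theoremE}. Neither step is carried out, and both are problematic as stated: (a) the proportionality $z(\chi)\propto[\omega_E]$ in $H^1(Y_0(p);\C)$ (which is far from one-dimensional) is not automatic and is nowhere established in your argument; (b) the consistency check is essentially circular, since the constant term in Theorem \ref{theoremE} is itself $\int_{Z_\m}z(\chi)$ --- to use it you would have to independently evaluate the partial zeta values at $s=0$, which is a substantial computation you have not done. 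The paper avoids computing $z(\chi)$ altogether: it decomposes the winding element $G_w$ (the class with $[\omega,G_w]=\int_0^\8\omega$) in the basis $\{\omega_E,\omega_{f_i}^{\pm}\}$, finding $G_w=\tfrac{24}{p-1}\omega_E-\sum_i\tfrac{L(f_i,1)}{i\pi\lVert f_i\rVert^2}\omega_{f_i}^+$ from the pairings with the parabolic class and the newforms. Since the $n$-th Fourier coefficient ($n\geq1$) is $-[Z,T_nG_w]$, the positive coefficients of $E_{\varphi\chi}(Z,\tau)$ agree with those of $\tfrac{24}{p-1}(\int_Z\omega_E)E_2^{(p)}-\sum_i(\cdots)f_i$, and two holomorphic weight-$2$ forms on $\Gamma_0(p)$ with the same positive coefficients are equal; the constant term $\tfrac{24}{p-1}(\int_Z\omega_E)\cdot a_0(E_2^{(p)})\cdot\tfrac{24}{p-1}$-normalization then drops out for free. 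This is what the remark following the corollary means by ``comparing the constant term in the spectral expansion.'' To complete your proof you would either need to adopt this argument or actually prove $z(\chi)=\tfrac{24}{p-1}[\omega_E]$, e.g.\ by identifying the $N=2$ Bergeron--Charollois--Garcia form $E_{\Psi,v_0}$ with an explicit weight-$2$ Eisenstein series.
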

\begin{rmk} The constant term is obtained by comparing the constant term in the spectral expansion below.
\end{rmk}
In particular, if we combine it with Theorem \ref{theoremE} we recover a result of Darmon-Pozzi-Vonk\cite[Theorem.~A]{dpv} that expresses the diagonal restriction of Hilbert-Eisenstein series as generating series of intersection numbers. Furthermore, following \cite{dpv} we deduce a spectral decomposition of the theta lift. 

\begin{cor} \label{corollaryH} For $Z \in H_1(Y_0(p);\C)$, the modular form $E_{\varphi, \chi}(Z)$ admits the spectral expansion
\begin{align}
    E_{\varphi, \chi}(Z) = \frac{24}{p-1} \left ( \int_Z \omega_E \right ) E_2^{(p)}- \sum_{i=1}^r \frac{L(f_i,1)}{i\pi\lVert f_i \rVert^2}\left ( \int_Z\omega_{f_i}^+\right) f_i.
\end{align}
Let $\Mcal_2^{0}(\Gamma_0(p))\subset \Mcal_2(\Gamma_0(p))$ be the subspace spanned by the eigenforms $E_2^{(p)}$, and $f_i$ for which $L(f_i,1) \neq 0$. The lift \eqref{lift118} is a surjective map
\begin{align}
E_{\varphi, \chi} \colon H_{1}(Y_0(p);\C)& \longrightarrow \Mcal^0_2(\Gamma_0(p)).
\end{align}
Moreover, the lift is Hecke equivariant in the sense that $E_{\varphi, \chi}(T_nZ)=T_nE_{\varphi, \chi}(Z)$ when $(n,p)=1$.
\end{cor}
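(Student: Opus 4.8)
The plan is to decompose $E_{\varphi\chi}(Z)$ in the eigenbasis of $\Mcal_2(\Gamma_0(p))$, read the Eisenstein coefficient off from Corollary~\ref{corollaryG}, compute the cuspidal coefficients by a Rankin--Selberg unfolding, and then deduce surjectivity and Hecke equivariance as formal consequences. For the reduction: since $p$ is prime, $\Scal_2(\Gamma_0(p))$ consists only of newforms and the Eisenstein subspace is $\C E_2^{(p)}$, so $\Mcal_2(\Gamma_0(p))=\C E_2^{(p)}\oplus\bigoplus_i\C f_i$ with the $f_i$ pairwise orthogonal and $E_2^{(p)}\perp\Scal_2(\Gamma_0(p))$. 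Writing $E_{\varphi\chi}(Z)=\alpha(Z)E_2^{(p)}+\sum_i a_i(Z)f_i$, orthogonality gives $a_i(Z)=\langle E_{\varphi\chi}(Z),f_i\rangle/\lVert f_i\rVert^{2}$, while $\alpha(Z)$ is read off from the constant term of $E_{\varphi\chi}(Z,\tau)$ at the cusp $\8$, which by Corollary~\ref{corollaryG} equals $\frac{24}{p-1}\int_Z\omega_E$ (with the normalisation of $E_2^{(p)}$ used there).

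For the cuspidal coefficients: by Theorems~\ref{theoremA} and~\ref{theoremB}, $E_{\varphi\chi}(Z,\tau)$ is the value at $s=0$ of $\int_Z E_{\varphi\chi}(z_1,\tau,s)$, whose integrand for $\re(s)>2$ is the explicit Eisenstein series of Theorem~\ref{theoremB}. In that range, absolute convergence lets one exchange the Petersson integral in $\tau$ with the integral over $Z$:
\[
\langle E_{\varphi\chi}(Z,\cdot,s),f_i\rangle=\int_Z\big\langle E_{\varphi\chi}(z_1,\cdot,s),f_i\big\rangle_\tau ,
\]
and the inner Petersson product of this weight-$2$ Eisenstein series against the newform $f_i$ is evaluated by the standard Rankin--Selberg unfolding, producing $L(f_i,1)$ (at $s=0$) times an explicit archimedean factor and a closed $1$-form in $z_1$; the latter is $\Lambda_{\varphi\chi}(f_i)$. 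One then identifies $\Lambda_{\varphi\chi}(f_i)=\kappa_i\,\omega_{f_i}^{+}$ in $H^1(Y_0(p);\C)$: it lies in the $f_i$-isotypic subspace because the theta kernel is Hecke-equivariant, and it is the $\omega^{+}$-part rather than $\omega^{-}$ because $\chi$ has the parity $\chi(-\vbf)=\chi(\vbf)$ forced by the Remark following Theorem~\ref{theoremC}, matching the $\tau\mapsto-\overline\tau$ symmetry of $E_{\varphi\chi}(z_1,\tau)$. Tracking the powers of $2\pi i$ yields $\kappa_i=-\tfrac{L(f_i,1)}{i\pi}$, hence $a_i(Z)=-\tfrac{L(f_i,1)}{i\pi\lVert f_i\rVert^{2}}\int_Z\omega_{f_i}^{+}$, which is the claimed spectral expansion. (Equivalently, one may extract the $f_i$-isotypic component of the Hecke-linear generating series $\sum_n[Z,T_n\{0,\8\}]q^n$ of Corollary~\ref{corollaryG} using the Mellin/period identity $\int_{\{0,\8\}}\omega_{f_i}=\tfrac{i}{2\pi}L(f_i,1)$, exactly as in \cite{dpv}.)

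For surjectivity: the spectral expansion shows $\image(E_{\varphi\chi})\subseteq\Mcal_2^{0}(\Gamma_0(p))$. Conversely, the holomorphic differentials $\omega_{f_i}$, the anti-holomorphic differentials $\overline{\omega_{f_i}}$, and the differential of the third kind $\omega_E$ together form a basis of $H^1_{\mathrm{dR}}(Y_0(p);\C)$ ($2r+1=2g+1$ classes), and the integration pairing $H_1(Y_0(p);\C)\times H^1_{\mathrm{dR}}(Y_0(p);\C)\to\C$ is perfect; hence the functionals $Z\mapsto\int_Z\omega_E$ and $Z\mapsto\int_Z\omega_{f_i}^{+}=\int_Z\omega_{f_i}+\int_Z\overline{\omega_{f_i}}$ are linearly independent on $H_1(Y_0(p);\C)$, so the map $Z\mapsto\big(\int_Z\omega_E,(\int_Z\omega_{f_i}^{+})_i\big)$ is onto $\C^{1+r}$, and $\image(E_{\varphi\chi})=\Mcal_2^{0}(\Gamma_0(p))$. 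For Hecke equivariance, for $(n,p)=1$ the operator $T_n$ is self-adjoint for the intersection pairing, so $\int_{T_nZ}\omega_E=\sigma_1(n)\int_Z\omega_E$ and $\int_{T_nZ}\omega_{f_i}^{+}=a_n(f_i)\int_Z\omega_{f_i}^{+}$ (using that $E_2^{(p)}$ has $T_n$-eigenvalue $\sigma_1(n)$ and that $\omega_{f_i}^{+}$ is a $T_n$-eigenvector with the real eigenvalue $a_n(f_i)$); substituting into the spectral expansion and comparing with the action of $T_n$ on $\Mcal_2(\Gamma_0(p))$ gives $E_{\varphi\chi}(T_nZ)=T_nE_{\varphi\chi}(Z)$.

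The main obstacle is the cuspidal computation: pinning down $\Lambda_{\varphi\chi}(f_i)$ as the precise multiple $-\tfrac{L(f_i,1)}{i\pi}\,\omega_{f_i}^{+}$, i.e. carrying out the Rankin--Selberg unfolding of the Eisenstein series of Theorem~\ref{theoremB} against $f_i$ and getting the parity ($\omega^{+}$ versus $\omega^{-}$) and all normalising constants correct, following \cite{dpv}. Once the spectral expansion is in hand, the remaining assertions are essentially formal.
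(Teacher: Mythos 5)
Your primary route for the cuspidal coefficients (Petersson-pairing the Eisenstein series of Theorem \ref{theoremB} against $f_i$ and unfolding) is not the paper's argument, and as written it has a genuine gap. The paper does exactly what you relegate to your parenthetical ``equivalently'' remark: following \cite{dpv}, it writes the Poincar\'e dual $G_w$ of the winding element $\{0,\8\}$ in the basis $\omega_E,\omega_{f_i}^{\pm}$, computes the coefficients from $[\omega_{f_i}^-,G_w]=-L(f_i,1)/(i\pi)$, $[\omega_{f_i}^+,G_w]=0$ and $[\omega_0,\omega_E]=a_0(E_2^{(p)})$, and then reads off the $n$-th Fourier coefficient of $E_{\varphi\chi}(Z)$ as $[Z,T_nG_w]$ using self-adjointness of $T_n$ and the reality of the $a_n(f_i)$. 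Given Corollary \ref{corollaryG}, this is essentially formal linear algebra plus the period identity you quote; no Rankin--Selberg unfolding is needed.

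The gaps in your main route are concrete. First, the decisive constant $\kappa_i=-L(f_i,1)/(i\pi)$ is asserted via ``tracking the powers of $2\pi i$'' but never computed; this is precisely the hard part of that approach. Second, your justification for why $\Lambda_{\varphi\chi}(f_i)$ is a multiple of $\omega_{f_i}^{+}$ rather than $\omega_{f_i}^{-}$ --- the parity $\chi(-\vbf)=\chi(\vbf)$ --- is not the right mechanism: the symmetry $\vbf\mapsto-\vbf$ acts trivially on the cycles when $N=2$ (since $S_{-\vbf}=(-1)^NS_\vbf$) and does not distinguish the two eigenspaces of complex conjugation on $H^1$. The actual reason the $\omega^{+}$-periods appear is that $\int_{\{0,\8\}}\omega_{f_i}^{+}=0$ because $f_i(iy)$ is real (real Fourier coefficients of a newform), which forces the $\omega^{-}$-component of $G_w$ to vanish. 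Third, invoking Hecke equivariance of the theta kernel to place $\Lambda_{\varphi\chi}(f_i)$ in the $f_i$-isotypic subspace is circular here, since equivariance is one of the assertions being proved; it would have to be established separately from the $\Delta_0(p)^{(n)}$-invariance of $\chi$. Your treatment of surjectivity (linear independence of the functionals $\int_Z\omega_E$, $\int_Z\omega_{f_i}^{+}$ on $H_1(Y_0(p);\C)$, which has dimension $2g+1$) and of Hecke equivariance via self-adjointness of $T_n$ for $(n,p)=1$ is correct and matches the paper, which instead exhibits explicit preimages $E_{\varphi\chi}(\omega_E)=E_2^{(p)}$ and $E_{\varphi\chi}(\omega_{f_i}^-)=\tfrac{2L(f_i,1)}{i\pi}f_i$.
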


The homology of $H_{1}(Y_0(p);\C)$ is generated by cycles $Z_\Ocal$ as in \eqref{geodesic cycles}, which are closed geodesics attached to real quadratic fields. We deduce the following from Theorem \ref{theoremE} and Corollary \ref{corollaryH}.
\begin{cor} \label{corollaryG} The space $\Mcal_2^{0}(\Gamma_0(p))$ is spanned by diagonal restrictions of Eisenstein series.
\end{cor}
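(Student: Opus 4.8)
The statement is a formal consequence of Corollary~\ref{corollaryH}, Theorem~\ref{theoremE}, and the fact recalled just above that $H_1(Y_0(p);\C)$ is spanned by the closed geodesics $Z_\m$ of \eqref{geodesic cycles} attached to real quadratic fields. The plan is: first, by Corollary~\ref{corollaryH} the lift \eqref{lift118} is surjective, so $\Mcal_2^0(\Gamma_0(p))=\image(E_{\varphi\chi})$; second, since $E_{\varphi\chi}$ is $\C$-linear and its source is spanned by the cycles $Z_\m$, its image is spanned by the classes $E_{\varphi\chi}(Z_\m)$; third, by Theorem~\ref{theoremE} with $N=2$ each $E_{\varphi\chi}(Z_\m)$ is the diagonal restriction $\Escr_\chi(\tau)$ of a Hilbert--Eisenstein series $\Escr_\chi(\tau_1,\tau_2)$ of parallel weight one for a subgroup of $\SL_2(\Ocal)$, where $\Ocal$ is the ring of integers of the relevant real quadratic field. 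Combining these,
\[
\Mcal_2^0(\Gamma_0(p)) = \image(E_{\varphi\chi}) = \Span\bigl\{\,E_{\varphi\chi}(Z_\m)\,\bigr\} = \Span\bigl\{\,\Escr_\chi\,\bigr\},
\]
which is spanned by diagonal restrictions of Eisenstein series, as claimed. Beyond the inputs listed, no estimates or Fourier computations are required.

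I would carry out the three steps in that order. One point worth recording is that no pole obstructs the last step: the cycles $Z_\m$ are compact, so by Theorem~\ref{theoremA} the period $\int_{Z_\m}E_{\varphi\chi}(z_1,\tau)$ is just the integral of a smooth closed form and the hypotheses of Theorem~\ref{theoremE} are satisfied, in contrast with the non-compact Ash--Rudolph symbols of Theorem~\ref{theoremD} where a pole at $s=0$ can appear.

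The only genuine issue --- bookkeeping rather than a new idea, and the step I expect to be the main obstacle --- is to reconcile the arithmetic data of Corollary~\ref{corollaryH} with that of Theorem~\ref{theoremE}. Corollary~\ref{corollaryH} fixes the lattice $\Lcal=p\Z^4$ and a test function $\chi\in\C[D_\Lcal]$ for which $\Gamma=\Gamma'=\Gamma_0(p)$, while in \eqref{geodesic cycles} and Theorem~\ref{theoremE} the cycle $Z_\m$ and its period are attached to the lattice $\m\oplus\m\subset\R^4$ coming from the two real embeddings of the quadratic field. One must exhibit a spanning family of geodesics $Z_\m$ for which the pair $(\m\oplus\m,\chi)$ is compatible with $(p\Z^4,\chi)$ in the precise sense needed for Theorem~\ref{theoremE} to compute the period of \eqref{lift118}; concretely, taking $F$ of discriminant prime to $p$ and $\m$ an ideal above $p$ makes the discriminant groups and the induced $\Gamma_0(p)$-level structures agree, so that \eqref{lift118} depends only on the resulting class of $\chi$ in $\C[D_\Lcal]$. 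With this identification in hand the corollary follows at once.
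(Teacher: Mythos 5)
Your high-level route is the same as the paper's: surjectivity of the lift onto $\Mcal_2^0(\Gamma_0(p))$ from Corollary \ref{corollaryH}, linearity, and Theorem \ref{diagonal restriction} to identify each period $E_{\varphi\chi}(Z_\m)$ as a diagonal restriction. But there is a genuine gap at your step 2: you take as an input ``the fact recalled just above that $H_1(Y_0(p);\C)$ is spanned by the closed geodesics $Z_\m$.'' That sentence in the introduction is not a citation to an external result; it is a forward reference to the content of this very corollary's proof, and it is the only step that requires an actual argument. The issue is that $H_1(Y_0(p);\C)$ is generated by the classes $\{z,\gamma z\}$ for $\gamma\in\Gamma_0(p)$, and while hyperbolic $\gamma$ give closed geodesics, the parabolic $\gamma$ give loops around the cusps, which are \emph{not} freely homotopic to closed geodesics (and they carry nontrivial homology on the open curve $Y_0(p)$ --- this is exactly the ``Eisenstein'' direction that pairs nontrivially with $\omega_E$). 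The paper's proof is devoted almost entirely to closing this hole: it exhibits explicit factorizations of the parabolic generators stabilizing $\infty$ and $0$ as products of two hyperbolic matrices in $\Gamma_0(p)$, so that every parabolic class is a sum of hyperbolic (geodesic) classes in homology. Without this, your chain of equalities $\image(E_{\varphi\chi})=\Span\{E_{\varphi\chi}(Z_\m)\}$ is unjustified.

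A secondary remark on the point you flag as the main obstacle: your proposed reconciliation (take $F$ of discriminant prime to $p$ and $\m$ an ideal above $p$) does not obviously yield a spanning family, since the hyperbolic elements $\gamma\in\Gamma_0(p)$ that arise produce geodesics attached to $\Q\bigl(\sqrt{\tr(\gamma)^2-4}\bigr)$, whose discriminant need not be prime to $p$. The paper instead goes the other way: for \emph{each} hyperbolic $\gamma$ it realizes the corresponding geodesic as a cycle $Z_\Ocal$ by suitably embedding the ring of integers of $\Q\bigl(\sqrt{\tr(\gamma)^2-4}\bigr)$ into $\Z^2$, and then invokes Theorem \ref{diagonal restriction}. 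Admittedly the paper is also terse here, but your restriction to split-compatible $(F,\m)$ would require an additional argument that such cycles already span $H_1(Y_0(p);\C)$, which you do not supply.
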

A similar (and more precise) result has been obtained by Li \cite{li_restriction}, partially proving and refining a conjecture of Yang \cite{yang}.

\subsection{Acknowledgements}
I thank Pierre Charollois, Henri Darmon, Luis Garcia, Peter Xu and Marti Roset for helpful discussions around this paper and comments on earlier versions. I am grateful to Riccardo Zuffetti for pointing out an error in a previous version of Theorem \ref{theoremC}.

\section{Locally symmetric spaces and special cycles}\label{locally symmetric space}

\subsection{Locally symmetric space of $\SL_N(\R)$} Let $X$ be the space of real and positive-definite quadratic forms on $\R^N$. We view an element $z \in X$ as a positive definite symmetric matrix, on which the group $\GL_N(\R)^+$ acts transitively by sending $z$ to $gzg^t$. The stabilizer of $z=\id_N$ is $K=\SO(N)$ so that we can identify
\begin{align}
    X \simeq \GL_N(\R)^+/\SO(N).
\end{align} It is a Riemannian manifold of dimension $\frac{N^2+N}{2}$.
We also have the space $S \subset X$ of symmetric matrices of determinant one. Similarly, it can be identified with a homogeneous space
\begin{align}
    S \simeq \SL_N(\R)/\SO(N),
\end{align}
and the dimension of $S$ is $\frac{N^2+N-2}{2}$. The action of $\GL_N(\R)^+$ on $X$ restricts to an action of $\SL_N(\R)$ on $S$. 

Let $\rho$ denote the action of $\GL_N(\Q)$ on $V$ defined by 
\begin{align}
   \vbf=(v,w) \longmapsto \rho_g\vbf = (gv,g^{-t}w).
\end{align}
It preserves the quadratic form $Q(\vbf) =v^tw$ and gives a representation $\rho \colon \GL_N(\R)^+ \longrightarrow \SO(N,N)$.

\subsection{Special cycles in $S$}
For $\vbf=(v,w) \in V_\R$ we define the subset
\begin{align}
    X_{\vbf} \coloneqq \left \{ \left . z \in X \right \vert \ v=zw \ \right \} \subset X.
\end{align}
A vector $\vbf=(v,w)$ such that $v$ and $w$ are both nonzero will be called a {\em regular vector}, and a {\em singular vector} otherwise.

\begin{prop} \label{properties of Xv} The submanifold $X_{\vbf}$ satisfies the following properties.
\begin{enumerate} 
\item \begin{itemize} \item[$-$] If $Q(\vbf)>0$, then $X_{\vbf}$ is a submanifold of codimension $N$.
\item[$-$] If $\vbf=(0,0)$, then $X_\vbf=X$.
\item[$-$] In all other case, $X_{\vbf}$ is empty.
\end{itemize} 
\item For any $g \in G$ we have 
\begin{align}
    gX_{\vbf}=X_{\rho_g\vbf}
\end{align}
\item It is invariant by scalar multiplication {\em i.e} $X_{\lambda \vbf}=X_{\vbf}$.
\end{enumerate}
\end{prop}
\begin{proof} If $Q(\vbf)>0$, then $X_\vbf$ is the zero locus of the smooth function $z \longrightarrow zw-v$, which is of rank $N$ (since $w$ is nonzero). It is clear that $X_{(0,0)}=X$.  If $\vbf$ is a nonzero vector with $v=0$ or $w=0$, then it is clear that $X_\vbf$ is empty. Finally, if $\vbf$ is regular and $X_{\vbf}$ is non-empty, then $v=zw$ for some $z \in X_\vbf$. This implies that $0<w^tzw=w^tv=Q(\vbf)$. This proves the first point and the two other are obvious.
\end{proof}
We can identify $X \simeq S \times \R_{>0}$ by the diffeomorphism
\begin{align} \label{bundleiso}
    X  \longrightarrow S \times \R_{>0}, \qquad z \longmapsto \left (p(z), \det(z)^{-\frac{1}{N}} \right ),
\end{align}
where the projection $p \colon X \longrightarrow S$ onto $S$ is defined by $p(z)=\det(z)^{-\frac{1}{N}}z$. The inverse of this map is 
\begin{align}
    S \times \R_{>0} \longrightarrow X, \qquad (z_1,u) \longmapsto z=z_1u.
\end{align}
Hence, if we identify $\R$ with $\R_{>0}$ via the exponential map, then we can view $X$ as a real vector bundle of rank one over $S$, which is $\GL_N(\R)^+$-equivariant in the sense that
\begin{align} \label{equivariance1}
    p(gz)=gp(z).
\end{align}
Let
\begin{align}
    S_{\vbf} \coloneqq p(X_{\vbf}) \subset S
\end{align}
be the image of $X_\vbf$ in $S$. We have
\begin{align} \label{equivariance2}
    g_1S_{\vbf}=S_{\rho_{g_1}\vbf}
\end{align}
for $g_1 \in \SL_N(\R)$.
\begin{rmk}
    The cycle $S_\vbf$ only depends on the lines spanned by $v$ and $w$.
\end{rmk}
We identify $X$ with the bundle $S \times \R_{>0}$ over $S$. A pair $(z_1,u) \in S \times \R_{>0}$ lies in $X_\vbf$ exactly when $v=uz_1w$. By taking the length on both sides we find that $u$ is uniquely determined by $z_1$. In other words, if we restrict the bundle $S \times \R_{>0}$ to $S_\vbf$, we can view $X_\vbf \subset S_\vbf \times \R_{>0}$ as the image of the section
\begin{align} \label{sectionofbund}
    S_\vbf \longrightarrow S_\vbf \times \R_{>0}, \qquad z_1 \longmapsto \left (z_1,\frac{\lVert v \rVert}{\lVert z_1w \rVert} \right ).
\end{align}
We deduce the following.
\begin{prop} \label{diffeomorphism}
Let $\vbf$ be a regular vector with $Q(\vbf)>0$. The restriction of the projection $p$ to $X_{\vbf}$ is a diffeomorphism onto $S_\vbf$. Hence, the image $S_{\vbf}$ is a closed submanifold of $S$ of codimension $N-1$.   
\end{prop}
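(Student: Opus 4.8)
The plan is to leverage the explicit description of $X_\vbf$ as the image of the section \eqref{sectionofbund}. First I would observe that the map
\[
s \colon S_\vbf \longrightarrow S_\vbf \times \R_{>0}, \qquad z_1 \longmapsto \left (z_1, \frac{\lVert v \rVert}{\lVert z_1 w \rVert}\right )
\]
is a smooth section of the trivial $\R_{>0}$-bundle (smooth because $z_1 w \neq 0$ whenever $S_\vbf \neq \emptyset$, as $B(\vbf) > 0$ forces $w \neq 0$ and $z_1$ is positive definite), and that under the identification $X \simeq S \times \R_{>0}$ of \eqref{bundleiso} we have established just above that $X_\vbf$ equals the image $s(S_\vbf)$. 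Since $s$ is a section of the projection $p$, the composite $p \circ s = \idrm_{S_\vbf}$; restricting $p$ to $X_\vbf = s(S_\vbf)$, the maps $p|_{X_\vbf}$ and $s$ are mutually inverse continuous (indeed smooth) bijections between $X_\vbf$ and $S_\vbf$. Hence $p|_{X_\vbf} \colon X_\vbf \to S_\vbf$ is a diffeomorphism.

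For the second assertion, that $S_\vbf$ is a closed embedded submanifold of $S$, I would argue as follows. We already know $X_\vbf \subset X$ is an embedded submanifold of codimension $N$ (stated in the previous subsection), and it is closed in $X$: indeed $X_\vbf = \{z \in X : v = zw\}$ is cut out by the vanishing of the continuous map $z \mapsto v - zw$, hence is closed in $X$. Under the diffeomorphism $X \simeq S \times \R_{>0}$, the closed embedded submanifold $X_\vbf$ corresponds to the graph of the smooth function $z_1 \mapsto \lVert v\rVert/\lVert z_1 w\rVert$ on $S_\vbf$; but a graph inside $S_\vbf \times \R_{>0}$ being a closed embedded submanifold forces its "shadow" $S_\vbf$ to be a closed embedded submanifold of $S$ — concretely, $S_\vbf = \pr_1(X_\vbf)$ and $\pr_1$ restricted to the graph is a diffeomorphism onto $S_\vbf$, so $S_\vbf$ inherits the submanifold structure, while closedness follows since the complement $S \setminus S_\vbf$ is open (its preimage under $s$ composed with inclusion, or more simply: a point $z_1$ lies outside $S_\vbf$ iff $(z_1, u) \notin X_\vbf$ for all $u$; since $X_\vbf$ is a graph over the closed set $S_\vbf$, its complement in $S \times \R_{>0}$ contains an open neighborhood of $\{z_1\} \times \R_{>0}$ whenever $z_1 \notin S_\vbf$).

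The only mild subtlety — and the step I would be most careful about — is making rigorous the passage "closed embedded submanifold which happens to be a graph $\Rightarrow$ its base is a closed embedded submanifold." The cleanest way to do this is to note that $\pr_1 \colon S \times \R_{>0} \to S$ is a submersion, that $s$ is a smooth embedding of $S_\vbf$ with image $X_\vbf$, and that $\pr_1 \circ s$ is the inclusion $S_\vbf \hookrightarrow S$; alternatively one can invoke Proposition \ref{diffeomorphism}'s first part directly, since once $p|_{X_\vbf}$ is a diffeomorphism onto $S_\vbf$ and $X_\vbf$ is a closed embedded submanifold of $X$, the image $S_\vbf = p(X_\vbf)$ is automatically a closed embedded submanifold of $S$ because $p$ is a trivial fiber bundle and $X_\vbf$ is transverse to (in fact a section over) the base. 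I expect no serious obstacle here; the proposition is essentially a formal consequence of the section description \eqref{sectionofbund} already derived in the text.
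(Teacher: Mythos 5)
Your argument is correct and follows the same route as the paper, which simply records the section description \eqref{sectionofbund} and states the proposition without further proof; you have filled in the deduction (mutually inverse smooth maps, closedness of $X_\vbf$ as the zero set of $z\mapsto zw-v$, and transport of the submanifold structure along the graph of $z_1\mapsto \lVert v\rVert/\lVert z_1w\rVert$, which is smooth on all of $S$ since $w\neq 0$). The one point to phrase carefully is the one you already flagged: one should work with the graph over all of $S$ (a closed embedded submanifold diffeomorphic to $S$ via $\pr_1$) containing $X_\vbf$, rather than calling $s$ a smooth map on $S_\vbf$ before $S_\vbf$ is known to be a manifold.
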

The other properties of Proposition \ref{properties of Xv} hold for $S_\vbf$.

Let $\vbf$ be a regular vector with $Q(\vbf)>0$. We can also view the submanifold $S_\vbf$ as an embedded copy of a smaller symmetric space $X_{N-1} \coloneqq \GL_{N-1}(\R)^+/\SO(N-1)$ inside $X$. The setting is similar to \cite{modsymb, schmidt}.

First, when $\vbf=(e_1,e_1)$ with $e_1=(1,0,\dots,0)^t$ the standard basis vector, the submanifold $X_\vbf$ consists of matrices of the form
\begin{align}
    z=\begin{pmatrix}
        1 & 0 \\ 0 & z'
    \end{pmatrix}
\end{align}
where $z' \in X_{N-1}$. Hence, the submanifold $X_\vbf$ is the image of the embedding
\begin{align} \label{jemb}
    j \colon X_{N-1} \hooklongrightarrow X, \qquad
    h\SO(N-1)  \longmapsto \begin{pmatrix}1 & 0 \\ 0 & h
    \end{pmatrix}\SO(N).
\end{align}
For general $\vbf=(v,w)$, let us write $v=\begin{pmatrix} v_1 \\ \vud \end{pmatrix} \in \R \times \R^{N-1}$ and $w=\begin{pmatrix} w_1 \\ \wud \end{pmatrix}$. First, we suppose that $w_1>0$. Let 
\begin{align}
    m_{\vbf} \coloneqq \begin{pmatrix} 
    v_1/\sqrt{n} & -\wud^t/w_1 \\
 \vud/\sqrt{n} & \id_{N-1} \\
    \end{pmatrix} \in \GL_N(\R)^+
\end{align}
where $n=Q(\vbf)$, and define the embedding $j_\vbf \colon X_{N-1} \hooklongrightarrow X$ by
\begin{align} \label{jemb2}
j_{\vbf}(h) \coloneqq m_\vbf \begin{pmatrix}1 & 0 \\ 0 & h\end{pmatrix}.
\end{align}
\begin{rmk}
Note that the inverse of $m_\vbf$ is 
\begin{align} \label{mvprop}
    m_\vbf^{-1}=\begin{pmatrix} w_1/\sqrt{n} & \wud^t/\sqrt{n} \\
    -\vud w_1/n & \id_{N-1}-\vud \wud^t/n
    \end{pmatrix},
\end{align}
so that $m_\vbf e_1= v/\sqrt{n}$ and $m_\vbf^{-t}e_1=w/\sqrt{n}$. Hence, the matrix $m_\vbf$ maps the line spanned by $e_1$ to the line spanned by $v$, and whose dual maps to the line spanned by $w$.
\end{rmk}

\begin{prop} When $w_1$ is positive, the submanifold $S_\vbf$ is the image of \begin{align}
    p \circ j_\vbf \colon X_{N-1} \hooklongrightarrow S, 
\end{align}
where $p$ is the projection from $X$ to $S$.
\end{prop}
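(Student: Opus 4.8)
The plan is to reduce everything to the special case $\vbf=(e_1,e_1)$ already treated, and transport it along the matrix $m_\vbf$ by combining the equivariance \eqref{invariance1} with the scaling invariance $X_{\lambda\vbf}=X_\vbf$. Recall from the discussion preceding \eqref{jemb} that $X_{(e_1,e_1)}=j(X_{N-1})$, the set of matrices of the form $\diag(1,z')$ with $z'\in X_{N-1}$; and that, from the definitions \eqref{jemb} and \eqref{jemb2}, the map $j_\vbf$ is nothing but $j$ followed by the action of $m_\vbf$ on $X$, i.e.\ $j_\vbf=(m_\vbf\,\cdot\,)\circ j$.

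The steps I would carry out are as follows. First, I would check that $m_\vbf\in\GL_N(\R)^+$: expanding the determinant along the bottom-right $\id_{N-1}$ block (Schur complement) gives
\begin{align*}
\det(m_\vbf)=\frac{v_1}{\sqrt n}+\frac{\wud^t\vud}{w_1\sqrt n}=\frac{v_1w_1+\wud^t\vud}{w_1\sqrt n}=\frac{B(\vbf)}{w_1\sqrt n}=\frac{\sqrt n}{w_1}>0,
\end{align*}
where positivity uses $n=B(\vbf)>0$ together with the hypothesis $w_1>0$ (this, besides making $m_\vbf$ well defined, is the only place the hypothesis enters). Second, by \eqref{mvprop} we have $m_\vbf e_1=v/\sqrt n$ and $m_\vbf^{-t}e_1=w/\sqrt n$, so $\rho_{m_\vbf}(e_1,e_1)=\tfrac{1}{\sqrt n}\vbf$. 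Third, applying \eqref{invariance1} with $g=m_\vbf\in\GL_N(\R)^+$ and then scaling invariance,
\begin{align*}
m_\vbf\cdot X_{(e_1,e_1)}=X_{\rho_{m_\vbf}(e_1,e_1)}=X_{\frac{1}{\sqrt n}\vbf}=X_\vbf .
\end{align*}
Since $X_{(e_1,e_1)}=j(X_{N-1})$ and $j_\vbf=(m_\vbf\,\cdot\,)\circ j$, this yields $j_\vbf(X_{N-1})=m_\vbf\cdot j(X_{N-1})=X_\vbf$. Applying the projection $p$ then gives $S_\vbf=p(X_\vbf)=(p\circ j_\vbf)(X_{N-1})$, which is the asserted equality. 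Finally, as $j$ is a closed embedding and $m_\vbf\,\cdot\,$ a diffeomorphism of $X$, the map $j_\vbf\colon X_{N-1}\to X_\vbf$ is a diffeomorphism; composing with the diffeomorphism $p|_{X_\vbf}\colon X_\vbf\to S_\vbf$ of Proposition \ref{diffeomorphism} shows that $p\circ j_\vbf\colon X_{N-1}\hooklongrightarrow S$ is indeed an embedding with image $S_\vbf$.

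There is essentially no serious obstacle here: all of the substance is contained in the equivariance \eqref{invariance1} and the matrix identities \eqref{mvprop}. The two points that require a little care are the sign of $\det(m_\vbf)$ — which is exactly what legitimizes invoking \eqref{invariance1}, stated only for $\GL_N(\R)^+$ — and keeping the two models of $X$ consistent (cosets of $\SO(N)$ versus positive-definite symmetric matrices, where $m_\vbf$ acts by left multiplication in the first and by $z\mapsto m_\vbf z m_\vbf^t$ in the second) when identifying $j_\vbf$ with $(m_\vbf\,\cdot\,)\circ j$.
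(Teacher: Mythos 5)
Your argument is correct. The determinant computation $\det(m_\vbf)=\sqrt{n}/w_1>0$ is right (and matches the formula the paper records just after its proof), the identities $m_\vbf e_1=v/\sqrt{n}$, $m_\vbf^{-t}e_1=w/\sqrt{n}$ from \eqref{mvprop} do give $\rho_{m_\vbf}(e_1,e_1)=\tfrac{1}{\sqrt{n}}\vbf$, and combining the equivariance \eqref{invariance1} with the scaling invariance $X_{\lambda\vbf}=X_\vbf$ legitimately yields $m_\vbf\cdot X_{(e_1,e_1)}=X_\vbf$, hence $S_\vbf=p(j_\vbf(X_{N-1}))$. This is a genuinely more streamlined route than the paper's. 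The paper proceeds in two stages: it first conjugates by the upper-triangular matrix $g$ with $g^{-t}w_1e_1=w$ to reduce to $\vbf'=(v',w_1e_1)$, then identifies $X_{\vbf'}$ by an explicit block-matrix description (writing out $j_{\vbf'}(h)j_{\vbf'}(h)^t$ and checking it has the required form), and finally translates back via $m_\vbf=gm_{\vbf'}$; the scaling invariance and the identity \eqref{mvprop} never enter its proof directly. Your version replaces all of that explicit matrix bookkeeping with a single application of equivariance, at the cost of having to verify $\det(m_\vbf)>0$ up front and to be careful that the left action on cosets matches $z\mapsto m_\vbf z m_\vbf^t$ on symmetric matrices --- both of which you address. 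What the paper's longer computation buys in exchange is the concrete block description of the points of $X_{\vbf'}$, which is not needed for the statement itself.
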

\begin{proof} Recall that for any matrix $g \in \GL_N(\R)^+$ we have $gX_{(v,w)}=X_{(gv,g^{-t}w)}$. Let
\begin{align}
    g \coloneqq  \begin{pmatrix}
    1 & -\wud^t/w_1 \\
     0 & \id_{N-1}
    \end{pmatrix}
\end{align}
be the matrix such that $g^{-t}w_1e_1=w$. If $z \in X_{(v',w_1e_1)}$, then $gz \in X_{(gv',w)}=X_{(v,w)}$ where 
\begin{align}
v' \coloneqq g^{-1}v= \begin{pmatrix} n/w_1\\ \vud\end{pmatrix} \in \R \times \R^{N-1}.
\end{align}

For $\vbf'=(v',w_1e_1)$, with $v'$ as above, the submanifold $X_{\vbf'}$ consists of positive definite invertible matrices of the form
\begin{align}
    z= \begin{pmatrix}
        n/w_1^2 & \vud^t/w_1 \\
        \vud/w_1 & z'
    \end{pmatrix}
\end{align}
for some $(N-1) \times (N-1)$-matrix $z'$. In fact, it is the image of the embedding
\begin{align}
    j_{\vbf'} \colon X_{N-1} \hooklongrightarrow X, \qquad h\SO(N-1) \longmapsto j_{\vbf'}(h)\SO(N)
\end{align}
where $j_{\vbf'}$ is the embedding \eqref{jemb2} and for $\vbf'=(v',w_1e_1)$ the matrix $m_{\vbf'}$ is
\begin{align}
    m_{\vbf'} & = \begin{pmatrix}
        \sqrt{n}/w_1 & 0 \\
        \vud/\sqrt{n} & \id_{N-1}
    \end{pmatrix} \in \GL_N(\R)^+.
\end{align}
Indeed, we see that
\begin{align}
 j_{\vbf'}(h)j_{\vbf'}(h)^t & =   m_{\vbf'} \begin{pmatrix}
     1 & 0 \\ 0 & hh^t
 \end{pmatrix} (m_{\vbf'})^t = \begin{pmatrix}
     n/w_1^2 & \vud^t/w_1 \\ \vud/w_1 & \vud \vud^t/n+hh^t
 \end{pmatrix},
\end{align}
where $ \vud \vud^t/n+hh^t$ is positive-definite and invertible.

So $X_{\vbf}=gX_{\vbf'}$ is obtained by translating by $g$ and is the image of the embedding
\begin{align}
    j_{\vbf} \colon X_{N-1} \hooklongrightarrow X, \qquad h \longmapsto gj'_\vbf(h)=m_\vbf\begin{pmatrix}
        1 & 0 \\ 0 & h
    \end{pmatrix}
\end{align}
where the matrix $m_\vbf$ is
\begin{align}
    m_{\vbf} = gm'_\vbf=\begin{pmatrix} 
    v_1/\sqrt{n} & -\wud^t/w_1 \\
 \vud/\sqrt{n} & \id_{N-1} \\
    \end{pmatrix} \in \GL_N(\R)^+.
\end{align}
Hence, the submanifold $S_\vbf$ is the image of
\begin{align}
    p \circ j_\vbf \colon X_{N-1} \hooklongrightarrow S. 
\end{align}
\end{proof}
Note that $\det(m_\vbf)=\det(m_{\vbf'})=\sqrt{n}/\vert w_1 \vert$ so that
\begin{align}
p \circ j_{\vbf}(h)=\left (\frac{\vert w_1 \vert}{\sqrt{n}\det(h)}\right )^\frac{1}{N} m_\vbf\begin{pmatrix}
        1 & 0 \\ 0 & h
    \end{pmatrix}.
\end{align}
\begin{rmk}
    If $w_1$ is nonpositive, then one can compose the embedding with a permutation matrix that will bring one of the positive $w_i$'s in the first position. Note that at least one entry of $w$ must be nonzero (otherwise $X_\vbf$ is empty), and if all entries are negative we can replace $X_\vbf$ by $X_{-\vbf}=X_\vbf$. The submanifold $X_\vbf$ is then the translate by a matrix $m_\vbf$ of the embedding of the form
    \begin{align}
    j \colon X_{N-1} \hooklongrightarrow X, \qquad
    \begin{pmatrix}
        a & b \\ c & d
    \end{pmatrix}\SO(N-1)  \longmapsto \begin{pmatrix}a & 0 & b \\ 0 & 1 & 0 \\ c & 0 & d
    \end{pmatrix}\SO(N).
\end{align}
\end{rmk}

\subsection{Quotients} Let $L \subset \Z^{N}$ be a lattice, and $\Lcal=L \oplus L \subseteq V_\Z$. Let $\Lcal^\vee$ be the dual with respect to the bilinear form
\begin{align}
B(\vbf_1,\vbf_2)=\langle v_1,w_2 \rangle +  \langle w_1,v_2 \rangle
\end{align} on $V$. Let $Q(\vbf)=\frac{1}{2}B(\vbf,\vbf)$ be the associated quadratic form. As in the introduction we fix a test function $\chi \in \C[D_\Lcal]$ that is preserved by a torsion-free subgroup $\Gamma$ of $\SL_N(\Z)$. We view $\chi$ as a linear combination of characteristic functions $\id_{\vbf_0+\Lcal}$ on $V$. The quotient
\begin{align}
    X_\Gamma \coloneqq \Gamma \backslash X
\end{align}
is a manifold. The group $\Gamma$ only acts on $S$ so that we have $X_\Gamma \simeq \R_{>0} \times S_\Gamma$ where
\begin{align}
    S_\Gamma \coloneqq \Gamma \backslash S.
\end{align}

\begin{prop} \label{finiterep} For any positive integer $n$ the set 
    \begin{align}
    \left \{ \biggl . \vbf \in \Gamma \backslash \Lcal^\vee \ \biggr \vert \ Q(\vbf)=n \right \}    
    \end{align} is finite.
\end{prop}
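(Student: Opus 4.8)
The plan is to reduce the problem to a statement about the action of an arithmetic group on a fixed fiber of the map $B$ and then exploit that $B(\vbf) = v^t w = n > 0$ forces both $v$ and $w$ to be nonzero, so that the pair $(v,w)$ lies on a product of quasi-affine varieties on which $\Gamma$ acts with finitely many orbits. First I would observe that $\Lcal^\vee = L^\ast \oplus L^\ast$ is contained in $\frac{1}{D}\Z^N \oplus \frac{1}{D}\Z^N$ for a suitable integer $D$ (e.g. $D = [L^\ast : L]$ or any denominator clearing $L^\ast$), so after rescaling by $D$ it suffices to prove finiteness for the set of $\Gamma$-orbits of pairs $(v,w) \in \Z^N \oplus \Z^N$ with $v^t w = D^2 n$ fixed, where $\Gamma \subset \SL_N(\Z)$ acts by $\rho_\gamma(v,w) = (\gamma v, \gamma^{-t}w)$. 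Since $\Gamma$ has finite index in $\SL_N(\Z)$, it is enough to prove the claim for $\Gamma = \SL_N(\Z)$ itself; a finite-index subgroup then has only finitely many times as many orbits.

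Next I would handle the $\SL_N(\Z)$-action. Write $m = D^2 n > 0$. Given $(v,w) \in \Z^N \times \Z^N$ with $v^t w = m$, necessarily $v \neq 0$, so let $d = d(v) = \gcd(v_1,\dots,v_N) \geq 1$; then $d \mid m$, so $d$ takes only finitely many values. For each fixed $d$, the vector $v$ is $d$ times a primitive vector, and $\SL_N(\Z)$ acts transitively on primitive vectors of $\Z^N$ (this is the standard fact that a primitive vector extends to a basis), so up to the $\SL_N(\Z)$-action we may assume $v = d\, e_1$. The stabilizer of $e_1$ in $\SL_N(\Z)$ is the group $P$ of matrices with first column $e_1$, i.e. $\gamma = \begin{pmatrix} 1 & \ast \\ 0 & \gamma' \end{pmatrix}$ with $\gamma' \in \GL_{N-1}(\Z)$, $\det\gamma' = 1$, and $\ast \in \Z^{N-1}$ arbitrary; it acts on $w = (w_1, \wud)$ by $\gamma^{-t} w = (w_1,\ \gamma'^{-t}\wud - (\text{linear in }\ast)\, w_1)$ — the key point being that the constraint $v^t w = d\, w_1 = m$ \emph{pins down} $w_1 = m/d$ (in particular $d \mid m$ again) and leaves $\wud \in \Z^{N-1}$ to vary. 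Finally, modulo the $P$-action, $\wud$ is shifted by an arbitrary element of $w_1 \Z^{N-1}$ via the $\ast$-part and then acted on by $\SL_{N-1}(\Z)$; in any case there are only finitely many orbits of $\Z^{N-1}$ under (translations by $w_1\Z^{N-1}$), namely $|w_1|^{N-1} = |m/d|^{N-1}$ of them, so finiteness follows. Summing the finitely many contributions over the finitely many admissible $d$ gives the claim.

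The main obstacle I anticipate is the bookkeeping in the last step: one must be careful that the full stabilizer $P$ of $e_1$ (not just $\SL_{N-1}(\Z)$) acts, and track how the unipotent part $\ast$ moves $\wud$, to see that the quotient $\Z^{N-1}/(w_1\Z^{N-1})$ (or a further quotient by $\SL_{N-1}(\Z)$) is finite — this is elementary but needs the observation that once $v$ is normalized, $w_1$ is \emph{forced} to be $m/d$ rather than free, which is exactly where the hypothesis $B(\vbf) = n$ is used. An alternative, perhaps cleaner, route is to invoke directly that $B$ is $\SO(N,N)$-equivariant and that $\rho$ lands in $\SO(N,N)(\Z)$ preserving $\Lcal$, together with the general fact (reduction theory for the real quadratic space of signature $(N,N)$, or the Hermite–Minkowski finiteness for orbits of arithmetic groups on integral points of an affine variety $\{B = n\}$) that an arithmetic group acts on the integral points of a fixed nonzero level set of an isotropic quadratic form with finitely many orbits when the level set is a single $G(\R)$-orbit; but the hands-on argument above is self-contained and avoids citing reduction theory, so I would present that.
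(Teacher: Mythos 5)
Your proposal is correct and follows essentially the same route as the paper: reduce to $\SL_N(\Z)$ acting on integral pairs, normalize $v$ to $d\,e_1$ with $d=\gcd(v)\mid n$ (Hermite normal form / transitivity on primitive vectors), observe that $B(\vbf)=n$ forces $w_1=n/d$, and reduce the remaining entries of $w$ modulo $w_1$ using the unipotent part of the stabilizer of $e_1$. The only cosmetic difference is the reduction to the integral case (you rescale by a denominator, the paper conjugates by a basis matrix $g_0$ of $L^\ast$), and both yield the same finite set of representatives.
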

\begin{proof}  
Since $\Lcal \subseteq V_\Z$, the proposition follows from the finiteness of
 \begin{align}
    \left \{ \biggl . \vbf \in  \SL_N(\Z) \backslash V_\Z \ \biggr  \vert \ Q(\vbf)=n \right \}.    
    \end{align}
By the Hermite normal form, we can find $\gamma \in \SL_N(\Z)$ such that 
\begin{align}
    \gamma v = \begin{pmatrix}
        D \\ \Obf_{N-1}
    \end{pmatrix} \in De_1,
\end{align} where $D=\gcd(v_1,v_2,\dots,v_N)$. Thus, every orbit can be reduced to
\begin{align}
    \SL_N(\Z)(v,w)=\SL_N(\Z) \left (\begin{pmatrix}
        D \\ \Obf_{N-1}
    \end{pmatrix},\begin{pmatrix}
        w'_1 \\ \wud'
    \end{pmatrix} \right ).
\end{align}
We have $Dw'_1=v^tw=n$, so $w'_1$ must be one of the divisors of $n$.

Reducing $w'_2$ modulo $ w'_1$ we can write $w'_2=q w'_1+r_2$ for some $r_2 \in \Z/w'_1 \Z$. Then, for
\begin{align}
    \gamma=\begin{pmatrix}
        1 & q & 0 \\
        0 & 1 & 0 \\
        0 & 0 & \id_{N-2}
    \end{pmatrix} \in \SL_N(\Z)
\end{align} we have
\begin{align} \label{representatives}
    \rho_{\gamma}\left (\begin{pmatrix}
        D \\ 0 \\ \Obf_{N-2}
    \end{pmatrix},\begin{pmatrix}
        w_1' \\ w_2' \\ \vdots
    \end{pmatrix} \right )= \left ( \begin{pmatrix}
        D \\ 0 \\ \Obf_{N-2}
    \end{pmatrix},\begin{pmatrix}
        w_1' \\ r_2 \\ \vdots
    \end{pmatrix} \right ).
\end{align}
Recall that $\rho_\gamma$ acts by $\gamma^{-t}$ in the second factor. Reducing the remaining entries $w_3', \dots, w_N'$ modulo $w'_1$, we see that every double coset can be reduced to
\begin{align}
    \SL_N(\Z)(v,w)=\SL_N(\Z) \left (\begin{pmatrix}
        D \\ \Obf_{N-1}
    \end{pmatrix},\begin{pmatrix}
        w'_1 \\ \underline{r}
    \end{pmatrix} \right )
\end{align}
where $w_1'$ is a divisor of $Q(\vbf)=n$ and $\underline{r} \in (\Z/ w'_1\Z)^{N-1}$.
\end{proof}

\subsubsection{Special cycles in the quotient.} The stabilizer of $\vbf$ under the action $\rho$ is the subgroup
\begin{align}
   \Gamma_\vbf \coloneqq \left \{ \left . \gamma  \in \Gamma \ \right \vert \ \gamma v=v, \ w^t \gamma=w^t\right \}.  
 \end{align}
Recall from \eqref{mvprop} that $m_\vbf$ satisfies $m_\vbf e_1= v/\sqrt{n}$ and $m_\vbf^{-t}e_1=w/\sqrt{n}$. If $\gamma \in \Gamma_\vbf$ then $(m_\vbf^{-1} \gamma m_\vbf)e_1=e_1$ and $e_1^t(m_\vbf^{-1} \gamma m_\vbf)=e_1^t$. Thus, we have
\begin{align}
    m_\vbf^{-1} \gamma m_\vbf=\begin{pmatrix}
        1 & 0 \\ 0 & \gamma'
    \end{pmatrix}=j(\gamma')
\end{align}
for some $\gamma' \in \SL_{N-1}(\R)$. Let $\Gamtil_\vbf \subset \SL_{N-1}(\R)$ be the subgroup such that
\begin{align}
\Gamma_\vbf \coloneqq m_\vbf j(\Gamtil_\vbf)m_\vbf^{-1}.
\end{align}
The embedding $p \circ j_\vbf$ passes to the embedding of the quotient
 \begin{align} \label{jvembls}
     p \circ j_\vbf \colon \Gamtil_\vbf \backslash X_{N-1} \hooklongrightarrow \Gamma_{\vbf} \backslash S, \qquad
     \Gamtil_\vbf z \longmapsto \Gamma_\vbf j_\vbf(z)=\Gamma_\vbf m_\vbf j(z)
 \end{align}
 with image $\Gamma_{\vbf} \backslash S_\vbf$. 
\begin{rmk}
    Although the coefficients of $m_\vbf$ contain $\sqrt{n}$, we have
    \begin{align} \label{conjugation}
m_\vbf j(\gamma) m_\vbf^{-1} &= \begin{pmatrix} v_1/\sqrt{n} & -\wud^t/w_1 \\
    \vud/\sqrt{n} & \id_{N-1}
    \end{pmatrix} \begin{pmatrix}
        1 & 0 \\
        0 & \gamma \\
    \end{pmatrix} \begin{pmatrix} w_1/\sqrt{n} & \wud^t/\sqrt{n} \\
    -\vud w_1/n & \id_{N-1}-\vud \wud^t/n
    \end{pmatrix} \nonumber \\
& =\begin{pmatrix}
        1 & 0 \\
        0 & \gamma \\
    \end{pmatrix}+\frac{1}{n} \begin{pmatrix}
    \wud^t(\gamma\vud-\vud) & (\wud^t-\wud^t\gamma)v_1 \\
    (\vud-\gamma\vud)w_1 & (\vud-\gamma \vud)\wud^t
\end{pmatrix}.
\end{align}
Hence, if $\Gamma$ is contained in $\SL_N(\Z)$ then the subgroup $\Gamtil_\vbf$ is a congruence subgroup of $\SL_N(\Z)$.
\end{rmk}

Let us denote by $Z_{[\vbf]}$ the image of the map 
\begin{align} \jov_\vbf \colon \Gamtil_\vbf \backslash X_{N-1} & \hooklongrightarrow \Gamma_\vbf \backslash S \longrightarrow S_\Gamma
\end{align}
obtained by composing the map $p \circ j_\vbf$ in \eqref{jvembls} with the projection of $ \Gamma_\vbf \backslash S$ onto $S_\Gamma$. This is the map of \cite[p.~116]{modsymb} and is proper. It defines a locally finite cycle
\begin{align}
    Z_{[\vbf]} \in Z^{\BM}_{\frac{N^2-N}{2}}\left (S_\Gamma ;\Z \right )
\end{align}
of dimension $\frac{N^2-N}{2}$ (and codimension $N-1$) that depends only the equivalence class $\vbf \in \Gamma\backslash  \Lcal^\vee/\Z^\times$. We then define for $n \in \NN_{>0}$
\begin{align}
 Z_n(\chi) \coloneqq \sum_{\substack{[\vbf] \in \Gamma \backslash  V \\ Q(\vbf)=n}} \chi(\vbf)Z_{[\vbf]} \in Z^{\BM}_{\frac{N^2-N}{2}}\left (S_\Gamma ;  \C \right ).
\end{align}
The sum is finite by Proposition \ref{finiterep} since $\chi$ is supported on $\Lcal^\vee$.

\subsubsection{Orientability.} The submanifold $X_\vbf$ is the zero locus of the function
\begin{align}
    f_\vbf \colon X  \longrightarrow \R^N, \qquad
    z \longmapsto zw-v.
\end{align}
The differential is $d_zf_\vbf(A)=Aw$ and the function is regular when $\vbf$ is regular (since it implies $w \neq 0$). At a point $z \in X_\vbf$ the tangent space splits as
\begin{align}
    T_zX \simeq T_zX_\vbf \oplus N_zX_\vbf
\end{align}
where $NX_\vbf$ is the normal bundle. The kernel of the differential is $T_zX_\vbf$ so that it induces an isomorphism $N_zX_\vbf \simeq T_0\R^N \simeq \R^N$. Hence, an orientation $o(\R^N)$ of $\R^N$ determines an orientation $o(N_zX_\vbf)$ of the normal bundle. So after fixing an orientation $o(T_zX)$ of $X$ this determines an orientation of $X_\vbf$ by the rule
\begin{align}
    o(T_zX)=o(T_zX_\vbf)\wedge o(N_zX_\vbf).
\end{align}
The orientation of $S_\vbf$ then comes from the diffeomorphism $X_\vbf\simeq S_\vbf$. 

\section{Thom forms and the Mathai-Quillen formalism} \label{sec:mq}

Let $K=\SO(N)$ be the maximal compact subgroup of $\GL_N(\R)^+$. Let $\g=\p \oplus \kfrak$ be a Cartan decomposition of the Lie algebra $\g\simeq \Mat_N(\R)$ of $\GL_N(\R)^+$, where $\kfrak=\so(\R^{N})$ is the space of skew-symmetric matrices, and $\p$ is the space of symmetric matrices. We consider the rank $N$ vector bundle $E=\GL_N(\R)^+ \times_K \R^{N}$. It consists of pairs $[g,v]$ with the equivalence relation $[g,v]=[gk,k^{-1}v]$ and the projection map $E \rightarrow X$ is $[g,v] \mapsto gK$. We view it as a metric bundle with the metric $v^tv=\lVert v \rVert^2$. The bundle is $\GL_N(\R)^+$-equivariant, where $\GL_N(\R)^+$ acts on $E$ by $g[g',v]=[gg',v]$. If $\Gamma \subset \SL_N(\Z)$ is a discrete subgroup as previously, then we have a real vector bundle $E_\Gamma \coloneqq \Gamma \backslash E$ over $X_\Gamma$.

\subsection{Thom isomorphism} \label{subsec: thom iso}
Let $\Omega_{\cv}^i(E_\Gamma)$ be the space of $i$-forms with {\em compact vertical} support {\em i.e.}, with support contained in the disk bundle $DE_\Gamma \subset E_\Gamma$. The cohomology of this complex is the relative cohomology $H^i(E_\Gamma,E_\Gamma \smallsetminus DE_\Gamma)$. 
After fixing an orientation of the fibers, the integration along the fibers of the bundle $E_\Gamma$ induces the Thom isomorphism
\begin{align}
    H^i(E_\Gamma,E_\Gamma \smallsetminus DE_\Gamma) \longrightarrow H^{i-N}(X_\Gamma).
\end{align}
The preimage of $1 \in H^0(X_\Gamma)\simeq \Z$ is a class
\begin{align}
    \Th(E_\Gamma) \in H^N(E_\Gamma,E_\Gamma \smallsetminus DE_\Gamma)
\end{align}
called the Thom class. It can also be seen as a Poincaré dual of the zero section $(E_\Gamma)_0$ in $E_\Gamma$. It is represented by a closed differential form $U \in \Omega^{N}_\rd(E_\Gamma)$ on $E_\Gamma$, that has integral $1$ along the fibers. The Mathai-Quillen formalism is a canonical way to produce such a representative, that will depend on the choice of a connection on the bundle that is compatible with the metric.

\subsection{Mathai-Quillen construction} \label{mqconstruction}

Let $\Scal(\R^{N})$ be the space of Schwartz functions on $\R^N$. By composition with the diffeomorphism 
\begin{align}
    h \colon D \longrightarrow \R^{N}, \qquad
    w  \longmapsto \frac{w}{\sqrt{1-\lVert w \rVert^2}}
\end{align}
from the unit disk $D \subset \R^{N}$ onto $\R^{N}$ we obtain a map from $\Scal(\R^{N})$ to the space of smooth functions on $\R^{N}$ supported on $D$.

Let $\Omega^i_{\rd}(E_\Gamma) \subset \Omega^i(E_\Gamma)$ be the complex of differential forms that are rapidly decreasing in the fiber of $E_\Gamma$. The pullback by $h$ induces a map $\Omega^i_{\rd}(E_\Gamma) \longrightarrow \Omega^i_{\cv}(E_\Gamma)$. 
In this section we explain how Mathai and Quillen construct a differential form 
\begin{align}
    U \in \Omega_{\rd}^{N}(E)^{\GL_N(\R)^+}
\end{align}
that is closed, $\GL_N(\R)^+$-invariant and with integral $1$ along the fiber. In particular it will be $\Gamma$-invariant and descends to a form
\begin{align}
    U \in \Omega_{\rd}^{N}(E_\Gamma)
\end{align}
that represents the Thom class.

\subsubsection{Some operations on vector bundles.} The tautological bundle $E_ {\taut}$ over $E$ is the pullback of $E$ along the projection map $E \rightarrow X$. More concretely, we have 
\begin{align}
E_ {\taut}=\GL_N(\R)^+ \times_K (\R^{N} \times \R^{N})
\end{align} where the equivalence relation is $[g,v,w]=[gk,k^{-1}v,k^{-1}w]$. Let us consider the exterior product $\bigwedge^jE_{\taut}=(\GL_N(\R)^+ \times \R^{N}) \times_K \bigwedge^j \R^{N}$ of the tautological bundle over $E$, where $K$ acts by $(gk,k^{-1}v)$ in the left factor and by $k^{-1}(v_1 \wedge \cdots \wedge v_j)=(k^{-1}v_1) \wedge \cdots \wedge (k^{-1}v_j)$ in the right factor. We also define 
\begin{align}
\bigwedge E_{\taut}=(\GL_N(\R)^+ \times \R^{N}) \times_K \bigwedge \R^{N} = \oplus_{j=0}^N \bigwedge^jE_{\taut}.
\end{align} A differential $i$-form on $E$ with values in  $\bigwedge^j E_{\taut}$ is an element in $\Omega^i(E,\bigwedge^j E_{\taut})$. It can also be seen as  a basic form ($K$-invariant and trivial on vertical vectors, see \cite[Proposition~1.9]{berline}) on $\GL_N(\R)^+ \times \R^{N}$ with values in $\bigwedge^j \R^N$. 
Let us define
\begin{align}
    \Omega^{i,j} \coloneqq\Omega^i(E,\bigwedge^j E_{\taut}) \simeq \left [ \Omega^i(\GL_N(\R)^+ \times \R^{N}) \otimes  \sideset{}{^{j}}\bigwedge \R^{N} \right ]_{\bas}
\end{align}
and $\Omega^{i,\bullet} \coloneqq \sum_{j=0}^N \Omega^{i,j}$. We define $\Omega^{\bullet,j}$ and $\Omega^{\bullet,\bullet}$ similarly. We can equip $\Omega^{\bullet,\bullet}$ with the multiplication
\begin{align}
   \Omega^{i,j} \times \Omega^{k,l} & \longrightarrow \Omega^{i+k,j+l}
\end{align}
defined by
\begin{align}
 (\omega \otimes \nu) \wedge (\omega' \otimes \nu') \coloneqq (-1)^{jk}(\omega \wedge \omega') \otimes (\nu \wedge \nu').
\end{align}
This makes $\Omega^{\bullet,\bullet}$ an associative bigraded algebra over the ring of differential forms on $E$. We extend the pairing $\langle-,-\rangle$ to a pairing on $\wedge \R^{N}$ by defining
\begin{align}
    \langle v_1\wedge \cdots \wedge v_j,v'_1 \wedge \cdots \wedge v_l' \rangle= \begin{cases}
        0 & \textrm{if} \ j \neq l \\
        \det(v_a^tv'_b)_{a,b} & \textrm{if} \ j=l.
    \end{cases}
\end{align}
With this bilinear form the splitting $\wedge \R^{N}= \oplus_{j=0}^N \wedge^j \R^{N}$ is orthogonal. Furthermore, since $K$ preserves this bilinear form, it induces a bilinear form on the bundle. The algebra structure allows us to define the exponential map
\begin{align}
    \exp \colon \Omega^{\bullet,\bullet}  \longrightarrow \Omega^{\bullet,\bullet}, \qquad
    \omega \otimes \nu \longmapsto \sum_{k=0}^N \frac{1}{k!} (\omega \otimes \nu)^{k}. 
\end{align}

Let $e_1, \dots, e_N$ be the standard basis of $\R^{N}$. For a subset $I=\{i_1< \cdots<i_k\}$ of $\{1, \dots, N\}$ let $(\R^{N})_I$ be the susbpace spanned by the vectors $e_{i_1}, \dots, e_{i_N}$. The monomials
\begin{align}
    e_I \coloneqq e_{i_1} \wedge \cdots \wedge e_{i_k}
\end{align}
form an orthonormal basis of $\wedge \R^{N}$. The Berezinian integral is the projection onto the top dimensional component $e_1 \wedge \dots \wedge e_N$
\begin{align}
    \int^B \colon \Omega^{i,\bullet} \longrightarrow \Omega^N(E), \qquad
    \alpha = \omega \otimes \nu \longmapsto \omega \otimes \langle \nu,e_1 \wedge \dots \wedge e_N\rangle.
\end{align}
Similarly, we have the projections
\begin{align}
    \Omega^{i,\bullet} \longrightarrow \Omega^i(E), \qquad
    \alpha = \omega \otimes \nu \longmapsto \alpha_I \coloneqq \omega \langle \nu,e_I \rangle.
\end{align}
For disjoint subsets $I_1, \dots,I_k \subset \{ 1,\dots, N\}$ let define $\epsilon(I_1, \dots,I_k)= \pm 1$ by
\begin{align}
    e_{I_1} \wedge \cdots \wedge e_{I_k}=\epsilon(I_1, \dots,I_k) e_{I_1 \cup \cdots \cup I_k}.
\end{align}
\begin{lem} \label{splitlem} Let $\alpha \in \Omega^{i,\bullet}$ and $\beta \in \Omega^{k,\bullet}$. We have
\begin{align}
 (\alpha \wedge \beta)_J = \sum_{I \subset J}  \epsilon(I,J \smallsetminus I) (-1)^{k \vert I \vert } \alpha_I \wedge \beta_{J-I}.
\end{align}
\end{lem}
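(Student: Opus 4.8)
The plan is to reduce to the case of decomposable elements and then track signs carefully. By bilinearity of both sides in $\alpha$ and $\beta$, it suffices to prove the identity when $\alpha = \omega \otimes e_A$ and $\beta = \omega' \otimes e_B$ for subsets $A, B \subset \{1,\dots,N\}$ and differential forms $\omega \in \Omega^i(E)$, $\omega' \in \Omega^k(E)$. In that case $\alpha \wedge \beta = (-1)^{|A| k}(\omega \wedge \omega') \otimes (e_A \wedge e_B)$ by the definition of the product on $\Omega^{\bullet,\bullet}$, and $e_A \wedge e_B$ is zero unless $A \cap B = \varnothing$, in which case it equals $\epsilon(A,B)\, e_{A \cup B}$. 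Therefore $(\alpha \wedge \beta)_J = B(e_A \wedge e_B, e_J) \cdot (-1)^{|A|k}(\omega \wedge \omega')$, and since the $e_I$ form an orthonormal basis this is nonzero only when $J = A \cup B$ (so $A = I$, $B = J \smallsetminus I$ for the unique relevant $I \subset J$), where it equals $\epsilon(A,B)(-1)^{|A|k}(\omega \wedge \omega')$.

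On the right-hand side, the term indexed by a subset $I \subset J$ is $\epsilon(I, J \smallsetminus I)(-1)^{k|I|}\alpha_I \wedge \beta_{J \smallsetminus I}$, where $\alpha_I = B(e_A, e_I)\,\omega$ vanishes unless $I = A$, and $\beta_{J \smallsetminus I} = B(e_B, e_{J \smallsetminus I})\,\omega'$ vanishes unless $J \smallsetminus I = B$. Hence the only surviving term is $I = A$, $J \smallsetminus I = B$ (which forces $J = A \cup B$ and $A \cap B = \varnothing$), contributing $\epsilon(A, B)(-1)^{k|A|}(\omega \wedge \omega')$. Comparing with the computation of the left-hand side, the two expressions agree, and both vanish identically when $J \neq A \cup B$ or $A \cap B \neq \varnothing$. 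This proves the lemma for decomposable $\alpha$, $\beta$, and the general case follows by linearity.

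I do not expect a serious obstacle here; the statement is essentially bookkeeping. The one point requiring a little care is the sign $(-1)^{k|I|}$ versus $(-1)^{k|A|}$: these must be reconciled using that the only nonzero term has $I = A$, and one should double-check that the $\epsilon$-factor $\epsilon(I, J \smallsetminus I)$ on the right matches $\epsilon(A, B)$ coming from $e_A \wedge e_B = \epsilon(A,B) e_{A\cup B}$ on the left, which is immediate from the definition of $\epsilon$. It is also worth noting explicitly that $\Omega^{i,\bullet}$-valued forms need not be decomposable as single tensors $\omega \otimes \nu$, but since $\wedge\R^N$ has the orthonormal basis $\{e_I\}$, any element is a finite sum of terms $\omega_I \otimes e_I$, so the reduction to the decomposable case is legitimate and both sides of the asserted identity are additive in that decomposition.
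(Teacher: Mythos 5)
Your proof is correct and follows essentially the same route as the paper: both expand $\alpha$ and $\beta$ in the orthonormal basis $\{e_I\}$ of $\wedge\R^N$, apply the sign convention of the product on $\Omega^{\bullet,\bullet}$, and read off the $J$-component. The sign bookkeeping ($(-1)^{k|I|}$ surviving only at $I=A$, and $\epsilon(I,J\smallsetminus I)$ matching $\epsilon(A,B)$) is handled correctly.
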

\begin{proof}
    We can write 
    \begin{align}
\alpha = \sum_{I_1 \subset \{ 1,\dots, N\}} \alpha_{I_1} \otimes e_{I_1}, \quad \beta = \sum_{I_2 \subset \{ 1,\dots, N\}} \beta_{I_2} \otimes e_{I_2}. 
    \end{align}
    Hence, we have
    \begin{align}
        \alpha \wedge \beta & = \sum_{I_1,I_2 \subset \{ 1,\dots, N\}} (-1)^{k \vert I_1 \vert } \alpha_{I_1} 
 \wedge \beta_{I_2} \otimes e_{I_1} \wedge e_{I_2} \nonumber \\
 & = \sum_{I_1,I_2 \subset \{ 1,\dots, N\}} \epsilon(I_1,I_2)(-1)^{k \vert I_1 \vert } \alpha_{I_1} 
 \wedge \beta_{I_2} \otimes e_{I_1 \cup I_2}.
    \end{align}
    Taking the $J$-th component will kill all the terms except when $I_1$ and $I_2$ satisfy $I_1 \cup I_2 =J$.
\end{proof}

\subsubsection{Connection forms, covariant derivative and curvature form.} Let $\pi$ be the projection of $\g$ onto  $\kfrak=\so(\R^{N})$ defined by $\pi(X)=\frac{1}{2}(X-X^t)$. Note that $\pi \circ \Ad(k)=\Ad(k) \circ \pi$ for $k \in K$, where the adjoint map is $\Ad(k)X=kXk^{-1}$. The Maurer-Cartan form is the $1$-form
\begin{align}
   \vartheta\coloneqq g^{-1}dg \in \Omega^1(\GL_N(\R)^+)\otimes \g,
\end{align}
and we define the connection form
\begin{align}
    \theta \coloneqq \pi(\vartheta)=\frac{1}{2}(\vartheta-\vartheta^{t}) \in \Omega^1(\GL_N(\R)^+)\otimes \kfrak.
\end{align}
By pulling back by the projection map $\GL_N(\R)^+ \times \R^{N} \longrightarrow \GL_N(\R)^+$ we also get a connection form
\begin{align}
    \theta=\pi(\vartheta)=\frac{1}{2}(\vartheta-\vartheta^{t}) \in \Omega^1(\GL_N(\R)^+ \times \R^{N})\otimes \kfrak.
\end{align}
The connection form defines a covariant derivative
\begin{align}
    \nabla \colon \Omega^{i,j} \longrightarrow \Omega^{i+1,j} 
\end{align}
by $\nabla (\omega \otimes \nu)=(d \omega) \otimes \nu + (-1)^i \omega \otimes (\theta \nu)$ where we view 
\begin{align}
    \theta \in \Omega^1(\GL_N(\R)^+ \times \R^{N})\otimes \kfrak \subset \Omega^1(\GL_N(\R)^+ \times \R^{N})\otimes \so( \wedge \R^{N}).
\end{align}

The connection is compatible with the metric induced by the bilinear form, in the sense that for two sections $s_1,s_2 \in \Omega^{0,j}=\Omega^0(E,\wedge^j E_{\taut})$ we have
\begin{align}
    d \langle s_1,s_2\rangle = \langle \nabla s_1, s_2 \rangle + \langle s_1,\nabla s_2 \rangle.
\end{align}
This can be seen from the fact that $\theta^t=-\theta$, which implies $\langle \theta s_1, s_2 \rangle + \langle s_1,\theta s_2 \rangle =0.  
$

If we apply the covariant derivative twice we get $ \nabla^2 (\omega \otimes  s) = \omega \otimes R s$ where 
\begin{align}
    R \in [\Omega^2(\GL_N(\R)^+ \times \R^{N}) \otimes \kfrak]^K
\end{align}
is the curvature form. We can identify $\kfrak$ with $\wedge^2\R^{N}$ by the map
\begin{align} \label{idenso}
    \kfrak \longrightarrow \wedge^2\R^{N}, \qquad
    X  \longmapsto &  -\frac{1}{2}\sum_{i,j} X_{ij}e_i \wedge e_j.
\end{align}

\subsubsection{The Mathai-Quillen form.} \label{subsubsec : mqform} Let us denote by $v \colon E \longrightarrow E_{\taut}$ the tautological section defined $v[g_0,v_0]=[g_0,v_0,v_0]$. It defines an element $v \in \Omega^{0,1}$. Hence, we get a form
\begin{align}
-2\pi \lVert v \rVert^2-2\sqrt{\pi}\nabla v +R \in \Omega^{0,0} \oplus \Omega^{1,1} \oplus \Omega^{2,2}.    
\end{align}
The Mathai-Quillen form is the rapidly decreasing form defined by
\begin{align}
    U \coloneqq\delta_N\int^B \exp \left (-2\pi \lVert v \rVert^2-2\sqrt{\pi}\nabla v +R  \right ) \in \Omega^{N}_\rd(E)^{\GL_N(\R)^+},
\end{align}
where $\delta_N \coloneqq (-1)^{\frac{N(N-1)}{2}}(2\pi)^{-\frac{N}{2}}$.
\begin{prop}\cite[Theorem.~4.10]{mq} \label{Uprop} The form $U$ is closed, $\GL_N(\R)^+$-invariant and of integral one along the fibers.
\end{prop}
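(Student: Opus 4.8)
The plan is to follow the Mathai--Quillen argument, establishing the three properties in increasing order of difficulty; only closedness is substantial. \emph{Invariance} is formal: the connection form $\theta=\pi(\vartheta)$ is built from the left-invariant Maurer--Cartan form $\vartheta=g^{-1}dg$, so the covariant derivative $\nabla$ and the curvature $R$ are $\GL_N(\R)^+$-invariant, and the tautological section $v$ is $\GL_N(\R)^+$-equivariant by construction; since the exponential and the Berezin integral on $\Omega^{\bullet,\bullet}$ are defined fibrewise they commute with the $\GL_N(\R)^+$-action, so $U$ is $\GL_N(\R)^+$-invariant. Running the same computation with $\Ad(k)$, $k\in K$, in place of the $\GL_N(\R)^+$-action and using $\pi\circ\Ad(k)=\Ad(k)\circ\pi$ shows the integrand is basic, hence descends from $\GL_N(\R)^+\times\R^N$ to $E$ (and then $\Gamma$-invariance gives descent to $E_\Gamma$). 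Rapid decrease in the fibre is immediate: expanding the exponential exhibits $U$ as $e^{-2\pi\lVert v\rVert^2}$ times a finite sum of polynomials in $v$ with differential-form coefficients, so $U\in\Omega^N_{\rd}(E)^{\GL_N(\R)^+}$.

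\emph{Unit fibre integral.} Restrict $U$ to a fibre $E_x\simeq\R^N$. Along $E_x$ all horizontal $1$-forms vanish, so $\nabla v$ restricts to the ordinary differential $\sum_i dv_i\otimes e_i\in\Omega^{1,1}$ and $R$ restricts to $0$; hence the integrand restricts to $e^{-2\pi\lVert v\rVert^2}\exp\!\big(-2\sqrt{\pi}\sum_i dv_i\otimes e_i\big)$. The only summand of this exponential with nonzero $e_1\wedge\cdots\wedge e_N$-component is $\tfrac{(-2\sqrt\pi)^N}{N!}\big(\sum_i dv_i\otimes e_i\big)^N$, whose Berezin integral is $(-2\sqrt\pi)^N\,dv_1\wedge\cdots\wedge dv_N$ up to an explicit reordering sign, so $U|_{E_x}$ equals $(-1)^{N(N-1)/2}(2\pi)^{-N/2}$ times this. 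Since $\int_{\R^N}e^{-2\pi\lVert v\rVert^2}\,dv_1\cdots dv_N=2^{-N/2}$, one obtains $\int_{E_x}U=1$; the prefactor $(-1)^{N(N-1)/2}(2\pi)^{-N/2}$ in the definition of $U$ is chosen precisely so that this value, sign included, comes out to $1$.

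\emph{Closedness} is the main obstacle, and is precisely the content of the Mathai--Quillen theorem. Its essential ingredients are: (i) the Bianchi identity $\nabla R=0$ and the curvature identity $\nabla^2 v=Rv$; (ii) the fact that $\int^B$ converts $\nabla$ into $d$, i.e.\ $\int^B\nabla\alpha=d\int^B\alpha$, because $\nabla$ preserves the $\wedge^\bullet$-grading and $\theta$ acts on the top piece $\wedge^N\R^N$ by multiplication by $\tr(\theta)=0$ (as $\theta$ is skew). Concretely I would repackage the argument by introducing, on the $\Z/2$-graded bundle $\wedge E_{\taut}$, an odd operator $\mathbf{A}=\nabla+\sqrt{2\pi}\,c(v)$, where $c(v)$ is a Clifford-type multiplication by the tautological section built from exterior multiplication $v\wedge{}$ and its $B$-adjoint contraction $\iota_v$; a direct computation — using $c(v)^2=\pm\lVert v\rVert^2$, $\{\nabla,c(v)\}=c(\nabla v)$ and $\nabla^2 v=Rv$, with $R$ viewed in $\Omega^{2,2}$ via \eqref{idenso} — identifies the exponent $-2\pi\lVert v\rVert^2-2\sqrt\pi\,\nabla v+R$ with $-\mathbf{A}^2$ for the appropriate conventions, so that $U=(-1)^{N(N-1)/2}(2\pi)^{-N/2}\int^B\exp(-\mathbf{A}^2)$. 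Then $\int^B$ behaves as a supertrace (it annihilates $\mathbf{A}$-commutators other than the $\nabla$-part, by (ii) and since $v\wedge{}$ and $\iota_v$ shift the exterior degree off $N$), giving $d\int^B\beta=\int^B[\mathbf{A},\beta]$ for even $\beta$; and $[\mathbf{A},\mathbf{A}^2]=0$ forces $[\mathbf{A},\exp(-\mathbf{A}^2)]=0$, whence $dU=0$. Equivalently, without superconnections: differentiating the exponent gives, by (i), only the leftover terms $-4\pi B(\nabla v,v)-2\sqrt\pi\,Rv$, and one checks directly that these are annihilated after Berezin integration. The only part demanding genuine care is the sign bookkeeping in the bigraded algebra $\Omega^{\bullet,\bullet}$ — the $(-1)^{jk}$ in its product and the combinatorics of Lemma \ref{splitlem}; everything else is formal. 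Pragmatically, one may instead cite Mathai--Quillen (or Berline--Getzler--Vergne) for closedness and invariance and treat the normalization computation above as the only point needing verification here.
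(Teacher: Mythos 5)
Your proof is correct and takes essentially the same route as the paper's, which itself delegates invariance and the unit fibre integral to the cited reference and establishes closedness exactly via the identity $d\int^B=\int^B\bigl(\nabla+2\sqrt{\pi}\,\iota(v)\bigr)$ together with the fact that this operator annihilates the exponent $-2\pi\lVert v\rVert^2-2\sqrt{\pi}\nabla v+R$. The one caveat is that your superconnection repackaging $U\propto\int^B\exp(-\mathbf{A}^2)$ does not literally hold as stated (with $c(v)=v\wedge{}-\iota_v$ the curvature term comes out with the wrong sign and $c(\nabla v)\neq\nabla v\wedge{}$), but since you also supply the direct version — the leftover terms $-4\pi B(\nabla v,v)$ and $-2\sqrt{\pi}Rv$ cancelling against $2\sqrt{\pi}\iota(v)$ applied to the exponent, then killed or converted by the Berezin integral — which is precisely the paper's argument, nothing essential is missing.
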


\subsection{The form $\varphi$} \label{sec:varphiform}
For $\vbf=(v,w)$ we define a section
\begin{align}
 s_\vbf \colon X  \longrightarrow E, \qquad z \longmapsto  s_\vbf(z) \coloneqq \left [g_z, \frac{g_z^{-1}v-g_z^tw}{\sqrt{2}} \right ]
\end{align}
where $g_z$ is such that $z=gg^t \in X$ (equivalently $z=g_z\SO(N)$ ). We define the form
\begin{align}
    \varphi^0(z,\vbf) \coloneqq s_\vbf^\ast U \in \Omega^N(X)\otimes C^\8(V_\R).
\end{align}

\subsubsection{Explicit formula.} Let $\lambda \coloneqq \frac{1}{2}(\vartheta+\vartheta^t) \in \Omega^1(\GL_N(\R)^+) \otimes \End(\R^{N})$ and let $\lambda_{ij} \in \Omega^1(\GL_N(\R)^+)$ be its $(i,j)$-entry, where we identify $\End(\R^{N}) \simeq \Mat_N(\R)$. For a subset $I=\{i_1 < \cdots <i_k\} \subset \{ 1, \dots, N \}$ and a function $\sigma \colon I \longrightarrow \{ 1, \dots, N \}$ we define the $\vert I \vert$-form
\begin{align}
    \lambda(\sigma) \coloneqq \lambda_{i_1\sigma(i_1)} \wedge  \cdots \wedge \lambda_{i_k\sigma(i_k)} \in \Omega^{\vert I \vert}(\GL_N(\R)^+)
\end{align}
and the generalized Hermite polynomial $H_\sigma \in \C[\R^{N}]$ by
\begin{align}
    H_\sigma(v)  \coloneqq \prod_{m=1}^NH_{d_m}(v_m)
\end{align}
where $d_m = \vert \sigma^{-1}(m) \vert$ and 
\begin{align}
H_d(t) \coloneqq \left (2t-\frac{d}{dt} \right )^d \cdot 1
\end{align} denotes the single variable Hermite polynomial.  This normalization of the Hermite polynomial is sometimes called the {\em physicist's Hermite polynomials} and the first few Hermite polynomials are: $H_1(t)=2t$, $H_2(t)=4t^2-2$, $H_3(t)=8t^3-12t$, ...

\begin{lem} \label{exphermite} Let $I \subset \{1,\dots,N\}$ be a subset. Then the $I$-th component is
\begin{align*}
\exp \left (-2\sqrt{\pi} \nabla s_\vbf+R \right )_I=(-1)^{\frac{\vert I \vert (\vert I \vert -1)}{2}}2^{-\frac{\vert I \vert }{2}}\sum_{\sigma}  H_\sigma\left (\sqrt{\pi}(g_z^{-1}v+g_z^{t}w) \right ) \otimes \lambda(\sigma),
\end{align*}
where the sum is over all functions $\sigma \colon I \longrightarrow \{1, \dots, N\}$.
\end{lem}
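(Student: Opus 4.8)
The plan is to compute the Berezinian-type expansion term by term, reducing everything to a single-variable calculation. First I would unwind the definitions: the section $s_\vbf$ sends $z=gg^t$ to $[g,x]$ with $x=\tfrac{1}{\sqrt 2}(g^{-1}v-g^tw)$, so viewed as a basic form on $\GL_N(\R)^+\times\R^N$ we have $s_\vbf^\ast v = x$ in $\Omega^{0,1}$, and the covariant derivative $\nabla$ acts by $\nabla(\omega\otimes\nu)=(d\omega)\otimes\nu+(-1)^i\omega\otimes(\theta\nu)$. The key preliminary computation is $\nabla s_\vbf = \nabla x$. Using $dg^{-1}=-g^{-1}(dg)g^{-1}$ one finds $d x = -\theta x + \lambda y$ where $y\coloneqq \tfrac{1}{\sqrt 2}(g^{-1}v+g^tw)$ and $\lambda=\tfrac12(\vartheta+\vartheta^t)$ is the symmetric part; hence $\nabla x = dx + \theta x = \lambda y$, a $\Omega^{1,1}$-element whose $i$-th component (under $\wedge\R^N\to\R^N$, $e_i\mapsto$ coefficient) is $\sum_j \lambda_{ij}y_j$. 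So $-2\sqrt\pi\,\nabla s_\vbf$ has $i$-th component $-2\sqrt\pi\sum_j\lambda_{ij}y_j$, and under the identification \eqref{idenso} the curvature $R$ contributes the $e_i\wedge e_j$-components $R_{ij}$.

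Next I would expand the exponential. Because $\Omega^{\bullet,\bullet}$ is bigraded-commutative with the sign rule $(\omega\otimes\nu)\wedge(\omega'\otimes\nu')=(-1)^{jk}(\omega\wedge\omega')\otimes(\nu\wedge\nu')$, and since $-2\sqrt\pi\,\nabla s_\vbf$ has bidegree $(1,1)$ while $R$ has bidegree $(2,2)$ (so $R$ is central and even), the exponential factors as $\exp(-2\sqrt\pi\,\nabla s_\vbf)\wedge\exp(R)$. Then one extracts the $\wedge^{|I|}\R^N$-part, i.e. the $e_I$-component. The combinatorics here is exactly that of the generating function for Hermite polynomials: each variable $v_m$ (or rather $y_m$) can be "hit" either by a factor $\nabla s_\vbf$ (contributing a $\lambda_{im}$ and lowering the wedge degree in the $m$-slot, but these $\lambda$'s wedge together to give $\lambda(\sigma)$) or by a pair from $R$ (contributing a quadratic term, which is precisely the $-\tfrac{d}{dt}$-type correction in the Hermite recursion). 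Grouping by the function $\sigma\colon I\to\{1,\dots,N\}$ recording which variable index each row $i\in I$ picks out, and collecting the curvature corrections, produces $\prod_m H_{d_m}(\sqrt\pi\, y_m\cdot\sqrt 2)$ where $d_m=|\sigma^{-1}(m)|$; note $\sqrt 2\, y = g^{-1}v+g^tw$, matching the stated argument $\sqrt\pi(g^{-1}v+g^tw)$. The overall constants $(-1)^{|I|(|I|-1)/2}$ and $2^{-|I|/2}$ come respectively from reordering $e_{i_1}\wedge\cdots\wedge e_{i_{|I|}}$ relative to the $\int^B$/$e_I$-pairing conventions (an $|I|$-fold wedge reversal) and from the $2^{-|I|/2}$ produced by the $|I|$ factors of the $\tfrac{1}{\sqrt 2}$ hidden in rewriting $y_m$, together with the $(-2\sqrt\pi)^{|I|}$ prefactor absorbed into the Hermite polynomials' leading terms.

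The cleanest way to organize the Hermite identification is to prove it first for $N=1$ and $I=\{1\}$ by a direct check, where the statement reduces to the classical generating-function identity $\exp(2\sqrt\pi\, y\, s - s^2 \cdot(\text{curvature scalar}))$ expanded in the odd variable $s=\lambda_{11}$ — but since $\lambda_{11}^2=0$ the curvature term is what carries the polynomial weight, and more honestly one should argue directly from the recursion $H_d=(2t-\tfrac{d}{dt})H_{d-1}$: the operator $\nabla+2\sqrt\pi\,\iota(v)$ that annihilates the exponent (cf. the proof of Proposition \ref{Uprop}) translates, on components, into exactly this Hermite-lowering/raising identity, so one can induct on $|I|$. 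The main obstacle I anticipate is purely bookkeeping: tracking the signs coming from the $(-1)^{jk}$ multiplication rule as one shuffles the $\lambda_{ij}$ past each other and past $R$, and matching them against the sign $(-1)^{|I|(|I|-1)/2}$ and against Lemma \ref{splitlem}'s $\epsilon(I,J\smallsetminus I)$ factors. A careful choice of ordering convention for the basis $e_I$ (say always increasing) and a disciplined use of Lemma \ref{splitlem} to reduce to the one-variable case should keep this under control; no genuinely new idea beyond the Hermite generating function is needed.
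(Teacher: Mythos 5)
Your overall strategy coincides with the paper's: compute $\nabla s_\vbf$ as a symmetric-part-of-Maurer--Cartan form times $\tfrac{1}{\sqrt2}(g^{-1}v+g^tw)$, factor the exponential, and read off the $e_I$-component via the Hermite generating function, organizing the combinatorics by the functions $\sigma$. Two points need repair, though. First, your preliminary identity $dx=-\theta x+\lambda y$ has the wrong sign: from $d(g^{-1})v=-\vartheta g^{-1}v$ and $d(g^t)w=\vartheta^t g^tw$ one gets $ds_\vbf=-\tfrac{1}{\sqrt2}(\vartheta g^{-1}v+\vartheta^tg^tw)$ and hence $\nabla s_\vbf=ds_\vbf+\theta s_\vbf=-\lambda\,\tfrac{g^{-1}v+g^tw}{\sqrt2}$, not $+\lambda y$. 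Since the exponent carries $-2\sqrt{\pi}\nabla s_\vbf$, your sign would produce $H_\sigma\bigl(-\sqrt{\pi}(g^{-1}v+g^{t}w)\bigr)$, i.e.\ an overall discrepancy of $(-1)^{|I|}$ against the stated formula (using $H_d(-t)=(-1)^dH_d(t)$).

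Second, and more substantively, your plan never computes the curvature in terms of $\lambda$. The step that makes the Hermite generating function appear is the identity $R=d\theta+\theta^2=-\lambda^2$, which under the identification $\kfrak\simeq\wedge^2\R^N$ becomes $R=-\tfrac12\sum_{m=1}^N\lambda_m^2$ with $\lambda_m=\sum_l\lambda_{lm}\otimes e_l$ --- the \emph{same} index $m$ that labels the terms of $\nabla s_\vbf=-\sum_m\tfrac{(g^{-1}v+g^tw)_m}{\sqrt2}\lambda_m$. Only then does the exponent split as a sum over $m$ of expressions in the single even element $\lambda_m$, so that the exponential factors into a product of one-variable generating functions $\exp(\sqrt{2\pi}\,t\,\lambda_m-\tfrac12\lambda_m^2)=\sum_d\tfrac{2^{-d/2}}{d!}H_d(\sqrt{\pi}t)\lambda_m^d$. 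Saying only that ``$R$ contributes the $e_i\wedge e_j$-components $R_{ij}$'' leaves exactly this alignment unjustified; without it the pairing of quadratic corrections with the right linear terms (your ``hit by a pair from $R$'') does not follow. The remaining bookkeeping you describe --- partitioning $I$ into the fibers $\sigma^{-1}(m)$ and tracking the Koszul sign $(-1)^{|I|(|I|-1)/2}$ from separating the $\lambda$-factors from the $e$-factors --- matches the paper's Lemma~\ref{splitlem}-style computation and is fine.
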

\begin{proof} For $I=\{1,\dots,N\}$ this is \cite[Lemma.~4.4]{rbrkmmq}.
Note that $d(g_z^{-1})g=-g_z^{-1}dg=-\vartheta$ and $d(g_z^{t})g_z^{-t}=\vartheta^t$. Hence
\begin{align}
    ds_\vbf=- \frac{\vartheta g_z^{-1}v+\vartheta^tg_z^{t}w}{\sqrt{2}}
\end{align}
and
\begin{align}
    \nabla s_\vbf = ds_\vbf+\theta s_\vbf = -\lambda\frac{g_z^{-1}v+g_z^tw}{\sqrt{2}}
\end{align}
where $\lambda=\frac{1}{2}(\vartheta+\vartheta^t)$. We write $(g_z^{-1}v+g_z^{t}w)=\sum_m(g_z^{-1}v+g_z^{t}w)_m\otimes e_m$ to get
\begin{align}
    \nabla s_\vbf= -\sum_{m=1}^N \frac{(g_z^{-1}v+g_z^{t}w)_m}{\sqrt{2}} \lambda_{m},
\end{align}
where
\begin{align}
    \lambda_m \coloneqq \sum_{l=1}^N \lambda_{lm} \otimes e_l \in \Omega^{1,1}
\end{align}
and $\lambda_{lm} = \frac{1}{2}(\vartheta_{lm}+\vartheta_{ml})$. The curvature is the $2$-form $ R \in [\Omega^2(\GL_N(\R)^+ \times \R^{N}) \otimes \kfrak]^K$ given by $R=d\theta+\theta^2$.
Note that
\begin{align}
    \theta^2 = \frac{1}{4}(\vartheta-\vartheta^t) (\vartheta-\vartheta^t) = \frac{1}{4}(\vartheta^2 -\vartheta \vartheta^t - \vartheta^t \vartheta + (\vartheta^t)^2)
\end{align} and
\begin{align}
    d \theta & = \frac{(d g_z^{-1})dg_z-(dg_z^t)(dg_z^{-t})}{2} = -\frac{\vartheta^2 + (\vartheta^t)^2}{2}.
\end{align} It follows that $R=- \lambda^2$. By the identification $\kfrak \simeq \wedge^2\R^{N}$ in \eqref{idenso}, we can see the curvature as an element $ R \in \Omega^{2,2}=[\Omega^2(\GL_N(\R)^+ \times \R^{N}) \otimes \wedge^2\R^{N})]^K$. We have
\begin{align}
 R =-\frac{1}{2}\sum_{k,l} R_{kl} e_k \wedge e_l  = - \frac{1}{2} \sum_{m=1}^N \lambda_m^2.
\end{align}

For $t\in \R$, we have
\begin{align}
\exp(\sqrt{2\pi}t\lambda_m-\lambda_m^2/2)=\sum_{d=0}^\8 \frac{2^{-d/2}}{d!}H_d(\sqrt{\pi}t)\lambda_m^d.
\end{align}
Following the computations after \cite[Lemma.~4.4]{rbrkmmq} we find that
\begin{align} \label{equationforexp}
 \exp \left (-2\sqrt{\pi} \nabla s_\vbf+R \right ) = \sum_{\substack{d_1, \dots,d_N \\ d_m \geq0}}\frac{2^{-(d_1+\cdots+d_N)/2}}{d_1!\cdots d_N!}\prod_{m=1}^NH_{d_m}(\sqrt{\pi}(g_z^{-1}v+g_z^tw)_m) \lambda_m^{d_m}.
\end{align}
The $I$-th component of $\lambda_1^{d_1} \wedge \cdots \wedge \lambda_N^{d_N}$ is only nonzero when $d_1+ \cdots + d_N=\vert I \vert$. In that case, we have
\begin{align}
(\lambda_1^{d_1} \wedge \cdots \wedge \lambda_N^{d_N})_I & = \left (\bigwedge_{m=1}^N \left ( \sum_{l=1}^N \lambda_{lm} \otimes e_l \right )^{d_m} \right )_I \\
& = \left ( \bigwedge_{m=1}^N d_m! \sum_{\substack{I_m \subset I \\ \vert I_m \vert =d_m}} (\lambda_{l_1m} \otimes e_{l_1}) \wedge \cdots \wedge (\lambda_{l_{d_m}m} \otimes e_{l_{d_m}})  \right )_I
\end{align}
where the sum is over subsets $I_m=\{l_1< \dots <l_{d_m}\} \subset I$ of size $d_m$ and the factorial comes from reordering the terms. Expanding the product shows that $(\lambda_1^{d_1} \wedge \cdots \wedge \lambda_N^{d_N})_I$ is equal to
\begin{align}
(-1)^{\frac{\vert I \vert(\vert I \vert-1)}{2}}\sum_{\substack{I_1, \dots,I_N \\ I=I_1 \cup \cdots \cup I_N}} d_1! \cdots d_N!  (\lambda_{I_1} \wedge \cdots \wedge \lambda_{I_N} ) \otimes ( e_{I_1} \wedge \cdots \wedge e_{I_N}).
\end{align}
To every partition of $I$ into subsets $I_1,\dots,I_N$ corresponds a unique function $\sigma \colon  I \longrightarrow \{1, \dots, N\}$ determined by $\sigma(i)=m$ if and only if $i \in I_{m}$. Note that $\vert \sigma^{-1}(m) \vert=\vert I_m \vert=d_m$. Recall that $\epsilon(I_1, \dots, I_N) = \pm 1$ is the sign that appears when we reorder
\begin{align}
e_{I_1} \wedge \cdots \wedge e_{I_N}=\epsilon(I_1, \dots, I_N) e_I.
\end{align}
The same sign appears when we reorder
\begin{align}
\lambda_{I_1} \wedge \cdots \wedge \lambda_{I_N}=\epsilon(I_1, \dots, I_N) \lambda(\sigma).
\end{align}
Hence, we conclude that
\begin{align}
(\lambda_1^{d_1} \wedge \cdots \wedge \lambda_N^{d_N})_I= (-1)^{\frac{\vert I \vert(\vert I \vert-1)}{2}}\sum_{\substack{\sigma \\ \vert \sigma^{-1}(m) \vert =d_m}} d_1! \cdots d_N! \lambda(\sigma),
\end{align}
where the sum is over all functions $\sigma$ such that $\vert \sigma^{-1}(m) \vert =d_m$ for all $m$. Since we sum over all nonnegative $d_m$ such that $d_1+ \cdots + d_N=\vert I \vert$, we have
\begin{align}
\sum_{\substack{d_1, \dots,d_N \\ d_m \geq0}}\sum_{\substack{\sigma \\ \vert \sigma^{-1}(m) \vert =d_m}}=\sum_{\sigma }.
\end{align}
\end{proof}
After multiplying with the exponential term and the constant $\delta_N$ we get the following closed formula for $\varphi^0$.
\begin{prop} \label{explicitphihat}
    The form is explicitly
    \begin{align}
    \varphi^0(z,\vbf)= 2^{-N}\pi^{-\frac{N}{2}}\sum_{\sigma}\varphi^0_\sigma(\rho_{g_z}^{-1}\vbf) \otimes \lambda(\sigma) \in \Omega^N(X) \otimes C^\8(V_\R)
\end{align}
where the sum is over functions $\sigma \colon \{ 1, \dots, N \} \longrightarrow \{ 1, \dots, N \}$
and
\begin{align}
    \varphi^0_\sigma(\vbf)\coloneqq H_\sigma\left (\sqrt{\pi}(v+w) \right ) e^{-\pi  \lVert v- w \rVert^2}.
\end{align}
\end{prop}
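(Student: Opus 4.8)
The plan is to combine, with no new ingredients, two facts that are already in place: the expression of $\varphi^0(z,\vbf)$ as a pulled-back Berezinian integral and the component computation of Lemma~\ref{exphermite}. Recall that right after Proposition~\ref{thomformprop2} it was observed that, since taking the Berezinian commutes with pullback and $\lVert s_\vbf(z)\rVert^2 = \frac{1}{2}\lVert g^{-1}v - g^t w\rVert^2$ for $z = gg^t$, one has
\begin{align*}
\varphi^0(z,\vbf) = \frac{(-1)^{\frac{N(N-1)}{2}}}{(2\pi)^{\frac N2}}\,\exp\!\left(-\pi\lVert g^{-1}v - g^t w\rVert^2\right)\int^B \exp\!\left(-2\sqrt{\pi}\,\nabla s_\vbf + R\right).
\end{align*}

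First I would note that, by definition, $\int^B$ is the projection onto the top component $e_1\wedge\cdots\wedge e_N$, that is, the component indexed by $I=\{1,\dots,N\}$. Feeding this $I$ into Lemma~\ref{exphermite} gives
\begin{align*}
\int^B \exp\!\left(-2\sqrt{\pi}\,\nabla s_\vbf + R\right) = (-1)^{\frac{N(N-1)}{2}}\,2^{-\frac N2}\sum_\sigma H_\sigma\!\left(\sqrt{\pi}\,(g^{-1}v + g^t w)\right)\otimes\lambda(\sigma),
\end{align*}
the sum running over all $\sigma\colon\{1,\dots,N\}\to\{1,\dots,N\}$. Then I would substitute this into the display above: the two factors $(-1)^{N(N-1)/2}$ cancel, the constants collapse to $(2\pi)^{-N/2}\,2^{-N/2} = 2^{-N}\pi^{-N/2}$, and it only remains to identify the Gaussian-times-Hermite factor with $\phi^0_\sigma(\rho_g^{-1}\vbf)$. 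For this I would use that $\rho_g^{-1}\vbf = (g^{-1}v, g^t w)$, together with the parity $H_d(-t) = (-1)^d H_d(t)$ of the single-variable Hermite polynomials, which turns $H_\sigma(\sqrt{\pi}\,(g^{-1}v + g^t w))$ into the $H_\sigma(-\sqrt{\pi}\,(v+w))$ occurring in the definition of $\phi^0_\sigma$ (each $H_\sigma$ being a product of Hermite polynomials of total degree $N$).

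Since Lemma~\ref{exphermite} already carries all the conceptual content, I do not expect any real obstacle: the argument is a one-line substitution followed by bookkeeping. The only points that need care are carrying the argument $\sqrt{\pi}\,(g^{-1}v + g^t w)$ of the Hermite polynomials correctly through the pullback, and being precise about the normalizing constants and sign conventions — the powers of $2$ and $\pi$, and the parity flip absorbed into the definition of $\phi^0_\sigma$.
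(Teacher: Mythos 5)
Your route is exactly the paper's: the ``proof'' of Proposition \ref{explicitphihat} in the text is literally the transitional sentence preceding it, namely one takes the $I=\{1,\dots,N\}$ case of Lemma \ref{exphermite} (which is precisely the Berezinian component), substitutes it into the displayed expression for $\varphi^0$ as a prefactor times $\int^B\exp(-2\sqrt{\pi}\nabla s_\vbf+R)$, and collects constants. Your bookkeeping $(2\pi)^{-N/2}\cdot 2^{-N/2}=2^{-N}\pi^{-N/2}$ and the cancellation of the two signs $(-1)^{N(N-1)/2}$ are correct, as is the identification $\rho_g^{-1}\vbf=(g^{-1}v,g^tw)$.

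The one step that does not go through as written is the final parity argument. Lemma \ref{exphermite} produces $H_\sigma\bigl(\sqrt{\pi}(g^{-1}v+g^tw)\bigr)$, whereas $\phi^0_\sigma(\rho_g^{-1}\vbf)$ as defined in the Proposition contains $H_\sigma\bigl(-\sqrt{\pi}(g^{-1}v+g^tw)\bigr)$. Since $H_\sigma=\prod_m H_{d_m}$ with $\sum_m d_m=N$ (a fact you yourself record), the relation $H_d(-t)=(-1)^dH_d(t)$ gives
\begin{align}
H_\sigma(\sqrt{\pi}x)=(-1)^N H_\sigma(-\sqrt{\pi}x),
\end{align}
so the parity flip is not free: it leaves an uncancelled global factor $(-1)^N$, and for odd $N$ your computation produces $-\phi^0_\sigma$ rather than $\phi^0_\sigma$. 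This discrepancy is in fact an internal inconsistency of the paper itself: the proof of Proposition \ref{phisigma} later computes the Fourier transform of $H_\sigma\bigl(\sqrt{\pi}(v+w)\bigr)e^{-\pi\lVert v\rVert^2-\pi\lVert w\rVert^2}$, i.e.\ with the argument coming out of Lemma \ref{exphermite}, not the one in the displayed definition of $\phi^0_\sigma$. The honest conclusion of your argument is therefore that the formula holds with $H_\sigma\bigl(+\sqrt{\pi}(v+w)\bigr)$ in the definition of $\phi^0_\sigma$, equivalently with an extra overall $(-1)^N$ relative to the statement; either way you must carry this sign explicitly rather than asserting that the parity relation converts one argument into the other at no cost.
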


The form $\varphi^0$ is not rapidly decreasing in the variable $\vbf \in V_\R$. To obtain a rapidly decreasing form, we define 
\begin{align}
\varphi \in \Omega^N(X) \otimes \Scal(V_\R)
\end{align} by  setting $\varphi(z,\vbf) \coloneqq e^{-2\pi Q(\vbf)}\varphi^0(z,\vbf).$
We have
\begin{align} 
\varphi(z,\vbf) 
& = 2^{-N}\pi^{-\frac{N}{2}}\sum_{\sigma}\varphi_\sigma(\rho_{g_z}^{-1}\vbf) \otimes \lambda(\sigma)
\end{align}
where
\begin{align}
    \varphi_\sigma(v,w) \coloneqq  \varphi^0_\sigma(v,w)e^{-2\pi Q(\vbf)} = H_\sigma\left (\sqrt{\pi}(v+w) \right ) e^{-\pi  \lVert v \rVert^2- \pi \lVert w \rVert^2}.
\end{align}
\subsubsection{Relation to the Kudla-Millson form} \label{relation kudla millson}
We refer to \cite{rbrkmmq} for more details on the construction of the Kudla-Millson form via the Mathai-Quillen formalism. Let $\D \simeq \SO(N,N)^+/\SO(N)\times \SO(N)$ be the Grassmannian of negative planes in the quadratic space $V_\R$ with the quadratic form $Q(\vbf)$ of signature $(N,N)$. The representation $\rho$ induces an embedding
\begin{align}
\rho \colon X \hooklongrightarrow \D, \quad g \longmapsto \begin{pmatrix}
g & 0 \\ 0& g^{-t}
\end{pmatrix}.
\end{align}
The Kudla-Millson form is an $N$-form
\begin{align}
\varphi_{\KM} \in \Omega^{N}(\D) \otimes \Scal(V_\R).
\end{align}
Let $E'$ be the tautological rank $N$ bundle $E'=\SO(N,N)^+\times_{\SO(N)^2}\R^N$ over $\D$, and let $U' \in \Omega^N(E')$ be the Thom form. By \cite{rbrkmmq}, there is a section $\widetilde{s}_\vbf \colon \D \longrightarrow E'$ such that $\varphi_{\KM}(\vbf)=\widetilde{s}_\vbf^\ast U'$. Moreover, the pullback of $E'$ by $\rho$ is precisely $E$ and the following diagram is commutative:
\begin{equation}
\begin{tikzcd}[column sep=large,row sep=large]
E \arrow[r] & E'  \\
X \arrow[u, "s_\vbf"] \arrow[r, swap, "\rho"] & \D \arrow[u, swap, "\widetilde{s}_\vbf"].
\end{tikzcd}
\end{equation}
Thus, we have
\begin{align}
\varphi=\rho^\ast \varphi_{\KM}.
\end{align}

\subsection{The form $\psi$}\label{sec:psiform}

We have identified $X \simeq S \times \R_{>0}$ as a bundle over $S$ by the map
\begin{align}
    X \longrightarrow S \times \R_{>0}, \qquad z \longmapsto \left ( p(z), u\right )
\end{align}
where $p(z) \coloneqq \det(z)^{-\frac{1}{N}} z$ and $u\coloneqq \det(z)^{\frac{1}{N}}$. A form $\eta(z) \in \Omega^N(X)$ can be written
\begin{align}
\eta(z)=\etatil(z) + (-1)^{N-1} \left ( \iotau\eta(z) \right )\frac{du}{u}
\end{align}
where $\etatil(z) \in \Omega^N(X)$ and  $\iotau \eta(z) \in \Omega^{N-1}(X)$ are of degree $0$ along $\R_{>0}$.
 
As in section \ref{subsec: thom iso}, the integration along the fibers $\R_{>0}$ of the fiber bundle $p \colon X \longrightarrow S$ induces the pushforward
\begin{align}
  p_\ast \colon \Omega_{\rd}^N(X) \longrightarrow \Omega^{N-1}(S), \qquad
     \eta \longmapsto p_\ast \eta \coloneqq (-1)^{N-1}\int_0^\8 \iotau \eta \frac{du}{u},
\end{align}
where $\Omega_{\rd}^N(X)$ is the space of forms that are rapidly decreasing as $u$ goes to $0$ and $\8$. Note that the bundle can be seen as a vector bundle after identifying $\R_{>0} \simeq \R$ via the exponential map. The pushforward is then as in \cite[p.~61]{botu}. 
Let $\psi^0(z,\vbf)$ be the form
\begin{align} 
\psi^0(z,\vbf) \coloneqq (-1)^{N-1}\iota_{u\frac{\partial}{\partial u}}\varphi^0(z,\vbf)  \in \Omega^{N-1}(X) \otimes C^\8(V_\R)
\end{align}
and $\psi(z,\vbf) \coloneqq e^{-2\pi Q(\vbf)}\psi^0(z,\vbf)$ the contraction of $\varphi$. The form $\varphi(z,\vbf)$ splits as
\begin{align}   \varphi(z,\vbf)= \vptil(z,\vbf)+\psi(z,\vbf) \frac{du}{u}.
\end{align}
Moreover, since the form $\varphi$ is closed, by taking the derivative on both sides we find the transgression formula
\begin{align} \label{transgression1}
    d_1\psi(z,\vbf)= u\frac{\partial}{\partial u} \vptil(z,\vbf) \in \Omega^{N}(X)\otimes\Scal(V_\R)
\end{align}
where $d_1$ is the derivative on $S$.

\subsubsection{Explicit formula of $\psi^0$.} To compute this form, note that we can write the connection $\vartheta=g^{-1}dg$ in the coordinates $(g_1,u)$ of $\SL_N(\R) \times \R_{>0}$ where $g=ug_1$. We have
\begin{align}
    g^{-1}dg^{-1}=u^{-1}g_1^{-1}d(g_1u)=\vartheta_1+\frac{du}{u}
\end{align}
where $\vartheta_1=g_1^{-1}dg_1$. If we set $\lambda^{(1)} \coloneqq \frac{1}{2} \left ( \vartheta_1+\vartheta^t_1 \right ) \in \Omega^1(\SL_N(\R)) \otimes \End(\R^{N})$ then
\begin{align} \label{lambda1u}
    \lambda=\lambda^{(1)}+\frac{du}{u}.
\end{align}
By taking the product we can then write
\begin{align}
\lambda(\sigma)=\lambda^{(1)}(\sigma)+(-1)^{N-1}\left (\iota_{u\frac{\partial}{\partial u}}\lambda(\sigma) \right )\frac{du}{u}
\end{align}
where $\iota_{u\frac{\partial}{\partial u}}\lambda(\sigma) \in \Omega^{N-1}(S)$.

\begin{prop} \label{explicitpsi}
    The form  $\psi^0(z,\vbf)  \in \Omega^{N-1}(X)\otimes\Scal(V_\R)$ is equal to
\begin{align}
 \psi^0(z,\vbf) & =(-1)^{N-1} 2^{-N}\pi^{-\frac{N}{2}} \sum_{\sigma}\varphi^0_\sigma(\rho_{g_z}^{-1}\vbf) \otimes \left (\iota_{u\frac{\partial}{\partial u}}\lambda(\sigma) \right ).
\end{align}
\end{prop}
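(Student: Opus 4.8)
The plan is to read the formula off directly from the explicit expression for $\varphi^0$ established in Proposition \ref{explicitphihat}, using that $\psi^0$ is the $\tfrac{du}{u}$-component of $\varphi^0$ with respect to the product decomposition $X \simeq S \times \R_{>0}$; equivalently $\psi^0(z,\vbf) = (-1)^{N-1}\iota_{u\frac{\partial}{\partial u}}\varphi^0(z,\vbf)$, exactly as $\psi = (-1)^{N-1}\iota_{u\frac{\partial}{\partial u}}\varphi$.

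First I would recall from Proposition \ref{explicitphihat} that
\[
\varphi^0(z,\vbf) = 2^{-N}\pi^{-\frac{N}{2}}\sum_\sigma \phi^0_\sigma(\rho_g^{-1}\vbf)\otimes\lambda(\sigma), \qquad z = gg^t,
\]
the sum running over all maps $\sigma\colon\{1,\dots,N\}\to\{1,\dots,N\}$. Since each coefficient $\phi^0_\sigma(\rho_g^{-1}\vbf)$ is a function, hence a $0$-form carrying no $du$, and contraction along a vector field is $C^\infty(X)$-linear in the form argument, applying $\iota_{u\frac{\partial}{\partial u}}$ to the sum reduces to contracting the matrix-coefficient wedges $\lambda(\sigma)$ alone. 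I would then invoke the splitting recorded just before the statement: writing $g = u g_1$ with $g_1 \in \SL_N(\R)$, one has $\lambda_{ij} = \lambda^{(1)}_{ij} + \delta_{ij}\,\tfrac{du}{u}$, so expanding $\lambda(\sigma) = \lambda_{1\sigma(1)} \wedge \cdots \wedge \lambda_{N\sigma(N)}$ and collecting the terms that contain $\tfrac{du}{u}$ gives
\[
\lambda(\sigma) = \lambda^{(1)}(\sigma) + (-1)^{N-1}\left(\iota_{u\frac{\partial}{\partial u}}\lambda(\sigma)\right)\tfrac{du}{u}.
\]
Substituting this back into the expression for $\varphi^0$ and reading off the coefficient of $\tfrac{du}{u}$ produces exactly the asserted formula for $\psi^0$.

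There is no real obstacle here: the argument is essentially a one-line substitution once Proposition \ref{explicitphihat} and the decomposition of $\lambda(\sigma)$ are available. The only points needing a moment's care are the signs coming from the contraction operator and from reordering the wedge product of the $\lambda_{ij}$'s --- these are already accounted for in the discussion preceding the statement --- and the harmless observation that the scalar factors $\phi^0_\sigma(\rho_g^{-1}\vbf)$ are inert under $\iota_{u\frac{\partial}{\partial u}}$ since they contribute no $du$.
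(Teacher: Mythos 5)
Your proposal is correct and follows exactly the route the paper takes: the paper's "proof" of this proposition is precisely the preceding discussion establishing $\lambda=\lambda^{(1)}+\tfrac{du}{u}$ and hence $\lambda(\sigma)=\lambda^{(1)}(\sigma)+(-1)^{N-1}\bigl(\iota_{u\frac{\partial}{\partial u}}\lambda(\sigma)\bigr)\tfrac{du}{u}$, after which one substitutes into the formula of Proposition \ref{explicitphihat} and reads off the $\tfrac{du}{u}$-component. Your observation that the scalar coefficients $\phi^0_\sigma(\rho_g^{-1}\vbf)$ are inert under the contraction is the only point requiring care, and you handle it correctly.
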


From the proof of Lemma \ref{exphermite}, we have
\begin{align}
\varphi(z,\vbf) & =\delta_Ne^{-\pi \lVert g_z^{-1}v\rVert^2-\lVert g_z^{t}w \rVert^2 }\int^B\exp \left (\sqrt{2\pi} \lambda(g_z^{-1}v+g_z^tw)+R \right ).
\end{align}
By writing $\lambda=\lambda^{(1)}+\frac{du}{u}$ as in \eqref{lambda1u}, the Berezinian is equal to
\begin{align}
\int^B  \exp \left (\sqrt{2\pi} \frac{du}{u}(g_z^{-1}v+g_z^tw) \right ) \exp \left (\sqrt{2\pi} \lambda^{(1)}(g_z^{-1}v+g_z^tw)+R \right ) .
\end{align}
The first term is equal to 
\begin{align}
\exp \left (\sqrt{2\pi} \frac{du}{u}(g_z^{-1}v+g_z^tw) \right )=1+ \sqrt{2\pi}\sum_{l=1}^N (g_z^{-1}v+g_z^tw)_l \left (\frac{du}{u} \otimes e_l \right ).
\end{align}
Thus, the $\frac{du}{u}$-component of the Berezinian is
\begin{align}
\sqrt{2\pi}\sum_{l=1}^N (g_z^{-1}v+g_z^tw)_l \int^B\left (\frac{du}{u} \otimes e_l \right ) \wedge \exp \left (\sqrt{2\pi} \lambda^{(1)}(g_z^{-1}v+g_z^tw)+R \right ) .
\end{align}
Using\footnote{Note that here $\epsilon(\{l\},\{1,\dots,N\}\smallsetminus\{l\})=(-1)^{l-1}$ and $(-1)^{k \vert \{l\} \vert } =(-1)^{N-1}$.}  Lemma \ref{splitlem} for $\alpha=\frac{du}{u} \otimes e_l$ and $\beta=\exp \left (\sqrt{2\pi} \lambda^{(1)}(g_z^{-1}v+g_z^tw)+R \right )$ we find that the Berezinian in the last equation is
\begin{align}
\sum_{l=1}^N (-1)^{l-1}  \exp \left (\sqrt{2\pi} \lambda^{(1)}(g_z^{-1}v+g_z^tw)+R \right )_{I_{\widehat{l}}} \frac{du}{u}.
\end{align}
where $I_{\widehat{l}} \coloneqq I_{\{1,\dots,N\}\smallsetminus\{l\}}$. We deduce the following.

\begin{prop} \label{explicitpsi2}
The form $\psi^0(z,\vbf)$ is also equal to
\begin{align}
(-1)^{N-1}\delta_{N-1}e^{-\pi \lVert g_z^{-1}v\rVert^2-\lVert g_z^{t}w \rVert^2 } \sum_{l=1}^N (g_z^{-1}v+g_z^tw)_l \exp \left (\sqrt{2\pi} \lambda^{(1)}(g_z^{-1}v+g_z^tw)+R \right )_{I_{\widehat{l}}}.
\end{align}
In particular, we have $\psi(z,0)=0$.
\end{prop}

\subsection{The transgression form} \label{transgression}
We have seen that the form $\psi$ already satisfies the transgression formula \eqref{transgression1} with respect to $\R_{>0}$ acting on $\vbf$ by $\rho$. We will need a transgression form with respect to the action of $\R_{>0}$ given by rescaling the vector. We define
\begin{align}
\Omega_t \coloneqq -2\pi t \lVert v \rVert^2-2\sqrt{\pi}\sqrt{t}\nabla v +R
\end{align}
for $t>0$, and the Thom form
\begin{align}
U_t = \delta_N \int^B\exp(\Omega_t) \in \Omega^N_{\rd}(E)^{\GL_N(\R)^+}
\end{align}
obtained by rescaling by $\sqrt{t}$ in the fiber of $E$. Note that $U_1=U$.
Let $\Util_t$ be the form defined by
\begin{align}
\Util_t\coloneqq -(-1)^{\frac{N(N-1)}{2}}2^{-\frac{N}{2}}\pi^{-\frac{N-1}{2}} \int^B v \wedge \exp(\Omega_t) \in  \Omega^{N-1}_{\rd}(E)^{\GL_N(\R)^+}
\end{align}
where $v \colon E \rightarrow E_{\taut}$ is the tautological section as in section \ref{subsubsec : mqform}.
\begin{prop} \label{prop transgression}
We have 
\begin{align}
\frac{\partial U_t}{\partial t}=\frac{1}{\sqrt{t}}d\Util_t.
\end{align}
In particular, the Thom form $U$ is exact outside of the zero section.
\end{prop}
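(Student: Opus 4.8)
The plan is to differentiate $U_t$ in $t$ and recognize the outcome as $\tfrac{1}{\sqrt t}$ times an exact form, via the standard Mathai--Quillen transgression mechanism built on the scaled operator $D_t \coloneqq \nabla + 2\sqrt{\pi t}\,\iota(v)$, which annihilates $\Omega_t$. (This is exactly the identity used in the proof of Proposition \ref{Uprop}, now with the fibrewise scaling $v \mapsto \sqrt t\,v$ kept track of: replacing $v$ by $\sqrt t\, v$ in $-2\pi\lVert v \rVert^2 - 2\sqrt{\pi}\nabla v + R$ gives $\Omega_t$, and the operator $\nabla + 2\sqrt{\pi}\iota(v)$ is transported to $D_t$.)

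First I would observe that in the bigraded superalgebra $\Omega^{\bullet,\bullet}$ the element $\Omega_t = -2\pi t \lVert v \rVert^2 - 2\sqrt{\pi t}\,\nabla v + R$ is of even total degree, hence commutes with itself and with $\tfrac{\partial \Omega_t}{\partial t} = -2\pi \lVert v \rVert^2 - \tfrac{\sqrt\pi}{\sqrt t}\,\nabla v$. Consequently $\tfrac{\partial}{\partial t}\exp(\Omega_t) = \bigl(\tfrac{\partial \Omega_t}{\partial t}\bigr)\exp(\Omega_t)$, and writing $c \coloneqq (-1)^{\frac{N(N-1)}{2}}(2\pi)^{-N/2}$ this yields
\[
\frac{\partial U_t}{\partial t} = c \int^B \Bigl(\tfrac{\partial \Omega_t}{\partial t}\Bigr)\exp(\Omega_t).
\]

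Next, since $D_t$ is an odd derivation of $\Omega^{\bullet,\bullet}$ and $D_t\Omega_t = 0$, also $D_t\exp(\Omega_t) = 0$; as $v \in \Omega^{0,1}$ is odd and $\iota(v)v = \lVert v \rVert^2$, I would compute
\[
D_t\bigl(v \wedge \exp(\Omega_t)\bigr) = (D_t v)\wedge\exp(\Omega_t) = \bigl(\nabla v + 2\sqrt{\pi t}\,\lVert v \rVert^2\bigr)\wedge\exp(\Omega_t),
\]
and then check the elementary identity $\bigl(\tfrac{\partial \Omega_t}{\partial t}\bigr)\exp(\Omega_t) = -\tfrac{\sqrt\pi}{\sqrt t}\,D_t\bigl(v \wedge \exp(\Omega_t)\bigr)$ (using $\tfrac{\sqrt\pi}{\sqrt t}\cdot 2\sqrt{\pi t} = 2\pi$). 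Applying $\int^B$, the contraction part $2\sqrt{\pi t}\,\iota(v)$ of $D_t$ contributes $\int^B \iota(v)(\,\cdot\,)$, which vanishes since $\bigwedge^{N+1}\R^N = 0$, while $\int^B \nabla(\,\cdot\,) = d\int^B(\,\cdot\,)$ exactly as in the proof of Proposition \ref{Uprop}. Hence
\[
\frac{\partial U_t}{\partial t} = -c\,\frac{\sqrt\pi}{\sqrt t}\, d\!\int^B v \wedge \exp(\Omega_t),
\]
and since $-c\sqrt\pi = -(-1)^{\frac{N(N-1)}{2}}2^{-N/2}\pi^{-(N-1)/2}$ is precisely the constant in the definition of $\Util_t$, the right-hand side equals $\tfrac{1}{\sqrt t}\,d\Util_t$.

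The main difficulty is bookkeeping rather than conceptual: one must track the sign conventions of the multiplication on $\Omega^{\bullet,\bullet}$ and the parities of $v$, $\nabla v$, $R$ carefully, so that $\exp(\Omega_t)$ genuinely behaves like the exponential of an even element, $\tfrac{d}{dt}\exp(\Omega_t)$ takes the expected form, and $D_t$ acts on $v \wedge \exp(\Omega_t)$ as a signed derivation; and one should confirm that the $\int^B$-manipulations (commuting with $d$ up to sign, annihilating contractions) apply to the basic forms in play. All of these are the identities already established around Proposition \ref{Uprop}, so no new ingredient is required; alternatively one may simply invoke \cite{mq}, where this transgression formula is proved in general.
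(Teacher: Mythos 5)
Your proof is correct and is essentially the paper's own argument: both differentiate $\exp(\Omega_t)$, identify $\tfrac{\partial\Omega_t}{\partial t}\exp(\Omega_t)$ as $-\tfrac{\sqrt{\pi}}{\sqrt{t}}(\nabla+2\sqrt{\pi t}\,\iota(v))(v\wedge\exp(\Omega_t))$ using that this odd operator annihilates $\exp(\Omega_t)$, and then apply $\int^B$ so that the contraction term dies and $\nabla$ becomes $d$. The constant check matching $-c\sqrt{\pi}$ with the normalization of $\Util_t$ is also exactly as in the paper.
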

\begin{proof} This proposition is \cite[Proposition.~1.53]{berline} with a slightly different normalization. We have
\begin{align}
\frac{\partial}{\partial t}\exp(\Omega_t)=\left (\frac{\partial \Omega_t}{\partial t} \right )\exp(\Omega_t)
\end{align}
and
\begin{align}
\frac{\partial \Omega_t}{\partial t}= -\frac{\sqrt{\pi}}{\sqrt{t}} \left ( \nabla+2\sqrt{\pi}\sqrt{t}\iota(v) \right ) v.
\end{align}
We have $\left ( \nabla+2\sqrt{\pi}\sqrt{t}\iota(v) \right )\exp(\Omega_t)=0$, hence
\begin{align}
d\int^B v \wedge  \exp(\Omega_t)& = \int^B \left ( \nabla+2\sqrt{\pi}\sqrt{t}\iota(v) \right ) \left ( v \wedge \exp(\Omega_t)\right ) \nonumber \\
& = -\frac{\sqrt{t}}{\sqrt{\pi}}\int^B \frac{\partial}{\partial t}\exp(\Omega_t) \nonumber \\
& = -\frac{\sqrt{t}}{\sqrt{\pi}}\frac{\partial}{\partial t} \int^B\exp(\Omega_t).
\end{align}
The result follows after multiplying by $\delta_N$ on both sides. 

Finally, the Thom form is exact on $E \smallsetminus E_0$ since
\begin{align}
U= d \int_1^\8 \Util_t \frac{dt}{\sqrt{t}},
\end{align}
where the integral converges by the rapid decrease (away from the zero section where $v=0$). Note that the right-handside is essentially the form $\tau$ in \cite[p.~106]{mq}. 
\end{proof}
Let $s_\vbf \colon X \longrightarrow E$ be the section considered previously. Let $\Util \coloneqq \Util_1$ and define $\alpha^0(z,\vbf) \coloneqq s_\vbf^\ast \Util \in \Omega^{N-1}(X) \otimes C^\8(V_\R)$. We have
\begin{align}
\alpha^0(z,\vbf) = -(-1)^{\frac{N(N-1)}{2}}2^{-\frac{N}{2}}\pi^{-\frac{N-1}{2}} \int^B s_\vbf(z) \wedge \exp(s_\vbf^\ast \Omega_1).
\end{align}
\begin{prop} For any real number $t>0$ we have
    \begin{align} \label{alphatilprop}
        d\alpha^0(z,\sqrt{t}\vbf)= t\frac{\partial}{\partial t}\varphi^0(z,\sqrt{t}\vbf).
    \end{align}
\end{prop}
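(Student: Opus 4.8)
The plan is to reduce the claim to the transgression identity $\frac{\partial U_t}{\partial t}=\frac{1}{\sqrt t}\,d\Util_t$ of Proposition \ref{prop transgression}, by pulling everything back along the section $s_\vbf$ and keeping track of the rescaling. Fix $t>0$ and let $m_c\colon E\to E$ denote the fiberwise multiplication $[g_0,v_0]\mapsto[g_0,cv_0]$; recall that $U_t$ is by definition $m_{\sqrt t}^\ast U$. From the defining formula $s_\vbf(z)=\bigl[g,(g^{-1}v-g^tw)/\sqrt 2\bigr]$, which is $\R$-linear in $\vbf$, one has $s_{\sqrt t\,\vbf}=m_{\sqrt t}\circ s_\vbf$.

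The first step is to record how the Mathai-Quillen ingredients transform under $m_c$. Since $v$ is the tautological section, $m_c^\ast v=c\,v$; the connection $\theta$ (hence the covariant derivative $\nabla$) and the curvature $R$ are pulled back from $X$, so $m_c^\ast R=R$ and $m_c^\ast(\nabla v)=c\,\nabla v$; and $m_c^\ast\lVert v\rVert^2=c^2\lVert v\rVert^2$. Hence $m_{\sqrt t}^\ast\Omega_1=\Omega_t$, and since the exponential and the Berezinian integral commute with pullback, the definitions of $U=U_1$ and $\Util=\Util_1$ give
\begin{align*}
m_{\sqrt t}^\ast U=U_t, \qquad m_{\sqrt t}^\ast\Util=\sqrt t\;\Util_t,
\end{align*}
the factor $\sqrt t$ in the second identity coming from the single copy of the tautological section $v$ occurring in the integrand $v\wedge\exp(\Omega_1)$ of $\Util$. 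Applying $s_\vbf^\ast$ and using $s_{\sqrt t\,\vbf}=m_{\sqrt t}\circ s_\vbf$ then yields
\begin{align*}
\varphi^0(z,\sqrt t\,\vbf)=s_\vbf^\ast U_t, \qquad \alpha^0(z,\sqrt t\,\vbf)=\sqrt t\;s_\vbf^\ast\Util_t.
\end{align*}

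Finally I would conclude. Since the exterior derivative $d$ on $X$ commutes with $s_\vbf^\ast$ and with the $z$-independent scalar $\sqrt t$, Proposition \ref{prop transgression} gives
\begin{align*}
d\,\alpha^0(z,\sqrt t\,\vbf)=\sqrt t\;s_\vbf^\ast(d\Util_t)=\sqrt t\;s_\vbf^\ast\!\left(\sqrt t\,\frac{\partial U_t}{\partial t}\right)=t\;s_\vbf^\ast\frac{\partial U_t}{\partial t},
\end{align*}
whereas $\frac{\partial}{\partial t}\varphi^0(z,\sqrt t\,\vbf)=\frac{\partial}{\partial t}\bigl(s_\vbf^\ast U_t\bigr)=s_\vbf^\ast\frac{\partial U_t}{\partial t}$ because $s_\vbf^\ast$ does not depend on $t$; hence $t\,\frac{\partial}{\partial t}\varphi^0(z,\sqrt t\,\vbf)$ equals the right-hand side above, which is the claimed identity. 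There is no real obstacle in this argument; the one point requiring care is the bookkeeping of the powers of $\sqrt t$, in particular the fact that $\alpha^0(z,\sqrt t\,\vbf)$ equals $\sqrt t$ times $s_\vbf^\ast\Util_t$ and not $s_\vbf^\ast\Util_t$ itself, since this is precisely the factor that turns $\frac{1}{\sqrt t}\,d\Util_t$ into $t\,\frac{\partial}{\partial t}$.
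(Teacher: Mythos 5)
Your argument is correct and is essentially the paper's own proof: both rest on the identities $\varphi^0(z,\sqrt t\,\vbf)=s_\vbf^\ast U_t$ and $\alpha^0(z,\sqrt t\,\vbf)=\sqrt t\,s_\vbf^\ast\Util_t$ (the paper phrases the latter as $s_\vbf^\ast\Util_t=\tfrac{1}{\sqrt t}\,\alpha^0(z,\sqrt t\,\vbf)$ via $s_\vbf^\ast\Omega_t=s_{\sqrt t\vbf}^\ast\Omega_1$, you via the fiberwise scaling $m_{\sqrt t}$), followed by Proposition \ref{prop transgression}. The bookkeeping of the powers of $\sqrt t$, which you rightly flag as the only delicate point, is handled correctly.
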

\begin{proof} 
Since $s_\vbf^\ast \Omega_t=s_{\sqrt{t}\vbf}^\ast \Omega_1$,  we have $s_\vbf^\ast U_t=s_{\sqrt{t}\vbf}^\ast U=\varphi^0(z,\sqrt{t}\vbf)$. On the other hand, we have
\begin{align}
s_\vbf^\ast \Util_t & =  -(-1)^{\frac{N(N-1)}{2}}2^{-\frac{N}{2}}\pi^{-\frac{N-1}{2}} \int^B s_\vbf(z)\wedge \exp(s_\vbf^\ast \Omega_t) \nonumber \\
& = \frac{1}{\sqrt{t}} s_{\sqrt{t}\vbf}^\ast \Util \nonumber \\
& = \frac{1}{\sqrt{t}} \alpha^0(z,\sqrt{t}\vbf).
\end{align}
Hence, we have
\begin{align}
d\alpha^0(z,\sqrt{t}\vbf)=\sqrt{t}d(s_\vbf^\ast \Util_t)=t\frac{\partial}{\partial t}s_\vbf^\ast U_t=t\frac{\partial}{\partial t} \varphi^0(z,\sqrt{t}\vbf).
\end{align}
\end{proof}

\subsubsection{Explicit formula for $\alpha^0$.}  For $l \in \{1, \dots,N\}$ and a function 
\begin{align}
\sigma_l \colon I_{\widehat{l}} \longrightarrow  \{1, \dots,N\}
\end{align} 
we define a smooth function on $V_\R$
\begin{align}
\varphi^0_{\sigma_l}(\vbf) & \coloneqq (v_l-w_l) H_{\sigma_l}(\sqrt{\pi}(v+w)) e^{-\pi  \lVert v-w \rVert^2 } \nonumber \\
& = (v_l-w_l)\prod_{m=1}^N H_{\vert \sigma_l^{-1}(m) \vert}\left (\sqrt{\pi}(v_m+w_m) \right ) e^{-\pi  \lvert v_m- w_m \rvert^2}.
\end{align}
\begin{prop} \label{explicitalphatil}
    The form $\alpha^0(z,\vbf)\in \Omega^{N-1}(X) \otimes C^\8(V_\R)$ is equal to
    \begin{align}
    \alpha^0(z,\vbf)=-2^{-N}\pi^{-\frac{N-1}{2}} \sum_{l=1}^N \sum_{\sigma_l}(-1)^{l-1} \varphi^0_{\sigma_l}(\rho_{g_z}^{-1}\vbf) \otimes \lambda(\sigma_l),
\end{align}
where the inner sum is over functions $\sigma_l \colon I_{\widehat{l}} \longrightarrow  \{1, \dots,N\}$.
\end{prop}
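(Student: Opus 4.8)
The plan is to mimic the computation of $\varphi^0$ carried out in Lemma~\ref{exphermite} and Proposition~\ref{explicitphihat}, accounting for the single extra factor of the tautological section $v\colon E\to E_{\taut}$ built into $\Util$. Pulling back along $s_\vbf$, we have $s_\vbf^\ast v=\tfrac{1}{\sqrt2}(g^{-1}v-g^tw)=\tfrac{1}{\sqrt2}\sum_{l=1}^N(g^{-1}v-g^tw)_l\otimes e_l\in\Omega^{0,1}$, and, exactly as in the proof of Lemma~\ref{exphermite},
\begin{align*}
\exp(s_\vbf^\ast\Omega_1)=\exp\!\left(-\pi\lVert g^{-1}v-g^tw\rVert^2\right)\exp\!\left(-2\sqrt\pi\,\nabla s_\vbf+R\right),
\end{align*}
where the Gaussian factor is a scalar and comes out of the Berezinian integral. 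Hence $\alpha^0(z,\vbf)$ equals $-(-1)^{N(N-1)/2}2^{-N/2}\pi^{-(N-1)/2}e^{-\pi\lVert g^{-1}v-g^tw\rVert^2}$ times $\int^B s_\vbf^\ast v\wedge\exp(-2\sqrt\pi\,\nabla s_\vbf+R)$, and it remains to identify the $e_1\wedge\cdots\wedge e_N$-component inside the Berezinian.

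First I would apply Lemma~\ref{splitlem} with $\alpha=s_\vbf^\ast v$ of form-degree $0$ and $\beta$ the degree-$(N-1)$ part of $\exp(-2\sqrt\pi\,\nabla s_\vbf+R)$ (the only part that contributes, since $s_\vbf^\ast v$ has exterior degree $1$). Since $(s_\vbf^\ast v)_I=0$ unless $|I|=1$, only the subsets $I=\{l\}$ survive, and the lemma produces $\sum_{l=1}^N(-1)^{l-1}(-1)^{N-1}\tfrac{(g^{-1}v-g^tw)_l}{\sqrt2}\,\beta_{\{1,\dots,N\}\smallsetminus\{l\}}$, using $\epsilon(\{l\},\{1,\dots,N\}\smallsetminus\{l\})=(-1)^{l-1}$ and that $\beta_{\{1,\dots,N\}\smallsetminus\{l\}}$ has form-degree $N-1$. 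Then I would expand each component $\beta_{\{1,\dots,N\}\smallsetminus\{l\}}$ via Lemma~\ref{exphermite} with $I=\{1,\dots,N\}\smallsetminus\{l\}$, obtaining $(-1)^{(N-1)(N-2)/2}2^{-(N-1)/2}\sum_{\sigma_l}H_{\sigma_l}(\sqrt\pi(g^{-1}v+g^tw))\otimes\lambda(\sigma_l)$, the sum over maps $\sigma_l\colon\{1,\dots,N\}\smallsetminus\{l\}\to\{1,\dots,N\}$.

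Finally I would collect constants. The powers of $2$ combine as $2^{-N/2}\cdot2^{-1/2}\cdot2^{-(N-1)/2}=2^{-N}$, the power of $\pi$ stays $\pi^{-(N-1)/2}$, and the remaining analytic factor $(g^{-1}v-g^tw)_l\,H_{\sigma_l}(\sqrt\pi(g^{-1}v+g^tw))\,e^{-\pi\lVert g^{-1}v-g^tw\rVert^2}$ is by definition $\phi^0_{\sigma_l}(\rho_g^{-1}\vbf)$, just as the Gaussian--Hermite factor of $\varphi^0$ was identified in Proposition~\ref{explicitphihat}. For the sign, $\tfrac{N(N-1)}{2}+\tfrac{(N-1)(N-2)}{2}+(N-1)=N(N-1)$ is even, so the leading $-(-1)^{N(N-1)/2}$, the $(-1)^{N-1}$ from Lemma~\ref{splitlem}, and the $(-1)^{(N-1)(N-2)/2}$ from Lemma~\ref{exphermite} together leave just one minus sign, while the $(-1)^{l-1}$ survives inside the sum, yielding $-\sum_{l=1}^N\sum_{\sigma_l}(-1)^{l-1}\phi^0_{\sigma_l}(\rho_g^{-1}\vbf)\otimes\lambda(\sigma_l)$, the claimed formula.

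The one genuinely delicate point is the sign and normalization bookkeeping just outlined: four independent sources of signs --- the leading constant of $\Util$, the reordering sign $\epsilon(\{l\},\{1,\dots,N\}\smallsetminus\{l\})=(-1)^{l-1}$, the Koszul sign $(-1)^{(N-1)\cdot1}$ in Lemma~\ref{splitlem}, and $(-1)^{(N-1)(N-2)/2}$ in Lemma~\ref{exphermite} with $|I|=N-1$ --- together with the parity conventions for the physicist's Hermite polynomials, must all be reconciled so that they collapse to the stated $-\sum_l(-1)^{l-1}$. Everything else is a verbatim transcription of the $\varphi^0$ computation with the single extra exterior factor $s_\vbf^\ast v$.
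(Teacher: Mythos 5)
Your proposal is correct and follows essentially the same route as the paper: expand $s_\vbf^\ast v$ in the basis $e_l$, apply Lemma \ref{splitlem} to isolate the components $I=\{l\}$ with the sign $(-1)^{l-1}(-1)^{N-1}$, invoke Lemma \ref{exphermite} for $I=\{1,\dots,N\}\smallsetminus\{l\}$, and collect the constants. The sign and normalization bookkeeping you carry out (the parity of $\tfrac{N(N-1)}{2}+\tfrac{(N-1)(N-2)}{2}+(N-1)$ and the powers of $2$ combining to $2^{-N}$) agrees with the paper's computation.
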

\begin{proof} We have
\begin{align}
  \int^B s_\vbf(z) \wedge \exp(s_\vbf^\ast \Omega_1) & = \frac{1}{\sqrt{2}} \sum_{l=1}^N  \int^B  ((g_z^{-1}v-g_z^tw)_l \otimes e_l) \wedge \exp \left (-2\sqrt{\pi} \nabla s_\vbf+R \right ).
\end{align}
Using\footnote{Note that here $\epsilon(\{l\},\{1,\dots,N\}\smallsetminus\{l\})=(-1)^{l-1}$ and $(-1)^{k \vert \{l\} \vert } =(-1)^{N-1}$.}  Lemma \ref{splitlem} for $\alpha=s_\vbf(z)$ and $\beta=\exp(s_\vbf^\ast \Omega_1)$ we find that
\begin{align}
\int^B s_\vbf(z) \wedge \exp(s_\vbf^\ast \Omega_1) & = \frac{(-1)^{N-1}}{\sqrt{2}}\sum_{l=1}^N (-1)^{l-1} (g_z^{-1}v-g_z^tw)_l \exp(s_\vbf^\ast \Omega_1)_{I_{\widehat{l}}}.
\end{align}
By Lemma \ref{exphermite}, for $I =I_{\widehat{l}}$ we have
\begin{align}
\exp \left (-2\sqrt{\pi} \nabla s_\vbf+R \right )_I=(-1)^{\frac{(N-1) (N-2)}{2}}2^{-\frac{N-1}{2}}\sum_{\sigma_l}  H_{\sigma_l}\left (\sqrt{\pi}(g_z^{-1}v+g_z^{t}w) \right ) \otimes \lambda(\sigma_l),
\end{align}
where the sum is over all functions $\sigma_l \colon I \longrightarrow \{1, \dots, N\}$. Combining the two gives
\begin{align}
\int^B s_\vbf(z) \wedge \exp(s_\vbf^\ast \Omega_1) & = (-1)^{\frac{N(N-1)}{2}}2^{-\frac{N}{2}}\sum_{l=1}^N \sum_{\sigma_l}  (-1)^{l-1} \varphi^0_{\sigma_l}(\rho_{g_z}^{-1}\vbf) \otimes \lambda(\sigma_l).
\end{align}
The result follows by multipliying by the constant $(-1)^{\frac{N(N-1)}{2}}2^{-\frac{N}{2}}\pi^{-\frac{N-1}{2}}$ and the exponential term $\exp\left (-\pi  \lVert v-w \rVert^2 \right )$.
\end{proof}

\subsection{Estimates}

\begin{lem}\label{lemmapolynomial} Let $P(\vbf) \in \C[V_\R]$ be a polynomial of degree $d$ and $\vbf=(v,w) \in \Lcal^\vee$ a regular vector. There is a constant $C>0$ depending on $P$ and $\Lcal$ such that
\begin{align}
\int_0^\8   P(u^{-1}v,uw) e^{-\pi \lVert u^{-1}v-uw \rVert^2} \frac{du}{u} \leq  C \lVert v \rVert^{\frac{d}{2}} \lVert w \rVert^{\frac{d}{2}} P(v,w) e^{-2\pi \left ( \lVert v \rVert \lVert w \rVert-Q(\vbf) \right ) }.
\end{align}
\end{lem}
\begin{proof}
For $I=\{i_1, \dots,i_N\} \in (\Z_{\geq 0})^N$ we write $v^I \coloneqq v_1^{i_1} \cdots v_N^{i_N}$. We can write $P$ as a sum of monomials
\begin{align}
P(v,w)=\sum_{I,J} a_{I,J}v^Iw^J,
\end{align} 
where the sum is over all $N$-tuples $I,J \in (\Z_{\geq 0})^N$ with $\vert I \vert+\vert J \vert \leq d$.
We have
\begin{align}
& \int_0^\8   P(u^{-1}v,uw) e^{-\pi y \lVert u^{-1}v-uw \rVert^2} \frac{du}{u} \\
&\qquad = \sum_{I,J} a_{I,J}v^Iw^J e^{2 \pi Q(\vbf)}\int_0^\8  e^{-\pi u^{-2}\lVert v \rVert^2-\pi u^2 \lVert w \rVert^2} u^{\vert J \vert-\vert I \vert}\frac{du}{u} \\
&\qquad = \frac{1}{2}\sum_{I,J} a_{I,J}v^Iw^J  \left ( \frac{\lVert v \rVert}{\lVert w \rVert} \right )^{\frac{\vert J \vert-\vert I \vert}{2}} e^{2 \pi Q(\vbf)} K_{\frac{\vert J \vert-\vert I \vert}{2}}(\pi \Vert v \rVert \lVert w \rVert).
\end{align}
For a fixed real number  $r$ there is a positive constant $C_r$ (depending on $r$) such that for all $a \gg 0$
\begin{align}
K_{r}(a) \leq C_re^{-2a};
\end{align}
see \cite[K7 p.~271]{langelliptic}. Since $\vbf \in \Lcal^\vee \subset \frac{1}{n}V_\Z$ for some integer $n \geq 1$, we have
\begin{align}
\left ( \frac{\lVert v \rVert}{\lVert w \rVert} \right )^{\frac{\vert J \vert-\vert I \vert}{2}} \leq \frac{\lVert v \rVert^\frac{\vert J \vert}{2} \lVert w \rVert^\frac{\vert I \vert}{2}}{\lVert v \rVert^\frac{\vert I \vert}{2} \lVert w \rVert^\frac{\vert J \vert}{2}}\leq n^d \lVert v \rVert^{\frac{d}{2}} \lVert w \rVert^{\frac{d}{2}}.
\end{align}  
\end{proof}

\subsection{Thom form properties}

Since the section satisfies $s_\vbf(gz)=g s_{\rho_g^{-1}\vbf}(z)$ for any $g \in \GL_N(\R)^+$, it is $\Gamma_\vbf$-equivariant and descends to a section
\begin{align}
    s_\vbf \colon \Gamma_\vbf \backslash X & \longrightarrow \Gamma_\vbf \backslash E.
\end{align}
Hence, for fixed $\vbf$ we have $\varphi^0(z,\vbf) \in \Omega^N(X)^{\Gamma_{\vbf}}$.

\begin{prop} \cite[Proposition.~6.24(b)]{botu} \label{thomformprop} Let $E_0$ be the image of the zero section and let $\omega \in \Omega_c^{(N^2-N)/2}(\Gamma_\vbf \backslash E)$ be a compactly supported form. Then
\begin{align} \label{integralU}
\int_{\Gamma_\vbf \backslash E} \omega \wedge U=\int_{\Gamma_\vbf \backslash E_0} \omega.
\end{align}
\end{prop}


\begin{prop} \label{thomformprop2} Let $\vbf$ be a regular vector. 
\begin{enumerate} \item  If $Q(\vbf)>0$, then the form $\varphi^0(z,\vbf)$ is a Poincaré dual to $\Gamma_\vbf \backslash X_\vbf \subset \Gamma_\vbf \backslash X$. Thus, if $\omega \in \Omega_c^{(N^2-N)/2}(\Gamma_\vbf \backslash X)$ is a compactly supported form, then
\begin{align} 
\int_{\Gamma_\vbf \backslash X} \omega \wedge \varphi^0(z,\vbf)=\int_{\Gamma_\vbf \backslash X_\vbf} \omega.
\end{align}
\item  If $Q(\vbf) \leq 0$, then the form $\varphi^0(z,\vbf)$ is exact.
\end{enumerate}
\end{prop}

\begin{proof} The map $[g,v]  \longrightarrow (z=gg^t,gv)$ identifies the vector bundle $E$ with  $X \times \R^N$. After composing with this map, the section $s_\vbf$ becomes $s_\vbf(z)=(z,v-zw)$. This map has rank $N$ since $w \neq 0$.  Let $E_\vbf \subset E$ be the image of $s_\vbf$. If $Q(\vbf)>0$, then $E_\vbf$ intersects the zero section $E_0$ transversally. The pullback of a Poincaré dual of $\Gamma_\vbf \backslash E_0$ is then a Poincaré dual of $\Gamma_\vbf \backslash X_\vbf$. This proves the first part.

When $Q(\vbf) \leq 0$, then $X_\vbf$ is empty and $E_\vbf$ is contained in $E \smallsetminus E_0$. Since $U$ is exact on $E \smallsetminus E_0$ by Proposition \ref{prop transgression}, it follows that $\varphi^0(z,\vbf)$ is exact. More precisely, the pullback of $U= d \int_1^\8 \Util_t \frac{dt}{\sqrt{t}}$ in Proposition \ref{prop transgression} gives
\begin{align}
\varphi^0(z,\vbf)=d\int_1^\8 \alpha^{0}(z,\sqrt{t}\vbf) \frac{dt}{t}.
\end{align}

\end{proof}
%

\section{Theta lift}

\subsection{Weil representation} \label{weil theta}
We consider the Weil representation of $\GL_N(\R) \times \SL_2(\R)$ on the space $\Scal(V_\R)$ of Schwartz functions in two different models. Let $e \colon \R \longrightarrow U(1)$ be the character $e(t)\coloneqq e^{2i\pi t}$.

In the first model the representation $\omega \colon \GL_N(\R) \times \SL_2(\R) \longrightarrow U(\Scal(V_\R))$ is defined by the formulas
\begin{align*}
\omega\left (g, h \right ) \varphi(\vbf)&=\omega(1,h)\varphi(\rho_g^{-1}\vbf), \\
    \omega\left (1, \begin{pmatrix}
        a & 0 \\ 0 & a^{-1}
    \end{pmatrix} \right ) \varphi(\vbf)&=\vert a \vert^N \varphi(a\vbf), \qquad a \in \R^\times, \\
    \omega\left (1, \begin{pmatrix}
        1 & n \\ 0 & 1
    \end{pmatrix} \right ) \varphi(\vbf)&=e \left ( nQ(\vbf) \right ) \varphi(\vbf), \qquad n \in \R, \\
    \omega\left (1, \begin{pmatrix}
        0 & -1 \\ 1 & 0
    \end{pmatrix} \right ) \varphi(\vbf)&= \int_{V_\R} \varphi(\vbf') e \left (- B(\vbf,\vbf') \right )d\vbf',
\end{align*}
where $\vbf'=(v',w')$ and $\vbf=(v,w)$. In this orthogonal-symplectic model, the bilinear form is $B(\vbf,\vbf')=\langle v',w \rangle+ \langle v,w' \rangle$, with associated quadratic form $Q(\vbf)=\frac{1}{2}B(\vbf,\vbf)$.   

A second model $\omega' \colon \GL_N(\R) \times \SL_2(\R) \longrightarrow U(\Scal(V_\R))$ for the Weil representation is given by
by
\begin{align}
    \omega'(g,h)\varphi(\vbf)\coloneqq \vert \det(g)\vert^{-1}\varphi(g^{-1}\vbf h)
\end{align}
where $h$ acts on the columns of $\vbf$. The two representations are intertwined by a partial Fourier transform
\begin{align}
    \four_2 \colon \Scal(V_\R) \longrightarrow \Scal(V_\R),
\end{align}
defined by 
\begin{align}
   \four_2(\varphi)(v,w) \coloneqq \int_{\R^{N}} \varphi(v,w') e(\langle w,w' \rangle)dw'.
\end{align}
Similarly, we define the Fourier transform in the first variable
\begin{align}
    \four_1(\varphi)(v,w) \coloneqq \int_{\R^{N}} \varphi(v',w) e(\langle v,v' \rangle)dv'.
\end{align}
One can easily verify the following Lemma.
\begin{lem} \label{lem intertwiner}
    We have 
    \begin{align}
        \four_2(\omega(g,h)\varphi)(\vbf)=\omega'(g,h)\four_2(\varphi)(\vbf)
    \end{align}
and
\begin{align}
        \four_1(\omega(g,h)\varphi)(\vbf)=\omega'(g^\ast,h^\ast)\four_1(\varphi)(\vbf),
\end{align}
where $g^\ast=g^{-t}$ and $h^\ast$ is defined by 
\begin{align}
 \begin{pmatrix} a & b \\ c & d \end{pmatrix}^\ast=\begin{pmatrix} 0 & 1 \\ 1 & 0 \end{pmatrix} \begin{pmatrix} a & b \\ c & d \end{pmatrix}\begin{pmatrix} 0 & 1 \\ 1 & 0 \end{pmatrix}=\begin{pmatrix} d & c \\ b & a \end{pmatrix}.
 \end{align}
\end{lem}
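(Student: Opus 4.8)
The plan is to verify both intertwining identities on a generating set of $\GL_N(\R)\times\SL_2(\R)$ and then propagate them to the whole group. Since $\omega$ and $\omega'$ are representations and $\four_2$ is a fixed linear automorphism of $\Scal(\R^{2N})$, the set of pairs $(g,h)$ for which $\four_2\circ\omega(g,h)=\omega'(g,h)\circ\four_2$ holds is a subgroup; so it is enough to treat the pairs $(g,1)$ with $g\in\GL_N(\R)$ and the pairs $(1,h)$ with $h$ in the diagonal torus, the upper unipotent $\begin{pmatrix}1&b\\0&1\end{pmatrix}$, and the Weyl element $w_0=\begin{pmatrix}0&-1\\1&0\end{pmatrix}$, which together generate the group. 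For the bookkeeping it is convenient to record that, since $h=\begin{pmatrix}a&b\\c&d\end{pmatrix}$ acts on the columns of the $N\times 2$ matrix $(v\mid w)$, one has $\omega'(g,h)\phi(v,w)=|\det g|^{-1}\phi\bigl(g^{-1}(av+cw),\,g^{-1}(bv+dw)\bigr)$.

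For the first three generators the verification amounts to a single linear change of variables in the integral defining $\four_2$. For $(g,1)$ I would substitute $w'\mapsto g^{-t}w'$ in $\four_2(\omega(g,1)\phi)(v,w)=\int_{\R^N}\phi(g^{-1}v,g^tw')e(w^tw')\,dw'$; this produces the Jacobian $|\det g|^{-1}$ and turns the character into $e\bigl((g^{-1}w)^tw'\bigr)$, which is precisely $\omega'(g,1)\four_2(\phi)(v,w)$. For the torus one rescales the integration variable, and for $\begin{pmatrix}1&b\\0&1\end{pmatrix}$ the extra factor $e(b\,v^tw')$ is absorbed into the character and shifts the transform variable from $w$ to $w+bv$; both outcomes agree with the formula above, using $(v\mid w)\begin{pmatrix}a&0\\0&a^{-1}\end{pmatrix}=(av\mid a^{-1}w)$ and $(v\mid w)\begin{pmatrix}1&b\\0&1\end{pmatrix}=(v\mid bv+w)$.

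The substantive case is the Weyl element, where $\omega(1,w_0)\phi$ is the full Fourier transform of $\phi$ with respect to $Q$. Writing $\four_2\bigl(\omega(1,w_0)\phi\bigr)(v,w)$ as an iterated integral and interchanging the order of integration, the integral over the innermost variable becomes a Fourier integral on $\R^N$ that, by Fourier inversion, collapses, leaving $\int_{\R^N}\phi(w,w')e(-v^tw')\,dw'=\four_2(\phi)(w,-v)$; this manipulation is legitimate because $\phi\in\Scal(\R^{2N})$, so every partial transform occurring along the way is again Schwartz. On the other side $(v\mid w)w_0=(w\mid -v)$, hence $\omega'(1,w_0)\four_2(\phi)(v,w)=\four_2(\phi)(w,-v)$ as well. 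I expect this Fourier-inversion step to be the one place where a little more than a substitution is needed, though it is still routine.

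For the second identity I would not recompute but reduce to the first. Let $\tau\colon\Scal(\R^{2N})\to\Scal(\R^{2N})$ be the involution $\tau\phi(v,w)=\phi(w,v)$; a one-line check gives $\four_1=\tau\circ\four_2\circ\tau$. From the explicit formulas one reads off that $\tau$ conjugates the two models by $\tau\,\omega(g,h)\,\tau=\omega(g^{-t},h)$ and $\tau\,\omega'(g,h)\,\tau=\omega'(g,h^\ast)$, where $h^\ast=\begin{pmatrix}0&1\\1&0\end{pmatrix}h\begin{pmatrix}0&1\\1&0\end{pmatrix}$: the $\SL_2$-part of $\omega$ is fixed because $Q$ is symmetric under the simultaneous swaps $v\leftrightarrow w$, $v'\leftrightarrow w'$, its $\GL_N$-argument becomes $g^{-t}$ because swapping the two slots carries $\rho_g^{-1}$ to $\rho_{g^{-t}}^{-1}$, and conjugating the right column-action of $h$ by the swap matrix turns $h$ into $h^\ast$ while leaving the $\GL_N$-factor of $\omega'$ alone. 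Combining these with the first identity,
\[
\four_1(\omega(g,h)\phi)=\tau\four_2\tau\,\omega(g,h)\phi=\tau\four_2\,\omega(g^{-t},h)\,\tau\phi=\tau\,\omega'(g^{-t},h)\,\four_2\tau\phi=\omega'(g^{-t},h^\ast)\,\four_1\phi,
\]
which is the claimed formula with $g^\ast=g^{-t}$. The only step there needing a genuine (if short) computation is the conjugation relation $\tau\,\omega(1,w_0)\,\tau=\omega(1,w_0)$ on the Weyl element, which follows from the invariance of the kernel $e\bigl(-B(v',w)-B(v,w')\bigr)$ under those swaps.
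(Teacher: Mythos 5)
Your proof is correct, and its basic strategy --- verify the intertwining on generators by a change of variables in the Fourier integral, then use that the set of pairs $(g,h)$ where $\four_2\circ\omega(g,h)=\omega'(g,h)\circ\four_2$ is a subgroup --- is the same as the paper's. The paper, however, only writes out the computation for $g\in\GL_N(\R)$ together with upper-triangular $h=\begin{pmatrix}a&na^{-1}\\0&a^{-1}\end{pmatrix}$, leaving the Weyl element and the entire $\four_1$ identity implicit; you supply both. Your treatment of $w_0$ by interchanging integrals and invoking Fourier inversion (legitimate since $\phi$ is Schwartz) is exactly the missing generator computation, and your derivation of the $\four_1$ statement from the $\four_2$ statement by conjugating with the swap $\tau\phi(v,w)=\phi(w,v)$ --- using $\four_1=\tau\four_2\tau$, $\tau\omega(g,h)\tau=\omega(g^{-t},h)$, and $\tau\omega'(g,h)\tau=\omega'(g,h^\ast)$ --- is a clean way to obtain the second identity without redoing any integrals; all three conjugation relations check out against the explicit formulas. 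So your write-up is a strictly more complete version of the paper's argument rather than a different one.
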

%

Let $\Lcal=L \times L$ as previously, and let $\chi\in \C[D_\Lcal]$ be a test function. Note that $D_\Lcal \simeq  D_L \times D_L$ where $D_L\coloneqq L^\ast/L$. We can also define a finite partial Fourier transform 
\begin{align}
\four_2 \colon \C[D_\Lcal] \longrightarrow \C[D_\Lcal],
\end{align}
by taking a finite Fourier transform in the second variable
\begin{align}
\four_2(\chi)(v_0,w_0') \coloneqq \frac{1}{\sqrt{\vert D_L \vert }} \sum_{w_0 \in D_L} \chi(v_0,w_0)e(\langle w_0,w_0' \rangle).
\end{align}
We define $\four_1 \colon \C[D_\Lcal] \longrightarrow \C[D_\Lcal]$ analogously. 

\begin{prop}[Poisson summation] We have
    \begin{align}
\sum_{\vbf \in  V} \omega(g,h)\varphi(\vbf)\chi(\vbf) & =\sum_{\vbf \in  V} \omega'(g,h)\four_2(\varphi)(\vbf)\four_2(\chi)(\vbf) \\
& =\sum_{\vbf \in  V} \omega'(g^\ast,h^\ast)\four_1(\varphi)(\vbf)\four_1(\chi)(\vbf).
\end{align}
\end{prop}

\subsection{Computation of the partial Fourier transforms} 
For a vector $\mu \in \C^N$ and a function $\sigma$ partitioning $\{1,\dots,N\}$ let us write
\begin{align}
\mu_\sigma \coloneqq \mu_1^{d_1} \cdots \mu_N^{d_N}   
\end{align}
where $d_m=\vert\sigma^{-1}(m)\vert$.
\begin{prop} \label{phisigma}
    The partial Fourier transforms of $\varphi_\sigma$ and $\varphi_{\sigma_l}$ are
\begin{align}
    \four_2(\varphi_\sigma)(\vbf)& =i^N 2^N \pi^\frac{N}{2} (\overline{vi+w})_\sigma e^{-\pi  \lVert v\rVert^2-\pi \lVert w \rVert^2}
 \end{align}
 and
 \begin{align}
   \four_2( \varphi_{\sigma_l})(\vbf)& =-i^{N}2^{N-1} \pi^{\frac{N-1}{2}} \left [ \frac{2\pi \vert v_li+w_l \vert ^2+\vert \sigma_l^{-1}(l) \vert}{2\pi\overline{(v_li+w_l)}}\right ] \overline{(vi+w)}_{\sigma_l}e^{-\pi  \lVert v\rVert^2-\pi \lVert w \rVert^2}.
\end{align}
In particular, we have $\four_1(\varphi)(0)=\four_2(\varphi)(0)=\four_1(\alpha)(0)=\four_2(\alpha)(0)=0$.
\end{prop}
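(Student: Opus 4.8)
The plan is to reduce everything to a one--variable Gaussian--Hermite integral. As functions of $(v,w)$, both $\phi_\sigma$ and $\phi_{\sigma_l}$ are products over the coordinates $m=1,\dots,N$ of a single--variable factor $H_{d_m}(-\sqrt{\pi}(v_m+w_m))\,e^{-\pi v_m^2-\pi w_m^2}$, where $d_m=\vert\sigma^{-1}(m)\vert$ (resp.\ $d_m=\vert\sigma_l^{-1}(m)\vert$), with $\phi_{\sigma_l}$ carrying an extra linear factor $v_l-w_l$ in the $l$--th slot. Since $\four_2$ is the tensor product of the one--variable Fourier transforms in $w_1,\dots,w_N$, it suffices to compute, for a real parameter $a$ and $d\in\NN$, the Fourier transform in $t$ of $H_d(-\sqrt{\pi}(a+t))\,e^{-\pi a^2-\pi t^2}$ and of $(a-t)$ times this function.

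For the first I would use the generating identity $\sum_{d\geq0}H_d(x)\,s^d/d!=e^{2xs-s^2}$. Substituting $x=-\sqrt{\pi}(a+t)$, multiplying by $e^{-\pi a^2-\pi t^2}$ and integrating against $e^{2\pi ibt}\,dt$: the $t$--integral is a translated Gaussian integral, and completing the square collapses the whole generating series to $e^{-\pi a^2-\pi b^2}\,e^{-2\sqrt{\pi}(a+ib)s}$ (up to the sign of $2\sqrt\pi$ dictated by the normalization of $H_d$). Reading off the coefficient of $s^d/d!$ identifies the one--variable transform with a numerical constant times $(a+ib)^{d}\,e^{-\pi a^2-\pi b^2}$. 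Taking the product over $m=1,\dots,N$, rewriting $v_m+iw_m=i\,\overline{v_mi+w_m}$ to pass to the notation $\mu_\sigma$, and bookkeeping the constants gives the stated formula for $\four_2(\phi_\sigma)$; $\four_1(\phi_\sigma)$ comes from the same computation with $v$ and $w$ interchanged, since $\phi_\sigma$ is symmetric in $v\leftrightarrow w$.

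For $\phi_{\sigma_l}$ the only new point is the factor $v_l-w_l$ in the $l$--th slot. Multiplication by $v_l$ merely rescales, while multiplication by $-w_l$ under the integral corresponds, via $\widehat{t f}=\tfrac{1}{2\pi i}(\widehat{f})'$, to applying $-\tfrac{1}{2\pi i}\partial_{w_l}$ to the $l$--th factor found above. Differentiating that factor in $b=w_l$ produces one term from the Gaussian exponent and one term lowering the power of $(a+ib)$; adding these to the $v_l$--term and using $(v_l-iw_l)(v_l+iw_l)=v_l^2+w_l^2=\vert v_li+w_l\vert^2$ collapses the $l$--th factor to $(v_l+iw_l)^{d}$ times the bracketed factor in the statement, with $d=\vert\sigma_l^{-1}(l)\vert$. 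Rewriting via $v_l+iw_l=i\,\overline{v_li+w_l}$ and multiplying in the contributions of the other $N-1$ coordinates gives the second formula.

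For the last assertion, feed the explicit expressions for $\varphi$ and $\alpha$ (Propositions \ref{explicitphihat} and \ref{explicitalphatil}) into the computation above, using the change of variables that converts $\four_i$ of $\phi(\rho_g^{-1}\,\cdot\,)$ into $\four_i$ of $\phi$ up to a Jacobian. Each of $\four_1(\varphi),\four_2(\varphi),\four_1(\alpha),\four_2(\alpha)$ is then a polynomial in $(v,w)$ times $e^{-\pi\lVert v\rVert^2-\pi\lVert w\rVert^2}$, and one checks directly from the formulas that the polynomial part vanishes at $\vbf=0$ (each relevant monomial is divisible by some $\overline{v_mi+w_m}$, since $\sum_m d_m\geq1$). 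Hence all four transforms vanish at $\vbf=0$, which is precisely the hypothesis of Proposition \ref{rapiddecrease conv}, so $\Theta_{\varphi\chi}$ and $\Theta_{\alpha\chi}$ are rapidly decreasing as $u\to0$ and as $u\to\infty$. The only genuine difficulty is the bookkeeping of signs and of the powers of $i$, $2$ and $\pi$ through the generating--function manipulation and the differentiation step; that is where I would expect errors to creep in.
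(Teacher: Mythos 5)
Your computation of the two Fourier transforms follows the same route as the paper: factor $\phi_\sigma$ and $\phi_{\sigma_l}$ into one-variable pieces, transform $H_d(\sqrt{\pi}(x+y))e^{-\pi x^2-\pi y^2}$ by summing the Hermite generating series and completing the square in the Gaussian integral, treat the extra factor $v_l-w_l$ via $\four_2(wf)=\tfrac{1}{2\pi i}\partial_{w'}\four_2(f)$, and deduce $\four_1$ from the symmetry (resp.\ antisymmetry) of $\phi_\sigma$ (resp.\ $\phi_{\sigma_l}$) under $v\leftrightarrow w$. That part is sound and is exactly what the paper does.

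The step that fails as written is your justification of $\four_1(\alpha)(0)=\four_2(\alpha)(0)=0$, namely that ``each relevant monomial is divisible by some $\overline{v_mi+w_m}$, since $\sum_m d_m\geq 1$''. For $\phi_{\sigma_l}$ one has $\sum_m d_m=N-1$, so the premise already fails at $N=1$ (there the bracketed factor itself equals $v_li+w_l$, which rescues the conclusion). More seriously, the bracket carries $\overline{(v_li+w_l)}$ in its denominator: when $N=2$ and $\sigma_l$ sends its unique element to $l$, the product of the bracket with $\overline{(vi+w)}_{\sigma_l}$ contains the constant term $\vert\sigma_l^{-1}(l)\vert/(2\pi)$, and indeed a direct check with $\phi_{\sigma_1}(v,w)=2\sqrt{\pi}(v_1^2-w_1^2)e^{-\pi\lVert v\rVert^2-\pi\lVert w\rVert^2}$ gives $\four_2(\phi_{\sigma_1})(0)=-1/\sqrt{\pi}\neq 0$. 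So the individual transforms $\four_2(\phi_{\sigma_l})(0)$ do not all vanish. What saves the statement is that in the alternating sum defining $\alpha$ the two offending terms for $N=2$ are attached to the same one-form $\lambda_{12}=\lambda_{21}$ and enter with opposite signs $(-1)^{l-1}$, so they cancel; for $N\geq 3$ every monomial does have positive degree and your divisibility argument goes through. You therefore need to run the vanishing argument on $\alpha$ as a whole rather than termwise in low rank (the published proof makes the same overstatement, so this is a correction to both).
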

\begin{proof}  
The proposition follows from the computation of the partial Fourier transform of each of terms of
    \begin{align} \label{prod1}  \varphi_\sigma(v,w) 
    & =\prod_{m=1}^NH_{d_m}\left (\sqrt{\pi}(v_m+w_m) \right )\exp\left (-\pi v_m^2-\pi w_m^2 \right ).
\end{align}

A direct computation shows that the partial Fourier transform of $H_d(\sqrt{\pi}(x+y))e^{-\pi x^2-\pi y^2}$ is
\begin{align}
2^d \sqrt{\pi}^d(x+iy')^de^{-\pi x^2-\pi(y')^2}=i^d2^d \sqrt{\pi}^d\overline{(ix+y')}^de^{-\pi x^2-\pi(y')^2}.
\end{align} 

Recall that 
\begin{align}
\varphi_{\sigma_l}(\vbf) 
& = (v_l-w_l) \prod_{m=1}^N H_{\vert \sigma_l^{-1}(m) \vert}(\sqrt{\pi}(v_m+w_m))\exp\left (-\pi v_m^2-\pi w_m^2 \right ) 
).
\end{align} Here again, the partial Fourier transform is computed separately for each factor of the product. For  the factors $m\neq l$, we use the previous computations for $\varphi_\sigma$. For the factor $m=l$, the partial Fourier transform of $(x-y)H_d(\sqrt{\pi}(x+y))e^{-\pi x^2-\pi y^2}$ is
\begin{align} \label{fourier transform equality}
-i^{d+1}2^d \sqrt{\pi}^d \left [ \frac{2\pi \vert ix+y' \vert ^2+d}{2\pi\overline{(ix+y')}}\right ]\overline{(ix+y')}^de^{-\pi x^2-\pi(y')^2}.
\end{align}

 For the last statement, note that 
 \begin{align}
\varphi_\sigma(v,w)=\varphi_\sigma(w,v), \qquad \varphi_{\sigma_l}(v,w)=-\varphi_{\sigma_l}(w,v)
\end{align} We deduce that 
\begin{align}
\four_1(\varphi_\sigma)(v,w) =\four_2(\varphi_\sigma)(w,v), \qquad \four_1( \varphi_{\sigma_l})(v,w)= -\four_2( \varphi_{\sigma_l})(w,v).
\end{align} In particular, it follows that $\four_1(\varphi)(0)=\four_2(\varphi)(0)$ and $\four_1(\alpha)(0)=-\four_2(\alpha)(0)$. The vanishing of $\four_2( \varphi_\sigma)(0)$ is clear, and implies the vanishing of $\four_2(\varphi)(0)$. The vanishing of $\four_2( \varphi_{\sigma_l})(0)$ is also clear when $N>2$ since 
 \begin{align}
\overline{(vi+w)}_{\sigma_l}=(v_li+w_l)\prod_{\substack{m=1 \\ m \neq l}}^N (v_mi+w_m)^{\vert \sigma_l^{-1}(m) \vert}
\end{align} vanishes at $\vbf=0$, and is of degree $N-1 \geq 2$.  When $N=2$, then $\four_2( \varphi_{\sigma_l})(0)$ can be nonzero, since it can happen that $d=1$ in \eqref{fourier transform equality}. However, one can see that these contributions cancel out by writing out the form explicitly in this setting. See \cite[Proposition.~3.2]{rbrsln2}.

 Finally, when $N=1$ then $l=1$ and $\sigma_l$ is a function on an empty set. Hence, the form is
 \begin{align} \four_2( \varphi_{\sigma_l})(\vbf)& =-2i\pi (vi+w)\exp\left (-\pi  \lVert v\rVert^2-\pi \lVert w \rVert^2 \right ),
\end{align}
which also vanishes at $\vbf=0$.
\end{proof}


\subsection{Theta series} \label{sec:thetadiffforms} 

Using the Weil representation, we can write $\varphi_\sigma(\rho_g^{-1}\vbf)=\omega(g,1)\varphi_\sigma(\vbf)$ so that
    \begin{align}
    \varphi(z,\vbf)= 2^{-N}\pi^{-\frac{N}{2}}\sum_{\sigma}\omega(g_z,1)\varphi_\sigma(\vbf) \otimes \lambda(\sigma) \in \Omega^N(X) \otimes \Scal(V_\R)
\end{align}
and similarly
 \begin{align}
    \alpha(z,\vbf)=-2^{-N}\pi^{-\frac{N-1}{2}} \sum_{l=1}^N \sum_{\sigma_l}(-1)^{l-1} \omega(g_z,1)\varphi_{\sigma_l}(\vbf) \otimes \lambda(\sigma_l).
\end{align}
Using the fact that the partial Fourier transform intertwines the two representations, one can easily verify the following lemma. 
\begin{lem} \label{soinv}
The functions $\varphi_\sigma$ and $\varphi_{\sigma_l}$  satisfy
\begin{align}
\omega(1,k)\varphi_\sigma(\vbf)=j(k,i)^{-N}\varphi_\sigma(\vbf), \qquad \omega(1,k)\varphi_{\sigma_l}(\vbf)=j(k,i)^{-(N-2)}\varphi_{\sigma_l}(\vbf)
\end{align}
for $k \in \SO(2)$.
\end{lem}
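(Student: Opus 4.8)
The plan is to parametrize $k = k_\theta = \left(\begin{smallmatrix} \cos\theta & -\sin\theta \\ \sin\theta & \cos\theta\end{smallmatrix}\right)$, so that $j(k_\theta,i) = i\sin\theta + \cos\theta = e^{i\theta}$, and to verify the two identities by passing to the second model $\omega'$, where the $\SO(2)$–action is simply $\omega'(1,k)\phi(\vbf) = \phi(\vbf k)$ with $k$ acting on the two columns $v,w$ of $\vbf$. By the lemma intertwining $\omega$ and $\omega'$ via $\four_2$, one has $\four_2(\omega(1,k)\phi_\sigma) = \omega'(1,k)\four_2(\phi_\sigma)$ and likewise for $\phi_{\sigma_l}$; since $\four_2$ is a linear isomorphism of $\Scal(\R^{2N})$, it suffices to show that $\four_2(\phi_\sigma)$ and $\four_2(\phi_{\sigma_l})$ are eigenvectors of $\omega'(1,k_\theta)$ with eigenvalues $e^{-iN\theta}$ and $e^{-i(N-2)\theta}$, and then transport these back along $\four_2^{-1}$.

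The computation rests on one elementary identity. Writing $\vbf k_\theta = (v',w')$ with $v' = v\cos\theta + w\sin\theta$ and $w' = -v\sin\theta + w\cos\theta$, one checks componentwise that
\begin{align}
v'_m i + w'_m = e^{i\theta}(v_m i + w_m), \qquad \lVert v'\rVert^2 + \lVert w'\rVert^2 = \lVert v\rVert^2 + \lVert w\rVert^2,
\end{align}
the second equality because $k_\theta$ acts orthogonally on each plane $(v_m,w_m)$. Now I would invoke Proposition \ref{phisigma}: up to the nonzero constant $i^N 2^N\pi^{N/2}$, $\four_2(\phi_\sigma)(\vbf)$ equals $(\overline{vi+w})_\sigma\,e^{-\pi\lVert v\rVert^2-\pi\lVert w\rVert^2} = \prod_m \overline{(v_m i + w_m)}^{\,d_m}\,e^{-\pi\lVert v\rVert^2-\pi\lVert w\rVert^2}$ with $\sum_m d_m = N$. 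Replacing $\vbf$ by $\vbf k_\theta$ leaves the Gaussian unchanged and multiplies each factor $\overline{(v_m i + w_m)}$ by $e^{-i\theta}$, so $\four_2(\phi_\sigma)(\vbf k_\theta) = e^{-iN\theta}\four_2(\phi_\sigma)(\vbf)$. Since $e^{-iN\theta} = j(k_\theta,i)^{-N}$, applying $\four_2^{-1}$ gives the first identity.

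For $\phi_{\sigma_l}$ the argument is the same, with one extra factor to track. By Proposition \ref{phisigma}, up to a nonzero constant $\four_2(\phi_{\sigma_l})(\vbf)$ equals
\begin{align}
\left[\frac{2\pi\vert v_l i + w_l\vert^2 + \vert \sigma_l^{-1}(l)\vert}{2\pi\,\overline{(v_l i + w_l)}}\right]\overline{(vi+w)}_{\sigma_l}\;e^{-\pi\lVert v\rVert^2-\pi\lVert w\rVert^2},
\end{align}
where now $\sum_m \vert\sigma_l^{-1}(m)\vert = N-1$. Under $\vbf \mapsto \vbf k_\theta$: the Gaussian is unchanged; $\overline{(vi+w)}_{\sigma_l}$ is multiplied by $e^{-i(N-1)\theta}$; and in the bracket the numerator is invariant (the constant $\vert\sigma_l^{-1}(l)\vert$ and $\vert v_l i + w_l\vert^2$ are both unchanged) while the denominator is multiplied by $e^{-i\theta}$, so the bracket acquires a factor $e^{i\theta}$. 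Hence $\four_2(\phi_{\sigma_l})(\vbf k_\theta) = e^{i\theta}e^{-i(N-1)\theta}\four_2(\phi_{\sigma_l})(\vbf) = e^{-i(N-2)\theta}\four_2(\phi_{\sigma_l})(\vbf) = j(k_\theta,i)^{-(N-2)}\four_2(\phi_{\sigma_l})(\vbf)$, and $\four_2^{-1}$ gives the second identity.

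There is no real obstacle here: after passing to the $\omega'$ model, everything is bookkeeping driven by the single identity $v'_m i + w'_m = e^{i\theta}(v_m i + w_m)$. The only points that require care are matching the sign convention for the automorphy factor $j(k_\theta,i) = e^{i\theta}$ with the exponents, and the covariance of the bracket term in $\four_2(\phi_{\sigma_l})$. As an alternative one could avoid Proposition \ref{phisigma} and instead differentiate at $\theta = 0$, showing directly that $\phi_\sigma$ and $\phi_{\sigma_l}$ are eigenfunctions of the second‑order operator $\omega(1,\left(\begin{smallmatrix}0 & -1\\ 1 & 0\end{smallmatrix}\right))$ — built from multiplication by $B(\vbf)$ and the Laplacian — with eigenvalues $-iN$ and $-i(N-2)$; but the computation on the $\omega'$ side is shorter.
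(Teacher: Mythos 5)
Your proof is correct and follows essentially the same route as the paper: both pass to the $\omega'$ model via the intertwining property of $\four_2$, use the identity $(cv+sw)i+(-sv+cw)=j(k,i)(vi+w)$, and count degrees ($N$ for $(\overline{vi+w})_\sigma$, and $N-1$ plus the $e^{i\theta}$ from the bracket for $\phi_{\sigma_l}$). Your treatment of the bracket term in $\four_2(\phi_{\sigma_l})$ just makes explicit what the paper summarizes by noting that $\lvert j(k,i)\rvert^2=1$.
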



For $\tau=x+iy \in \HH$ we define the theta series
\begin{align}
\Theta_{\varphi, \chi}(z,\tau) & \coloneqq j(h_\tau,i)^N  \sum_{\vbf \in V}  \chi(\vbf)\omega(1,h_\tau) \varphi(z,\vbf) \in \Omega^N(X)^\Gamma \otimes C^\8(\HH)
\end{align}
where $h_\tau \in \SL_2(\R)$ is any matrix sending $i$ to $\tau$. We can take
\begin{align}
h_\tau = \begin{pmatrix}
     \sqrt{y} & x\sqrt{y}^{-1} \\ 0 & \sqrt{y}^{-1}
 \end{pmatrix},
\end{align}
for which $j(h_\tau,i)=\sqrt{y}^{-1}$. Since we have multiplied by $j(h_\tau,i)^N$, Lemma \ref{soinv} implies that the function is independent of the choice of $h_\tau$. For $\gamma \in \Gamma'$ we have $h_{\gamma \tau}=\gamma h_\tau k$ for some $k \in \SO(2)$. It follows from the theta machinery that
\begin{align}
 \Theta_{\varphi, \chi}(z,\gamma\tau)=j(\gamma,\tau)^N \Theta_{\varphi, \chi}(z,\tau),
\end{align}
so the form transforms like a modular form of weight $N$ for $\Gamma' \subseteq \SL_2(\Z)$. Note that the form is not holomorphic but holomorphic in cohomology; see Theorem \ref{holo in coho}.

Similarly, we define the theta series
\begin{align}
\Theta_{\psi, \chi}(z,\tau) & \coloneqq j(h_\tau,i)^N  \sum_{\vbf \in V}  \chi(\vbf)\omega(1,h_\tau) \psi(z,\vbf) \in \Omega^N(X)^\Gamma \otimes C^\8(\HH)
\end{align}
which also transforms like a modular form of weight $N$ for $\Gamma'$. On the other hand, the theta series
\begin{align} \label{thetaalpha}
\Theta_{\alpha, \chi}(z,\tau) & \coloneqq j(h_\tau,i)^{-(N-2)} \sum_{\vbf \in V} \chi(\vbf)\omega(1,h_\tau)\alpha(z,\vbf)
\end{align} transforms like a modular form of weight $(N-2)$ for $\Gamma'$.
 \begin{rmk} In \cite{rbrsln2}, we use this form to construct a Borcherds lift that is adjoint to the derivative of $E_{\varphi,\chi}(z_1,\tau,s)$ defined below.
 \end{rmk}
 
\subsection{The Eisenstein class} \label{sec:eisensteinclass}

With respect to the splitting $X \simeq S \times \R_{>0}$ we can write
\begin{align}
    \Theta_{\varphi, \chi}(z,\tau) =\Theta_{\vptil, \chi}(z,\tau)+\Theta_{\psi, \chi}(z,\tau)  \frac{du}{u},
\end{align}
where we recall that $z=(z_1,u)$. We define the integral 
\begin{align} \label{integral definition}
E_{\varphi, \chi}(z_1,\tau,s) \coloneqq p_\ast \left (\Theta_{\varphi, \chi}(z,\tau) u^{-s}\right )= \int_{0}^\8 \Theta_{\psi, \chi}(z,\tau) u^{-s} \frac{du}{u}
\end{align}
for $s \in \C$.
\begin{prop} \cite[Proposition.6.14.1]{botu} \label{dpushforward} Let $d$ and $d_1$ be the exterior derivatives on $X$ and $S$ respectively. For a form $\eta \in \Omega_{\rd}^N(X)$ we have
\begin{align}
p_\ast( d \eta)=d_1 p_\ast (\eta) \in \Omega^{N-1}(S).
\end{align}
\end{prop}
%

\begin{prop} The integral \eqref{integral definition} converges uniformly on compact sets for any $s \in \C$. At $s=0$, the form $E_{\varphi, \chi}(z_1,\tau) \coloneqq E_{\varphi, \chi}(z_1,\tau,0)$ is closed.
\end{prop}

\begin{proof} We have
\begin{align}
\Theta_{\varphi, \chi}(z,\tau)-\four_2(\varphi)(0)\four_2(\chi)(0) & =O( e^{-C/u}) \qquad u \rightarrow 0, \\
\Theta_{\varphi, \chi}(g,h)-\four_1(\varphi)(0)\four_1(\chi)(0) & =O(e^{-Cu}) \qquad u \rightarrow \8.
\end{align}
By Proposition \ref{phisigma} we have $\four_1(\varphi)(0)=\four_2(\varphi)(0)=0$, so that $\Theta_{\varphi, \chi}(z,\tau)$ is rapidly decreasing at $0$ and $\8$ and the integral converges. Since $\Theta_{\varphi, \chi}(z,\tau)$ is closed, the pushforward $E_{\varphi, \chi}(z_1,\tau)$ is closed by Proposition \ref{dpushforward}.
%
%
\end{proof}

\begin{prop} \label{eisensteinseries}For $\re(s)>N$ we can write
\begin{align}
 E_{\varphi, \chi}(z_1,\tau,s) = \sum_{\sigma}  E_{\varphi, \chi}(g_{z_1},\tau,s)_\sigma \otimes \left ( \iota_{u \frac{\partial}{\partial u}} \lambda(\sigma) \right ),
\end{align}
where
\begin{align}
E_{\varphi, \chi}(g_{z_1},\tau,s)_\sigma \coloneqq \Lambda_N(s) y^{\frac{s}{2}} \sum_{\substack{\vbf \in V \\ \vbf \neq 0}} \four_2(\chi)(\vbf)  \frac{\left (g_{z_1}^{-1}(\overline{v\tau+w}) \right )_\sigma}{\lVert g_{z_1}^{-1}(v\tau+w) \rVert^{2N+s}}
\end{align}
with $g_{z_1} \in \SL_N(\R)$ such that $z_1=g_{z_1}g_{z_1}^t$, and 
\begin{align}
\Lambda_N(s) \coloneqq \Gamma \left ( N+\frac{s}{2}\right ) \frac{(-1)^{N-1}i^{N}}{2\pi^{N+\frac{s}{2}}}.
\end{align}
\end{prop}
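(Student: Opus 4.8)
The plan is to compute $E_{\varphi\chi}(z_1,\tau,s)$ directly from the identity $E_{\varphi\chi}(z_1,\tau,s)=\int_0^\8\Theta'_{\four_2(\psi\chi)}(z,\tau)\,u^{-s}\,\frac{du}{u}$ recorded just above the statement, substituting the explicit formulas already available. By Proposition \ref{explicitpsi} together with $\psi=e^{-2\pi B(v,w)}\psi^0$ we have $\psi(z,\vbf)=(-1)^{N-1}2^{-N}\pi^{-N/2}\sum_\sigma\omega(g,1)\phi_\sigma(\vbf)\otimes\iotau\lambda(\sigma)$ for $z=gg^t$. Since $\four_2$ intertwines $\omega$ with $\omega'$ and acts only on the Schwartz variable, Poisson summation gives
\[
\Theta'_{\four_2(\psi\chi)}(z,\tau)=(-1)^{N-1}2^{-N}\pi^{-N/2}\,y^{-N/2}\sum_\sigma\Bigl(\sum_{\vbf\in\Q^{2N}}\four_2(\chi)(\vbf)\,\omega'(g,h_\tau)\four_2(\phi_\sigma)(\vbf)\Bigr)\otimes\iotau\lambda(\sigma),
\]
where $y^{-N/2}=j(h_\tau,i)^N$. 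By Proposition \ref{phisigma} the summand at $\vbf=0$ vanishes, and by Proposition \ref{analytic continuation 2} the double sum-integral $\sum_{\vbf\neq0}\int_0^\8$ is termwise absolutely convergent for $\re(s)>N$; for such $s$ we may therefore interchange summation and integration.

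The core computation is the evaluation, for fixed $\vbf=(v,w)\neq0$ and with $g=ug_1$, $g_1\in\SL_N(\R)$, of $\int_0^\8\omega'(g,h_\tau)\four_2(\phi_\sigma)(\vbf)\,u^{-s}\,\frac{du}{u}$. From $\omega'(g,h)\phi(\vbf)=|\det g|^{-1}\phi(g^{-1}\vbf h)$ and $\vbf h_\tau=(\sqrt y\,v,\tfrac1{\sqrt y}(xv+w))$ one gets $g^{-1}\vbf h_\tau=(\tfrac{\sqrt y}{u}g_1^{-1}v,\tfrac1{u\sqrt y}g_1^{-1}(xv+w))$. The two elementary identities (valid because $g_1,v,w$ are real, so the mixed terms drop out)
\[
i\,\tfrac{\sqrt y}{u}g_1^{-1}v+\tfrac1{u\sqrt y}g_1^{-1}(xv+w)=\tfrac1{u\sqrt y}\,g_1^{-1}(v\tau+w),\qquad
\bigl\lVert\tfrac{\sqrt y}{u}g_1^{-1}v\bigr\rVert^2+\bigl\lVert\tfrac1{u\sqrt y}g_1^{-1}(xv+w)\bigr\rVert^2=\tfrac1{u^2y}\bigl\lVert g_1^{-1}(v\tau+w)\bigr\rVert^2,
\]
together with the formula for $\four_2(\phi_\sigma)$ in Proposition \ref{phisigma} and the homogeneity of degree $N$ of $\vbf\mapsto(\vbf)_\sigma$, yield
\[
\omega'(g,h_\tau)\four_2(\phi_\sigma)(\vbf)=i^N2^N\pi^{N/2}\,u^{-2N}y^{-N/2}\bigl(g_1^{-1}(\overline{v\tau+w})\bigr)_\sigma\exp\!\bigl(-\pi u^{-2}y^{-1}\lVert g_1^{-1}(v\tau+w)\rVert^2\bigr).
\]
Since $v\tau+w\neq0$ (as $\tau\notin\R$), the remaining integral is a Gamma integral: the substitution $t=\pi u^{-2}y^{-1}\lVert g_1^{-1}(v\tau+w)\rVert^2$ gives $\int_0^\8 u^{-2N-s}e^{-\pi u^{-2}y^{-1}\lVert g_1^{-1}(v\tau+w)\rVert^2}\frac{du}{u}=\tfrac12\Gamma(N+\tfrac s2)\pi^{-(N+s/2)}y^{N+s/2}\lVert g_1^{-1}(v\tau+w)\rVert^{-(2N+s)}$.

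It remains to collect constants. Multiplying the prefactor $(-1)^{N-1}2^{-N}\pi^{-N/2}y^{-N/2}$ by $i^N2^N\pi^{N/2}y^{-N/2}$ and by $\tfrac12\Gamma(N+\tfrac s2)\pi^{-(N+s/2)}$, the powers of $2$ cancel, $\pi^{-N/2}\pi^{N/2}\pi^{-(N+s/2)}=\pi^{-(N+s/2)}$ and $y^{-N/2}y^{-N/2}y^{N+s/2}=y^{s/2}$, so the scalar multiplying each $\sigma$-summand is exactly $\Lambda(s)\,y^{s/2}$ with $\Lambda(s)=\Gamma(N+\tfrac s2)\tfrac{(-1)^{N-1}i^N}{2\pi^{N+s/2}}$. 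Hence the $\sigma$-component of $E_{\varphi\chi}(z_1,\tau,s)$ equals $\Lambda(s)y^{s/2}\sum_{\vbf\neq0}\four_2(\chi)(\vbf)\dfrac{(g_1^{-1}(\overline{v\tau+w}))_\sigma}{\lVert g_1^{-1}(v\tau+w)\rVert^{2N+s}}=E_{\varphi\chi}(g_1,\tau,s)_\sigma$, and summing against $\iotau\lambda(\sigma)$ gives the claimed formula. The only genuinely delicate point is the termwise interchange of sum and integral, which is exactly Proposition \ref{analytic continuation 2} (resting in turn on $\four_1(\varphi)(0)=\four_2(\varphi)(0)=0$ from Proposition \ref{phisigma}); everything else is bookkeeping of the $\omega'$-action and one Gamma integral.
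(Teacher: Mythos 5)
Your proposal is correct and follows essentially the same route as the paper: rewrite $E_{\varphi\chi}(z_1,\tau,s)$ via Poisson summation as $\int_0^\8\Theta'_{\four_2(\psi\chi)}u^{-s}\frac{du}{u}$, interchange sum and integral for $\re(s)>N$ using the termwise absolute convergence of Proposition \ref{analytic continuation 2}, compute $\omega'(g,h_\tau)\four_2(\phi_\sigma)$ explicitly from Propositions \ref{explicitpsi} and \ref{phisigma}, and evaluate the resulting Gamma integral. Your constant bookkeeping and the identities for $g^{-1}\vbf h_\tau$ all check out against the paper's intermediate formula.
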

\begin{proof}
Using Lemma \ref{lem intertwiner} and Poisson summation, we write
\begin{align}
E_{\varphi, \chi}(z_1,\tau,s)& = j(h_\tau,i)^{N} \int_{0}^\8 \sum_{\vbf \in V} \chi(\vbf)  \omega(1,h_\tau)\psi(z,\vbf)u^{-s} \frac{du}{u}\nonumber \\
& = j(h_\tau,i)^{N} \sum_{\vbf \in V} \four_2(\chi)(\vbf) \int_{0}^\8 \omega'(1,h_\tau)\four_2(\psi)(z,\vbf)u^{-s} \frac{du}{u},
\end{align}
where the change of sum and integral is valid for $\re(s)>N$. Using the explicit formula for $\psi$ in Proposition \ref{explicitpsi} and for $\four_2(\varphi_\sigma)$ in Proposition \ref{phisigma}, we find that
\begin{align*}
   & j(h_\tau,i)^{N}\omega'(1,h_\tau)\four_2(\psi)(z,\vbf) \nonumber \\
    & \hspace{2cm} = \frac{(-1)^{N-1}i^N}{y^N} \sum_{\sigma}  \frac{\left (g_{z_1}^{-1}(\overline{v\tau+w}) \right )_\sigma}{u^{2N}} e^{-\frac{\pi}{yu^{2}}\lVert g_{z_1}^{-1}(v\tau+w) \rVert^2}\otimes \left (  \iota_{u \frac{\partial}{\partial u}} \lambda(\sigma)  \right )
\end{align*}
so that for $\vbf \neq 0$ we have
\begin{align} \label{sigmhatform}
    & j(h_\tau,i)^{N}\int_{0}^\8 \omega'(1,h_\tau)\four_2(\psi)(z,\vbf) u^{-s} \frac{du}{u}  \nonumber \\
    & \hspace{2cm} \qquad \qquad=\Lambda_N(s)  \sum_{\sigma}  \frac{y^{\frac{s}{2}}  \left(g_{z_1}^{-1}(\overline{v\tau+w}) \right )_\sigma}{\lVert g_{z_1}^{-1}(v\tau+w) \rVert^{2N+s}} \otimes \left ( \iota_{u \frac{\partial}{\partial u}} \lambda(\sigma)  \right ).
\end{align}
\end{proof}


\subsection{The Fourier coefficients of the Eisenstein class} \label{sec:fourierexpansion} The Weil representation acts by
\begin{align}   \omega(1,h_\tau)\varphi(z,\vbf)=\sqrt{y}^{-N}\varphi^0(z,\sqrt{y}\vbf)q^{Q(\vbf)},
\end{align}
so that
\begin{align}
\Theta_{\psi, \chi}(z,\tau) & = \sum_{\vbf \in V}  \chi(\vbf) \psi^0(z,\sqrt{y}\vbf)q^{Q(\vbf)} \in \Omega^N(X)^\Gamma \otimes C^\8(\HH)^{\Gamma'}.
\end{align}
For a class $[\vbf] \in \Gamma \backslash V$ we set
\begin{align}
E_{[\vbf]}(z_1,y)\coloneqq \int_0^\8 \sum_{\vbf' \in \Gamma \vbf}  \psi^0(z,\sqrt{y}\vbf')\frac{du}{u} \in \Omega^{N-1}(S_\Gamma) \otimes C^\8(\R_{>0}).
\end{align}
By the estimate given in Lemma \ref{lemmapolynomial}, the form $\int_0^\8\psi^0(z,\vbf)\frac{du}{u}$ is rapidly decreasing as $\vbf$ is summed over regular vectors of fixed length $Q(\vbf)$. Thus, if $\vbf$ is regular we can change the order of integration and summation to write
\begin{align}
E_{[\vbf]}(z_1,y)=  \sum_{\vbf' \in \Gamma \vbf}  \int_0^\8 \psi^0(z,\sqrt{y}\vbf')\frac{du}{u},
\end{align}
whereas for the singular vectors one needs to add $u^{-s}$ to change the order of integration and summation. We can rewrite the sum
\begin{align}
\int_0^\8 \Theta_{\psi, \chi}(z,\tau)\frac{du}{u} & = \int_0^\8 \sum_{n \in \Z} \sum_{\substack{\vbf \in \Gamma \backslash V \\ Q(\vbf)=n}} \sum_{\vbf' \in \Gamma \vbf}  \chi(\vbf) \psi^0(z,\sqrt{y}\vbf)q^{Q(\vbf)} \frac{du}{u} \\
&=  \sum_{n \in \Z} \left ( \sum_{\substack{\vbf \in \Gamma \backslash V \\ Q(\vbf)=n}} \chi(\vbf) E_{[\vbf]}(z_1,y) \right )q^n.
\end{align}

Thus, the $n$-th Fourier coefficient of $E_{\varphi, \chi}(z_1,\tau)$ is given by
\begin{align}
a_n(E_{\varphi, \chi}(z_1,\tau))=\sum_{\substack{\vbf \in \Gamma \backslash V \\ Q(\vbf)=n}}\chi(\vbf)  E_{[\vbf]}(z_1,y).
\end{align}
For the constant term, we will consider the regular and singular vectors separately. Since $\psi(z,0)=0$, we can remove $0$ from the summation and sum over $\Lcal^\vee \smallsetminus \{0\}$. We write
\begin{align*}
\Lcal^\vee \smallsetminus \{0\}=\Lcal^\vee_{\sing,1} \sqcup \Lcal^\vee_{\sing,2} \sqcup \Lcal^\vee_{\reg}
\end{align*}
where
\begin{align*}
\Lcal^\vee_{\sing,2} \coloneqq  \left \{ \vbf \in \Lcal^\vee  \ \vert \ v = 0, \ w \neq 0\right \}, & \quad
\Lcal^\vee_{\sing,1} \coloneqq \left \{ \vbf \in \Lcal^\vee  \ \vert \ v\neq0, \ w = 0\right \}, \\
\Lcal^\vee_\reg \coloneqq &\left \{ \vbf \in \Lcal^\vee  \ \vert \ v\neq0, \ w \neq 0\right \}.
\end{align*}
The constant term splits as
\begin{align} \label{eq constant term}
a_0(E_{\varphi, \chi}(z_1,\tau)) & =\sum_{\vbf \in \Gamma \backslash \Lcal^\vee_{\sing,1} } \chi(\vbf) E_{[\vbf]}(z_1,y) + \sum_{\vbf \in \Gamma \backslash \Lcal^\vee_{\sing,2}} \chi(\vbf) E_{[\vbf]}(z_1,y) \nonumber \\
& + \sum_{\substack{\vbf \in \Gamma \backslash \Lcal^\vee_{\reg} \\ Q(\vbf)=0}} \chi(\vbf) E_{[\vbf]}(z_1,y).
\end{align}

In the next section, we will show that the  regular orbits $E_{[\vbf]}(z_1,y)$ with $Q(\vbf) \leq 0$ are trivial in cohomology.

\subsubsection{Pairings}We have a pairing
\begin{align}
  H_c^{{(N^2-N)/2}}(S_\Gamma;\C) \times H^{N-1}(S_\Gamma;\C)  \longrightarrow \C,
\end{align}
defined by
\begin{align}
 ( \omega, \omega' )=\int_{S_\Gamma} \omega \wedge \omega'.
\end{align}
On the other hand, by Poincaré duality we have $H_c^{{(N^2-N)/2}}(S_\Gamma;\C) \simeq H_{N-1}(S_\Gamma;\C)$. Hence, the pairing is
\begin{align}
H_{N-1}(S_\Gamma;\C) \times H^{N-1}(S_\Gamma;\C) \longrightarrow \C,
\end{align}
where
\begin{align}
 ( Z, \omega')=(-1)^{\frac{N(N-1)^2}{2}}\int_Z \omega'.
\end{align}
The Poincaré dual  $\omega_Z$ in $H_c^{{(N^2-N)/2}}(S_\Gamma;\C)$ of a class $Z$ in $H_{N-1}(S_\Gamma;\C)$ is such that
\begin{align}
 ( Z, \omega')=(-1)^{\frac{N(N-1)^2}{2}} \int_Z \omega'=\int_{S_\Gamma} \omega_Z \wedge \omega'=[ \omega_Z, \omega']
\end{align}
for any $\omega' \in H^{N-1}(S_\Gamma;\C)$. Similarly, by Poincaré duality we have $H^{N-1}(S_\Gamma;\C) \simeq H^{\BM}_{(N^2-N)/2}(S_\Gamma;\C)$, where the Borel-Moore homology is the homology of locally finite chains. Thus, the pairing becomes
\begin{align}
H_c^{{(N^2-N)/2}}(S_\Gamma;\C) \times H^{\BM}_{(N^2-N)/2}(S_\Gamma;\C) \longrightarrow \C,
\end{align}
where
\begin{align}
 ( \omega,Z')=\int_{Z'} \omega.
\end{align}
Finally, combining the two gives the intersection pairing in homology
\begin{align}
H_{N-1}(S_\Gamma;\C) \times H^{\BM}_{(N^2-N)/2}(S_\Gamma;\C) \longrightarrow \C,
\end{align}
where
\begin{align}
 (Z, Z') &=(-1)^{\frac{N(N-1)^2}{2}}\int_Z \omega_{Z'} =\int_{Z'} \omega_Z=\int_{S_\Gamma} \omega_Z \wedge \omega_{Z'}=(\omega_Z,\omega_{Z'}).
\end{align}
Note that we have $(Z, Z')=(-1)^{\frac{N(N-1)^2}{2}}( Z', Z)$.

\subsection{The nonpositive regular orbits} \label{subsec:negative fourier}
\begin{prop}\cite[Proposition.~6.15]{botu} \label{projection}
    For $\eta \in \Omega^k_{\rd}(X)$ and $\omega \in \Omega_c^{\dim(S)-k+1}(S)$ we have the projection formula
    \begin{align}
      \int_S \omega \wedge \left   ( p_\ast \eta \right )= \int_X  ( p^\ast \omega) \wedge \eta.  
    \end{align}
\end{prop}

Combining with Proposition \ref{thomformprop2}, we deduce the following.
\begin{prop} \label{thomformprop3} Let $\vbf$ be a regular vector. 
\begin{enumerate} \item  If $Q(\vbf)>0$, then the form $p_\ast(\varphi^0(z,\vbf))$ is a Poincaré dual to $\Gamma_\vbf \backslash S_\vbf \subset \Gamma_\vbf \backslash S$. Thus, if $\omega \in \Omega_c^{(N^2-N)/2}(\Gamma_\vbf \backslash S)$ is a compactly supported form, then
\begin{align} 
\int_{\Gamma_\vbf \backslash S} \omega \wedge p_\ast(\varphi^0(z,\vbf))=\int_{\Gamma_\vbf \backslash S_\vbf} \omega.
\end{align}
\item  If $Q(\vbf) \leq 0$, then the form $p_\ast \left ( \varphi^0(z,\vbf) \right )$ is exact.
\end{enumerate}
\end{prop}
\begin{proof}
The first follows from (1) of Proposition \ref{thomformprop2} and the projection formula in Proposition \ref{projection}, since
\begin{align} 
\int_{\Gamma_\vbf \backslash S} \omega \wedge p_\ast(\varphi^0(z,\vbf)) & = \int_{\Gamma_\vbf \backslash X} p^\ast (\omega) \wedge \varphi^0(z,\vbf) \\
&=\int_{\Gamma_\vbf \backslash X_\vbf} p^\ast (\omega) \\
&=\int_{\Gamma_\vbf \backslash S_\vbf} \omega.
\end{align}
In the last step, we used that the projection restricts to a diffeomorphism $X_\vbf \longrightarrow S_\vbf$. Note that $p^\ast (\omega)$ is not compactly supported in $\Gamma_\vbf \backslash X$, as in Proposition \ref{thomformprop2}, since it constant in $u$. However, since $\vbf$ is regular, the form $\varphi^0(z,\vbf)$ is rapidly decreasing as $u \rightarrow 0$ and $\8$, and the integral against $p^\ast (\omega)$ is convergent.

By Proposition \ref{thomformprop2} we have \begin{align}
\varphi^0(z,\vbf)=d\int_1^\8 \alpha^{0}(z,\sqrt{t}\vbf) \frac{dt}{t}
\end{align}
for negative regular vector. Thus, the second part of the proposition follows from Proposition \ref{dpushforward}. Note that $\int_1^\8 \alpha^{0}(z,\sqrt{t}\vbf) \frac{dt}{t}$ is also rapidly decreasing as $u$ goes to $0$ and $\8$.
\end{proof}

Since $p_\ast\left ( \varphi^0(z,\vbf) \right )$ is exact if $\vbf$ is regular and negative, we deduce the following.

\begin{prop}  \label{exactform} If $[\vbf]$ is a regular orbit with $Q(\vbf) \leq 0$, then
\begin{align}
    \int_{S_\Gamma} \omega \wedge E_{[\vbf]}(z_1,y)=0.
\end{align}
\end{prop}
%

\subsection{The positive regular orbits} \label{subsec:positive fourier} 
%
%
%
%
%

The following proposition implies that
\begin{align}
a_n(E_{\varphi, \chi}(z_1,\tau))=\sum_{\substack{\vbf \in \Gamma \backslash V \\ Q(\vbf)=n}}\chi(\vbf)  E_{[\vbf]}(z_1,y).
\end{align}
represents the Poincaré dual to $Z_n(\chi)$.

\begin{prop} \label{fouriercoeffn} If $[\vbf]$ is a regular orbit in $V$ with $Q(\vbf)>0$, then the form $E_{[\vbf]}(z_1,y)$ represents the Poincaré dual to $Z_{[\vbf]}$ in $H^{N-1}(S_\Gamma)$. In other words, for every compactly supported form $\omega \in \Omega_c^{(N^2-N)/2}(S_\Gamma)$ we have
\begin{align}
   \int_{S_\Gamma} \omega \wedge E_{[\vbf]}(z_1,y)=\int_{Z_{[\vbf]}} \omega.
\end{align}
\end{prop}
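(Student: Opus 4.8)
The plan is to verify directly that the closed form $E_\vbf(z_1,y)$ pairs with every closed compactly supported $\omega\in\Omega_c^{(N^2-N)/2}(S_\Gamma)$ by integration over $Z_\vbf$; since $Z_\vbf\in Z^{\BM}_{(N^2-N)/2}(S_\Gamma;\Z)$, this is exactly the assertion that $E_\vbf(z_1,y)$ represents the Poincar\'e dual of $Z_\vbf$ in $H^{N-1}(S_\Gamma)$. That $E_\vbf(z_1,y)$ is closed follows from Proposition \ref{dpushforward} applied to the closed form $\Phi_y\coloneqq\sum_{\vbf'\in\Gamma\vbf}\varphi^0(z,\sqrt y\vbf')$, using that $\vptil^0(z,\sqrt y\vbf)$ is rapidly decreasing along the fibre $\R_{>0}$ of $p\colon X\to S$; Proposition \ref{exactform} moreover shows that it is exact on $S_\Gamma\smallsetminus Z_\vbf$, so its class is concentrated on $Z_\vbf$. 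The fibre decay — which is used repeatedly below — holds precisely because $\vbf$ is regular: writing $g=ug_1$, the Gaussian factor of $\varphi^0(z,\sqrt y\vbf)$ is $\exp\bigl(-\pi y(u^{-2}\lVert g_1^{-1}v\rVert^2+u^2\lVert g_1^tw\rVert^2)\bigr)$ up to the factor $e^{2\pi yB(\vbf)}$, and $v,w\neq0$ force super-polynomial decay as $u\to0$ and $u\to\8$; the same applies to $\alpha^0(z,\sqrt y\vbf)$.

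First I would reduce the pairing to an integral on $X_\Gamma$. From $\psi^0=(-1)^{N-1}\iota_{u\frac{\partial}{\partial u}}\varphi^0$ and the definition of the pushforward one has $E_\vbf(z_1,y)=p_\ast\Phi_y$, the sum converging uniformly on compacta for regular $\vbf$ (Lemma \ref{prop uniformconv}). The projection formula (Proposition \ref{projection}), whose hypotheses hold since $\Phi_y$ is fibrewise rapidly decreasing and $p^\ast\omega$ is supported over the compact set $\supp\omega$, gives, up to the Koszul sign from commuting $\omega$ past $E_\vbf$,
\[
\int_{S_\Gamma}\omega\wedge E_\vbf(z_1,y)=\pm\int_{X_\Gamma}\Phi_y\wedge p^\ast\omega=:\pm I(y).
\]

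Next I would show that $I(y)$ is independent of $y$ and evaluate it by letting $y\to\8$. Differentiating under the integral and using $y\,\partial_y\varphi^0(z,\sqrt y\vbf')=d\alpha^0(z,\sqrt y\vbf')$ from \eqref{alphatilprop} together with $d\,p^\ast\omega=0$, Stokes' theorem on the boundaryless manifold $X_\Gamma$ (legitimate because $\sum_{\vbf'\in\Gamma\vbf}\alpha^0(z,\sqrt y\vbf')$ is again rapidly decreasing along the fibre) yields $\partial_yI(y)=0$. To compute the constant, fix a cutoff $\chi_R$ on $X_\Gamma$ depending only on the fibre coordinate $u$, equal to $1$ on $\{1/R\le u\le R\}$ and supported in $\{1/2R\le u\le 2R\}$. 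Then $\chi_R\,p^\ast\omega\in\Omega_c^{(N^2-N)/2}(X_\Gamma)$, and by the same Mathai--Quillen concentration argument as in Proposition \ref{thomformprop2}, applied on $X_\Gamma$ to the closed fibrewise-rapidly-decreasing form $\Phi_y$ (which represents the Poincar\'e dual of the image of $X_\vbf$ in $X_\Gamma$, since $\varphi^0(z,\vbf)=s_\vbf^\ast U$ and $X_\vbf=s_\vbf^{-1}(0)$), one gets
\[
\lim_{y\to\8}\int_{X_\Gamma}\Phi_y\wedge\chi_R\,p^\ast\omega=\int_{\Gamma_\vbf\backslash X_\vbf}\chi_R\,p^\ast\omega.
\]
The complementary term $\int_{X_\Gamma}\Phi_y\wedge(1-\chi_R)p^\ast\omega$ tends to $0$ as $y\to\8$ for each fixed $R$: on the compact set $\supp\omega$ and for $u\notin[1/R,R]$ the exponent $u^{-2}\lVert g_1^{-1}v'\rVert^2+u^2\lVert g_1^tw'\rVert^2-2B(\vbf)\ge0$ is bounded below by a positive constant (the image of $X_\vbf$ meets $\supp(p^\ast\omega)$ in a compact set lying in a fixed range of $u$) and grows like $u^{\pm2}$ at the ends, which dominates the polynomial growth of $\Phi_y$. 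Since for $R$ large $\chi_R\,p^\ast\omega$ agrees with $p^\ast\omega$ on that compact set, we obtain $I(y)=\int_{\Gamma_\vbf\backslash X_\vbf}p^\ast\omega$ for every $y$.

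Finally, Proposition \ref{diffeomorphism} identifies $p\colon X_\vbf\to S_\vbf$ as a $\Gamma_\vbf$-equivariant diffeomorphism, so $\int_{\Gamma_\vbf\backslash X_\vbf}p^\ast\omega=\int_{\Gamma_\vbf\backslash S_\vbf}p_\vbf^\ast\omega=\int_{Z_\vbf}\omega$ by the very definition of the integral over the locally finite cycle $Z_\vbf$. It remains to check that the sign $\pm$ above is $+$: the orientation of $X_\vbf$ (hence of $S_\vbf$ and $Z_\vbf$) fixed in the orientability discussion of Section \ref{locally symmetric space} is exactly the one for which the Mathai--Quillen normalisation makes $U$ have fibre integral $+1$, compatibly with the sign conventions of Proposition \ref{thomformprop2} and of the pairing. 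Hence $[\omega,E_\vbf(z_1,y)]=\int_{Z_\vbf}\omega=[\omega,Z_\vbf]$ for all closed compactly supported $\omega$, which is the claim. The main obstacle is the previous paragraph: running the Mathai--Quillen concentration $y\to\8$ against the non-compactly-supported (but fibrewise constant) form $p^\ast\omega$ — both the cutoff/remainder estimate and the $y$-independence of $I(y)$ rely crucially on the rapid decay of $\varphi^0$ and $\alpha^0$ along the $\R_{>0}$-fibre, which is precisely what regularity of $\vbf$ supplies; for singular $\vbf$ this fails, which is why those orbits are treated separately.
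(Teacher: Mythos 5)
Your overall strategy --- deform $y\to\8$ using the transgression form $\alpha^0$ to show the pairing is $y$-independent, then localize onto the zero locus of $s_\vbf$ by Mathai--Quillen concentration --- is exactly the paper's; the only organizational difference is that you run the argument upstairs on $X_\Gamma$ against $p^\ast\omega$ (via the projection formula) where the paper first pushes forward to $S_\Gamma$ and then unfolds the sum over $\Gamma_\vbf\backslash\Gamma$. That reorganization is legitimate, and your fibre cutoff $\chi_R$ correctly isolates the only new non-compactness it introduces.

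The genuine gap is the step
\begin{align}
\lim_{y\to\8}\int_{X_\Gamma}\Phi_y\wedge\chi_R\,p^\ast\omega=\int_{\Gamma_\vbf\backslash X_\vbf}\chi_R\,p^\ast\omega,
\end{align}
which you justify ``by the same Mathai--Quillen concentration argument as in Proposition \ref{thomformprop2}.'' That proposition concerns a \emph{single} vector $\vbf$ on the cover $\Gamma_\vbf\backslash X$, paired against a form that is genuinely compactly supported there; your $\Phi_y$ is an \emph{infinite sum} over the orbit $\Gamma\vbf$, living on $X_\Gamma$. To reduce to Proposition \ref{thomformprop2} you must either unfold the integral to $\Gamma_\vbf\backslash X$, where the pulled-back test form is no longer compactly supported, or equivalently interchange $\lim_{y\to\8}$ with the sum over $\vbf'\in\Gamma\vbf$. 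This interchange is the technical heart of the proposition and is not automatic: it requires (i) the finiteness statement that only finitely many $\vbf'=(v',w')$ in the orbit have $X_{\vbf'}$ meeting a given compact subset --- which follows because $(w')^tz_1w'=n$ and $(v')^tz_1^{-1}v'=n$ admit only finitely many solutions $w',v'$ for $z_1$ in a compact set --- and (ii) a dominated-convergence argument showing that the tail of the sum (the terms whose cycles miss the support) contributes $0$ in the limit. The paper's proof is devoted almost entirely to exactly this (covering $\supp\omega$ by contractible relatively compact opens, lifting to $S$, and splitting the orbit into the finite set $S_1$ and the remainder $S_2$). Your parenthetical assertion that $\Phi_y$ ``represents the Poincar\'e dual of the image of $X_\vbf$ in $X_\Gamma$'' is the conclusion to be proved, not a justification. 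The same finiteness input is needed to make your remainder estimate for $(1-\chi_R)p^\ast\omega$ uniform over the orbit rather than term by term, and your closing sign discussion is asserted rather than checked; but the limit--sum interchange is the essential missing piece.
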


\begin{proof} It follows from a standard unfolding argument as in \cite{kmcjm}. See also \cite[Lemma.~2.1]{KM82}. Let $F \subset S$ be a fundamental domain for $\Gamma$ in $S$, and $F_\vbf=\cup_{\gamma \in \Gamma_\vbf \backslash \Gamma} F$ a fundamental domain for $\Gamma_\vbf$. We view $\omega$ as $\Gamma$-invariant form with support contained in $F$. 
 and unfolding the integral gives
\begin{align}
\int_{F} \omega \wedge E_{[\vbf]}(z_1,y) & = \int_{F} \omega \wedge \left ( \sum_{\gamma \in \Gamma_\vbf \backslash \Gamma} p_\ast \varphi^0(z,\rho_{\gamma}^{-1}\vbf) \right ) \\
&= \sum_{\gamma \in \Gamma_\vbf \backslash \Gamma}  \int_{F} \omega \wedge \left ( p_\ast \varphi^0(z,\rho_{\gamma}^{-1}\vbf) \right ) \\
& = \sum_{\gamma \in \Gamma_\vbf \backslash \Gamma}  \int_{F}  \gamma^\ast \left ( \omega \wedge p_\ast \varphi^0(z,\vbf) \right ) \\
 & = \sum_{\gamma \in \Gamma_\vbf \backslash \Gamma} \int_{\gamma F} \omega \wedge p_\ast \varphi^0(z,\vbf) \\
& = \int_{F_\vbf} \omega \wedge p_\ast \varphi^0(z,\vbf) \\
\end{align}
where in the second step we can bring the integral inside the sum, since the sum of $\varphi^0(z,\vbf)$ over regular orbit is rapidly decreasing by Lemma \ref{lemmapolynomial}. By Proposition \ref{thomformprop3},  we then deduce
\begin{align}
 \int_{F_\vbf} \omega \wedge p_\ast \varphi^0(z,\vbf) & = \int_{F_\vbf \cap S_\vbf} \omega = \int_{Z_{[\vbf]}} \omega.
\end{align}

\end{proof}

\subsection{The constant term}
\label{eulerclass}
In this section, we show that the constant term is given by the Eisenstein classes constructed by Bergeron-Charollois-Garcia. Let $L$ be a lattice in $\Z^N$ as previously. The quotient 
\begin{align}
 \Tcal \coloneqq \Gamma \backslash (S \times T) & \longrightarrow \Gamma \backslash S
\end{align}
is a torus bundle with fibers $T\coloneqq \R^N/L$, where $\Gamma$ acts by $\gamma(z,v+L)=(\gamma z,\gamma v+L)$.

\subsubsection{Topological Eisenstein classes} We recall the construction of the canonical Eisenstein classes following Bergeron-Charollois-Garcia \cite{bcg}. Similar classes have been previously considered by Bismut-Cheeger \cite{bc}, and Faltings \cite{faltings} for Siegel spaces. Note that the space $X$ (resp. $S^+$) in {\em loc. cit.} is the space $S$ (resp. $X$) in this paper. Let $m$ be an integer coprime to $\vert D_L\vert$ and let $\Tcal[m]=\Gamma \backslash (S \times T[m])$ be the $m$-torsion points on $\Tcal$, and $\Tcal_0=\Gamma \backslash (S \times \{0\})$ the image of the zero section. The Thom isomorphism induces an isomorphism
\begin{align}
H^0(\Tcal[m]) \longrightarrow H^N(\Tcal,\Tcal \smallsetminus \Tcal[m]).
\end{align}
Sullivan \cite{S75} shows that the image of the class
\begin{align}
[\Tcal[m]-m^N\Tcal_0]=0 \in H^N(\Tcal,\Tcal \smallsetminus \Tcal[m])
\end{align}
is trivial in cohomology. The long exact sequence  
\begin{equation}
\hspace*{-2.5cm}  \begin{tikzcd}
H^{N-2}(\Tcal \smallsetminus \Tcal[m]) \rar[] & H^{N-1}(\Tcal,\Tcal \smallsetminus \Tcal[m]) \rar[] & H^{N-1}(\Tcal)\ar[ out=-10, in=170]{dll} \\
H^{N-1}(\Tcal \smallsetminus\Tcal[m]) \rar[] & H^N(\Tcal,\Tcal \smallsetminus \Tcal[m]) \rar[] & H^N(\Tcal)
\end{tikzcd}    
\end{equation}
shows that it has a preimage in $H^{N-1}(\Tcal \smallsetminus\Tcal[m])$, only determined up to  an element in $H^{N-1}(\Tcal)$.

To find a canonical representatives, let $a$ be an integer coprime to $m$. It induces a finite covering map $\Tcal \longrightarrow \Tcal$ and a pullback 
\begin{align}
a^\ast \colon H^N(\Tcal,\Tcal\smallsetminus\Tcal[m]) \longrightarrow H^N (\Tcal,\Tcal \smallsetminus \Tcal[am]).
\end{align} It also defines a pushforward
\begin{align}
a_\ast \colon \Omega^N(S \times T) \longrightarrow \Omega^N(S \times T), \quad a_\ast(\omega)(z_1,v) =\sum_{\substack{w \in T \\ aw=v}} \omega(z,w)
\end{align}
by summing over the fiber. The map is $\Gamma$-equivariant, and
\begin{align}
a_\ast \left ( \Omega^{N-1}(S \times (T\smallsetminus T[am])\right ) \subset \Omega^{N-1}(S \times (T\smallsetminus T[m]).
\end{align}  Thus, it induces a pushforward map
\begin{align}
a_\ast \colon H^N(\Tcal,\Tcal \smallsetminus \Tcal[am]) \longrightarrow H^N(\Tcal,\Tcal \smallsetminus \Tcal[m]).
\end{align}
Finally, the pullback by the inclusion $T[m]\subset T[am]$ is
\begin{align}
i^\ast \colon H^N(\Tcal \smallsetminus \Tcal[m]) \longrightarrow H^N(\Tcal \smallsetminus \Tcal[am]).
\end{align} 
\begin{thm}\cite[Section.~3]{bcg}There is a unique class 
\begin{align}
z_m \in H^{N-1}(\Tcal\smallsetminus \Tcal[m];\Q)
\end{align} such that $a_\ast \iota^\ast z_m=z_m$ for all integers $a$ coprime to $m$.
\end{thm}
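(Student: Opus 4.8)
The plan is to establish existence and uniqueness separately. For uniqueness, suppose $z_m$ and $z_m'$ are two classes in $H^{N-1}(\Tcal \smallsetminus \Tcal[m];\Q)$ satisfying $a_\ast \iota^\ast z = z$ for all $a$ coprime to $m$. Their difference $\delta \coloneqq z_m - z_m'$ then also satisfies $a_\ast \iota^\ast \delta = \delta$. The key point is to understand how $a_\ast \iota^\ast$ acts on $H^{N-1}(\Tcal \smallsetminus \Tcal[m];\Q)$: the composition $a_\ast \iota^\ast$ equals the transfer (norm) map for the $a^N$-fold covering $\Tcal \smallsetminus \Tcal[am] \to \Tcal \smallsetminus \Tcal[m]$ composed with the fact that $\iota^\ast$ is restriction along $\Tcal \smallsetminus \Tcal[am] \hookrightarrow \Tcal \smallsetminus \Tcal[m]$. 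On the fiber $T \smallsetminus T[m] = (\R^N/L)\smallsetminus (m^{-1}L/L)$, multiplication by $a$ acts on cohomology with known eigenvalues (powers of $a$), and the averaging forces $\delta$ to lie in an eigenspace that, after imposing the relation for two coprime integers $a, a'$ with $a^k \neq (a')^k$, must vanish. Concretely, I would use the Leray--Hirsch / Wang exact sequence for the torus bundle to reduce to a statement about $H^\bullet(T \smallsetminus T[m])$ as a module over the ring generated by the $a_\ast \iota^\ast$ operators, and show the common fixed subspace is $0$; this is exactly the argument sketched in \cite[Section~3]{bcg}.

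For existence, I would start from Sullivan's vanishing: the class $[\Tcal[m] - m^N \Tcal_0]$ maps to $0$ in $H^N(\Tcal)$ under $H^N(\Tcal,\Tcal \smallsetminus \Tcal[m]) \to H^N(\Tcal)$, so by the long exact sequence of the pair it admits a preimage $\tilde z_m \in H^{N-1}(\Tcal \smallsetminus \Tcal[m];\Q)$, well-defined modulo the image of $H^{N-1}(\Tcal)$. To pin down a canonical choice, I would average over the covering operators: set
\begin{align}
z_m \coloneqq \lim_{k \to \infty} \frac{1}{N(k)} \sum_{a} a_\ast \iota^\ast \tilde z_m,
\end{align}
where the sum runs over a suitable finite set of integers $a$ coprime to $m$ (for instance, residues mod some modulus, or a Cesàro-type average over $a$ in an arithmetic progression), and $N(k)$ is the normalizing count. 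The point is that $a_\ast \iota^\ast$ fixes the class $[\Tcal[m] - m^N \Tcal_0]$ in relative cohomology (because $a_\ast a^\ast = \deg = a^N$ on the Thom class and the torsion points are permuted correctly — this is the compatibility $a_\ast \iota^\ast [\Tcal[am] - (am)^N \Tcal_0] = [\Tcal[m]-m^N\Tcal_0]$ combined with $\iota^\ast$ of the relative class), so each averaged term is still a preimage; the ambiguity lives in $H^{N-1}(\Tcal)$, which is a fixed finite-dimensional $\Q$-vector space on which the operators $a_\ast \iota^\ast$ act without nonzero common fixed vectors except possibly a controlled piece, and the averaging projects onto that piece. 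Checking that the limit exists and is independent of the auxiliary choices, and that it genuinely satisfies $a_\ast \iota^\ast z_m = z_m$ for \emph{all} $a$ coprime to $m$ (not just those used in the average), is the technical heart.

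The main obstacle I expect is the spectral analysis of the operators $a_\ast \iota^\ast$ on $H^{N-1}(\Tcal)$ and on the ambiguity space: one must verify that the only class fixed by all of them is the one we want, which amounts to a statement about the action of the Hecke-type operators on the cohomology of the torus bundle and ultimately on $H^\bullet(S_\Gamma)$ tensored with the fiber cohomology. Equivalently, one needs that $a_\ast \iota^\ast - 1$ is invertible on the relevant complement, so that the preimage can be rigidified uniquely. I would handle this by decomposing $H^{N-1}(\Tcal)$ via the Leray spectral sequence of $\Tcal \to S_\Gamma$ into pieces $H^{p}(S_\Gamma) \otimes H^{q}(T)$ with $p+q = N-1$, on each of which multiplication by $a$ in the fiber acts by $a^q$ (up to identifying $H^q(T) \cong \wedge^q (L^\vee \otimes \Q)$), so $a_\ast \iota^\ast$ acts by $a^{N}\cdot a^{-q} = a^{N-q} = a^{p+1}$ after accounting for the pushforward normalization; choosing two coprime $a$'s then kills every piece with $p \geq 0$, leaving the desired rigidity. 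Since all of this is precisely the content of \cite[Section~3]{bcg}, I would cite that reference for the detailed verification and only reproduce the structure of the argument here.
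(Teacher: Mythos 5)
The paper itself gives no proof of this theorem --- it is quoted from Bergeron--Charollois--Garcia and justified only by the citation to \cite[Section~3]{bcg} --- so I am comparing your sketch against the standard argument. Your key computation is the right one: on the graded pieces $H^p(S_\Gamma)\otimes H^q(T)$ of $H^{N-1}(\Tcal;\Q)$ the operator $a_\ast\iota^\ast$ acts by $a^N\cdot a^{-q}=a^{p+1}$, so for a single $a\ge 2$ coprime to $m$ it has no nonzero fixed vector on $H^{N-1}(\Tcal;\Q)$. But the way you deploy this lemma has two genuine problems. For uniqueness, you take the difference $\delta$ of two fixed classes and propose to show that the common fixed subspace of $H^{N-1}(\Tcal\smallsetminus\Tcal[m];\Q)$ is zero. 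It is not: the theorem's own class $z_m$ is a nonzero common fixed vector, and on the cokernel of $H^{N-1}(\Tcal)\to H^{N-1}(\Tcal\smallsetminus\Tcal[m])$ (which injects into the degree-zero divisors on $\Tcal[m]$) the operators act by permutations and have invariants. Taken literally, your argument proves $z_m=0$. The statement only makes sense with the normalization implicit in the preceding paragraph of the paper: $z_m$ is required to be a preimage of the Sullivan class $[\Tcal[m]-m^N\Tcal_0]$ under the connecting map. Then the difference of two such lifts lies in the image of $H^{N-1}(\Tcal;\Q)$, and there your eigenvalue computation does kill it.

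For existence, the Ces\`aro average $\lim_k N(k)^{-1}\sum_a a_\ast\iota^\ast\tilde z_m$ does not converge: writing $\tilde z_m=z_m+w$ with $w$ in the ambiguity space, the average equals $z_m+N(k)^{-1}\sum_a a_\ast\iota^\ast w$, and on the eigenspace where $a_\ast\iota^\ast$ acts by $a^j$ with $j\ge 1$ the quantities $N(k)^{-1}\sum_a a^j$ blow up. No averaging is needed. Fix one $a_0\ge 2$ coprime to $m$; since $a_{0\ast}\iota^\ast-1$ is invertible on the ambiguity space, the affine map $x\mapsto a_{0\ast}\iota^\ast x$ on the torsor of lifts of the Sullivan class has a unique fixed point $z_m$, and since the operators commute ($a_\ast\iota^\ast\circ a'_\ast\iota^\ast=(aa')_\ast\iota^\ast$), each $a_\ast\iota^\ast z_m$ is again an $a_{0\ast}\iota^\ast$-fixed lift, hence equals $z_m$. (In particular your insistence on two coprime values of $a$ is unnecessary: $a^{p+1}\neq 1$ already for a single $a\ge 2$.)
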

Let $v_0 \in D_L$ be nonzero {\em i.e.} represented by a vector $v_0 \in L^\ast$ not in $L$. Since the order is coprime to $m$, it defines a section  
\begin{align}
v_0 \colon S_\Gamma \longrightarrow \Tcal \smallsetminus \Tcal[m].
\end{align}
The canonical Eisenstein class is the class
\begin{align}
z(v_0) \coloneqq \frac{1}{m^N-1}v_0^\ast z_m \in H^{N-1}(S_\Gamma;\Q),
\end{align}
where the normalization by $m^N-1$ is chosen so that the class does not depend on $m$.

\subsubsection{De Rham representative} We have an isomorphism
\begin{align} \label{iso bundle}
 X \times \R^N \longrightarrow E, \qquad (z,v)  \longrightarrow [g,g^{-1}v]
\end{align}
where $z=gg^t$. Let $\Phi \in \Omega^N(X \times \R^N)$ be the pullback of the Thom form $U$ by this map. Let us denote by $t_{v}$ the translation $t_{v} \colon X \times V \longrightarrow V$ defined by $t_{v}(z,v')=(z,v+v')$.  The form
\begin{align}
\Theta_\Phi(z,v') \coloneqq \sum_{v \in L} t_{v}^\ast \Phi(z,v') \in \Omega^N(X \times T)^{\Gamma}
\end{align}
is closed and $\Gamma$-invariant. By \cite[Proposition.~16]{bcg},  it is a Thom form of the torus bundle. We write
\begin{align}
\Phi(z,v')=\widetilde{\Phi}(z,v')+\Psi(z,v')\frac{du}{u}
\end{align}
where $\Phi \in \Omega^{N}(X \times T)$ and $\Psi \in \Omega^{N-1}(X \times T)$ are of degree $0$ along $\R_{>0}$. Similarly, we define
\begin{align}
\Theta_\Psi(z,v') \coloneqq \sum_{v \in L} t_{v}^\ast \Psi(z,v') \in \Omega^{N-1}(X \times T)^{\Gamma}.
\end{align}
Since the exponential factor of $\Psi$ is of the form $\exp(-u^{-1}\lVert v'+v\rVert^2)$, the form $\Theta_\Psi(z,v')$ is rapidly decreasing as $u \rightarrow 0$ if $v'$ is not in $L$. Hence, the integral
\begin{align}
E_{\Psi}(z_1,v',s) \coloneqq \int_0^\8 \Theta_\Psi(z,v') u^{-s}\frac{du}{u} \in \Omega^{N-1}(\Tcal-\Tcal_0)
\end{align}
converges for $\re(s)>0$ and away from the zero section. It has a meromorphic continuation, with at most a simple pole at $s=1$. This is the form defined by Bergeron-Charollois-Garcia in  \cite[section.~8.5]{bcg}. 

Let $v_0$ be a vector in $L^\ast$ and suppose that $\Gamma \subset \SL_N(\Z)$ preserves the class of $v_0 \in D_L$. Let us also denote by $v_0$ the corresponding section 
\begin{align}
S_\Gamma \longrightarrow \Tcal \smallsetminus \Tcal_0, \quad z \longmapsto (z,v_0).
\end{align}
The pullback defines a closed form
\begin{align}
E_{\Psi,v_0}(z_1,s)\coloneqq v_0^\ast E_\Psi(z_1,v',s) \in \Omega^{N-1}(S_\Gamma).
\end{align}

\begin{thm}  \cite[Theorem.~23]{bcg} The class $-z(v_0)$ in $H^{N-1}(S_\Gamma,\Q)$ is represented by the form $E_{\Psi,v_0}$.
\end{thm}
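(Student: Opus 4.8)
The plan is to prove the equality $[E_{\Psi,v_0}] = -z(v_0)$ in $H^{N-1}(S_\Gamma;\Q)$ by analysing the form $E_\Psi(z_1,v',0)$ on the complement of the zero section in the torus bundle: first showing that it transgresses the canonical Thom form, then establishing its equivariance under the operators $a_*\iota^*$, and finally pulling back along the torsion section $v_0$ and matching with the construction of $z_m$.

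For the transgression, note that since $\Theta_\Phi = \widetilde{\Theta}_{\Phi} + \Theta_\Psi\,\tfrac{du}{u}$ is a closed Thom form of $\Tcal\to S_\Gamma$, extracting the component with one leg along the $\R_{>0}$-factor of $X\simeq S\times\R_{>0}$ gives a transgression identity $d_1\Theta_\Psi = \pm\, u\tfrac{\partial}{\partial u}\widetilde{\Theta}_{\Phi}$ (the analogue of \eqref{transgression1}). Multiplying by $u^{-s}\tfrac{du}{u}$, integrating over $\R_{>0}$ and integrating by parts in $u$, one finds $d_1 E_\Psi(z_1,v',s) = \pm\, s\!\int_0^\8 \widetilde{\Theta}_{\Phi}\,u^{-s}\tfrac{du}{u}$ up to a boundary term at $u\to 0$ which, for $v'\notin L$, vanishes by the Gaussian decay of $\widetilde{\Theta}_{\Phi}$ off the zero section; the $u\to\8$ asymptotics of the theta-type sum $\widetilde{\Theta}_{\Phi}$ produce the simple pole of $E_\Psi$ at $s=1$, and at $s=0$ one obtains that $E_\Psi(z_1,v',0)$ transgresses the canonical "Euler form" $\eta$ of the torus bundle on $\Tcal\setminus\Tcal_0$: $d\,E_\Psi(z_1,v',0)=\eta$. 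Since $\eta$ restricts to $0$ along any torsion section, $E_{\Psi,v_0} = v_0^* E_\Psi(\,\cdot\,,0)$ is closed.

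The central point is the $a_*\iota^*$-equivariance at $s=0$. Writing $z = uz_1$ with $z_1\in S$, the explicit shape of $\Psi$ (the analogue of Proposition \ref{explicitpsi}: a Gaussian $\exp(-2\pi u^{-1}\lVert g_1^{-1}v\rVert^2)$ times a polynomial in $u^{-1/2}g_1^{-1}v$ times the $u$-independent forms $\iotau\lambda(\sigma)$) yields the scaling relation $\Psi(uz_1,v/a) = \Psi\bigl((a^2u)z_1,v\bigr)$ for every nonzero integer $a$. For $a$ coprime to $m$, using that $(\ell,v)\mapsto \ell+av$ is a bijection $L/aL\times L\to L$,
\begin{align*}
a_*\Theta_\Psi(uz_1,v') = \sum_{\substack{w\in T\\ aw=v'}}\Theta_\Psi(uz_1,w) = \sum_{v\in L}\Psi\!\left(uz_1,\tfrac{v'+v}{a}\right) = \sum_{v\in L}\Psi\bigl((a^2u)z_1,\,v'+v\bigr) = \Theta_\Psi\bigl((a^2u)z_1,\,v'\bigr),
\end{align*}
and therefore, substituting $t=a^2u$ in the defining integral,
\begin{align*}
a_*\iota^* E_\Psi(z_1,v',s) = \int_0^\8 \Theta_\Psi\bigl((a^2u)z_1,v'\bigr)\,u^{-s}\,\tfrac{du}{u} = a^{2s}\,E_\Psi(z_1,v',s)
\end{align*}
as an identity of meromorphic families of forms. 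This is exactly why one regularises with $u^{-s}$ and specialises at $s=0$, where the factor $a^{2s}$ becomes trivial.

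It then remains to match $[E_{\Psi,v_0}]$ with $-z(v_0)=-\tfrac{1}{m^N-1}v_0^*z_m$. Combining the transgression relation $d\,E_\Psi(\,\cdot\,,0)=\eta$ with the long exact sequence used to define $z_m$ and with the $a^{2s}$-equivariance just established, one identifies the relevant relative class with a multiple of $[\Tcal[m]-m^N\Tcal_0]$, the factor $m^N-1$ appearing from the degree $m^N$ of multiplication by $m$ together with the normalisation $a_*\iota^*z_m=z_m$; after pulling back by $v_0$ (which kills $\eta$) this forces $[E_{\Psi,v_0}] = -\tfrac{1}{m^N-1}v_0^*z_m = -z(v_0)$, independently of $m$. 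The leftover scalar is pinned down by a local computation around the zero section, or equivalently by the case $N=1$ of the Example following Proposition \ref{eisensteinseries}, where $E_\Psi(v',0)$ is (a multiple of) the Bernoulli polynomial $B_1(v')$ and both sides reduce to the special value of the weight-one Eisenstein series. I expect this last matching — reconciling the transgression-of-Thom-form description of $E_\Psi$ with the torsion-point construction of $z_m$ and keeping track of all orientation and normalisation conventions so as to land on exactly $-\tfrac{1}{m^N-1}$ — to be the main obstacle; the closedness step, routine in spirit, also needs some care since it relies on continuing past the pole of the $u$-integral.
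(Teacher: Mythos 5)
First, a point of comparison: the paper does not prove this statement at all — it is quoted verbatim as Theorem~23 of \cite{bcg}, so there is no internal proof to measure your attempt against. What you have written is a reconstruction of the Bergeron--Charollois--Garcia argument, and in outline it follows their strategy (transgression of the Thom form of the torus bundle, the $a_\ast\iota^\ast$-characterization of $z_m$, pullback by the torsion section). The two computations you actually carry out are sound: every occurrence of $v$ in the Mathai--Quillen form enters through $g^{-1}v=u^{-1/2}g_1^{-1}v$ and $du/u$ is scale-invariant, so $\Psi(uz_1,v/a)=\Psi\bigl((a^2u)z_1,v\bigr)$ and hence $a_\ast\iota^\ast E_\Psi(\cdot,s)=a^{2s}E_\Psi(\cdot,s)$; and $d_1\Theta_\Psi=\pm\,u\partial_u\widetilde{\Phi}$-type transgression is the correct analogue of \eqref{transgression1}.

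The genuine gap is the final matching, which you flag as ``the main obstacle'' and then do not carry out — but that step \emph{is} the theorem. The class $z_m$ is characterized by two properties: it lies in $H^{N-1}(\Tcal\smallsetminus\Tcal[m];\Q)$ with image $[\Tcal[m]-m^N\Tcal_0]$ under the connecting map, and it is fixed by $a_\ast\iota^\ast$. Your $E_\Psi(\cdot,0)$ lives on $\Tcal\smallsetminus\Tcal_0$ and is \emph{not closed} there (its differential is the limit form $\eta$), so it does not define a cohomology class to which the uniqueness statement can be applied. One must first manufacture from $E_\Psi$ an honest closed form on $\Tcal\smallsetminus\Tcal[m]$ (a combination of the type $[m]^\ast E_\Psi-m^N E_\Psi$, closed because the $\eta$-contributions cancel), verify $a_\ast\iota^\ast$-invariance for \emph{that} form, compute its image under the connecting homomorphism to be exactly $[\Tcal[m]-m^N\Tcal_0]$ with the correct sign, and only then pull back by $v_0$ and unwind the factor $m^N-1$. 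None of this is done; asserting that ``one identifies the relevant relative class with a multiple of $[\Tcal[m]-m^N\Tcal_0]$'' assumes the conclusion. A second, smaller gap: the claim that $\eta$ ``restricts to $0$ along any torsion section'' is unjustified. Since $\eta$ is the fiberwise limit of the Thom form, it represents the Euler class of the underlying rank-$N$ bundle pulled back from $S_\Gamma$, so $v_0^\ast\eta$ is that Euler form on the base; its vanishing (needed even for the closedness of $E_{\Psi,v_0}$, which the paper asserts) is a genuine input about this specific bundle, not a formal consequence of $v_0$ being a torsion section.
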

For $v,w \in \Q^N$ let us define the test functions $\chi_1(v) \coloneqq \chi(v,0)$ and $\chi_2(w) \coloneqq \chi(0,w)$. Let
\begin{align}
z(\chi_i)\coloneqq \sum_{v_0 \in D_L}\chi_i(v_0)z(v_0).
\end{align}
Let $\widehat{\chi}_2$ be the finite Fourier transform of $\chi_2$ and set
\begin{align}
z(\chi)\coloneqq -z(\chi_1)-z(\widehat{\chi}_2).
\end{align}

\begin{prop} \label{prop eulerclass} We have
\begin{align}
a_0(E_{\varphi, \chi}(z_1,\tau))=z(\chi) \ \textrm{in} \ H^{N-1}(S_\Gamma,\Q).
\end{align}
\end{prop}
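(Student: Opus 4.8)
The plan is to match the constant term of $E_{\varphi\chi}$ computed in \eqref{eq constant term} with the Bergeron--Charollois--Garcia forms $E_{\Psi,v_0}$, using the two models of the Weil representation. Recall from \eqref{eq constant term} that
\[
a_0(E_{\varphi\chi}(z_1,\tau)) = \sum_{\vbf \in \Gamma \backslash \Lcal^\vee_{\sing,1}} \chi(\vbf) E_{\vbf}(z_1,y) + \sum_{\vbf \in \Gamma \backslash \Lcal^\vee_{\sing,2}} \chi(\vbf) E_{\vbf}(z_1,y) + \sum_{\substack{\vbf \in \Gamma \backslash \Lcal^\vee_{\reg} \\ B(\vbf)=0}} \chi(\vbf) E_{\vbf}(z_1,y),
\]
and by Proposition \ref{exactform} the last (regular isotropic) sum vanishes in cohomology. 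So the whole class is carried by the two singular sums, which I will treat separately.

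First I would analyze the sum over $\Lcal^\vee_{\sing,1}$, where $\vbf=(v,0)$ with $v\neq 0$. Here $B(\vbf)=0$, and since $\psi^0(z,\sqrt{y}(v,0))$ has exponential factor $\exp(-\pi\lVert \rho_g^{-1}(v,0)\rVert^2)=\exp(-\pi u^{-2}\lVert g_1^{-1}v\rVert^2)$, the associated term $E_{(v,0)}(z_1,y)$ is given by the convergent integral $\sum_{v'\in \Gamma v}\int_0^\infty \psi^0(z,\sqrt{y}(v',0))\frac{du}{u}$. The key observation is that under the isomorphism \eqref{iso bundle} $X\times\R^N\simeq E$, the section $s_{(v,0)}(z)=[g,g^{-1}v/\sqrt 2]$ is (up to the $\sqrt 2$ rescaling, which is harmless after the $u$-integral and can be absorbed) the composition of the zero section of $X$ with the translation map, so that $\varphi^0(z,(v,0))$ is essentially the pullback of $\Phi$ along $v\colon X\to E$. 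Summing over $v'\in\Gamma v$ and over the representatives $v_0\in D_L$ with weight $\chi_1$, and pushing forward along the $\R_{>0}$-fiber, this sum becomes $\sum_{v_0}\chi_1(v_0)v_0^\ast E_\Psi(z_1,v',s)|_{s=0}=z(\chi_1)$ up to the sign $-1$ coming from Theorem \cite[Theorem.~23]{bcg} (i.e. the relation $-z(v_0)=[E_{\Psi,v_0}]$). The main technical point is to check that the Schwartz data line up: the function $\phi^0_{\sigma_l}$ and $\lambda(\sigma_l)$ appearing in $\psi^0$ must be matched against the component expansion of $\Psi$; since both arise from applying the Berezinian to $\exp(-2\sqrt\pi\nabla s+R)$ with the \emph{same} connection $\theta$ on $E$, they agree after the identification \eqref{iso bundle}, and the rescaling by $\sqrt t$ in the fiber is exactly the variable appearing in the BCG integral $E_\Psi(z_1,v',s)$.

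Next I would handle the sum over $\Lcal^\vee_{\sing,2}$, where $\vbf=(0,w)$ with $w\neq 0$. By Proposition \ref{prop uniformconv}, on this piece one must pass to the other model: $\sum_{\vbf\in\Lcal^\vee_{\sing,2}}\omega(g,h)\phi\chi(\vbf) = \sum_{\vbf\in\Lcal^\vee_{\sing,2}}\omega'(g,h)\four_2(\phi\chi)(\vbf)$ via Poisson summation in $w$. Using the explicit partial Fourier transforms of $\phi_\sigma$ and $\phi_{\sigma_l}$ from Proposition \ref{phisigma}, the vector $(0,w)$ after $\four_2$ becomes a term in the variable $w$ paired through $B$, and an argument entirely symmetric to the first case — but now with the bundle $E$ twisted by $g\mapsto g^{-t}$, i.e. with the roles of $v$ and $w$ swapped — identifies this sum with $z(\widehat\chi_2)$, again up to the sign $-1$. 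The use of $\four_2(\chi)=\widehat\chi_2$ in the second variable is precisely why the constant term involves $z(\widehat\chi_2)$ rather than $z(\chi_2)$. Collecting both pieces gives $a_0(E_{\varphi\chi})= -z(\chi_1)-z(\widehat\chi_2) = z(\chi)$ in $H^{N-1}(S_\Gamma,\Q)$, as claimed.

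The main obstacle I anticipate is the bookkeeping of the two models and the identification of differential forms: showing rigorously that $p_\ast\varphi^0(z,(v,0))$ (resp. its $w$-analogue after $\four_2$) equals the BCG form $E_{\Psi,v_0}$ (resp. $E_{\Psi,w_0}$) at $s=0$ requires carefully tracking (i) the factor $\sqrt 2$ in the definition of $s_\vbf$ versus the factor in \eqref{iso bundle}, (ii) the change of variable relating the fiber-rescaling parameter $t$ in $\varphi^0(z,\sqrt t\,\vbf)$ to the regularization variable $s$ in $E_\Psi(z_1,v',s)$, and (iii) the signs in Theorem \cite[Theorem.~23]{bcg} and in the pushforward conventions. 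Since both the $\varphi$-form and the BCG $\Phi$-form are produced by the \emph{same} Mathai--Quillen recipe applied to the \emph{same} metric bundle $E$ (this is exactly the content of \ref{sec:varphiform} together with \cite[Proposition.~16]{bcg}), the identity of the forms is conceptually forced; the work is purely in normalizing constants. I would organize the proof so that the conceptual identification ($s_\vbf$ restricted to the singular locus factors through the translation section of the torus bundle) is stated first, and the constant-chasing is relegated to a short verification using the explicit formulas of Propositions \ref{explicitpsi}, \ref{explicitalphatil}, and \ref{phisigma}.
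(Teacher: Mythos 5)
Your proposal is correct and follows essentially the same route as the paper: split the constant term as in \eqref{eq constant term}, discard the regular isotropic part by Proposition \ref{exactform}, identify the first singular sum with $\sum_{v_0}\chi_1(v_0)E_{\Psi,v_0}$ via the commutative diagram relating $s_{(v,0)}$ to the translation section of the torus bundle under \eqref{iso bundle}, invoke \cite[Theorem.~23]{bcg} for the sign, and treat the second singular sum by Poisson summation with the roles of $v$ and $w$ swapped. Your explicit flagging of the $\sqrt{2}$ normalization in $s_\vbf$ and of the homogeneity in $y$ (which the paper handles via $E_{(v,0)}(z_1,y,s)=y^sE_{(v,0)}(z_1,1,s)$) is, if anything, slightly more careful than the paper's own write-up.
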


\begin{proof} We have seen in \eqref{eq constant term} that the constant term is
\begin{align} 
a_0(E_{\varphi, \chi}(z_1,\tau)) & =\sum_{\vbf \in \Gamma \backslash \Lcal^\vee_{\sing,1} } \chi(\vbf) E_{[\vbf]}(z_1,y) + \sum_{\vbf \in \Gamma \backslash \Lcal^\vee_{\sing,2}} \chi(\vbf) E_{[\vbf]}(z_1,y) \nonumber \\
& + \sum_{\substack{\vbf \in \Gamma \backslash \Lcal^\vee_{\reg} \\ Q(\vbf)=0}} \chi(\vbf) E_{[\vbf]}(z_1,y),
\end{align}
where, by Proposition \ref{exactform}, the regular term is exact.

Note that the following diagram commutes
\begin{equation}
\begin{tikzcd}[column sep=large,row sep=large]
X\times \R^N \arrow[r] & G \times_K \R^N  \\
X \arrow[u, "v"] \arrow[ru, swap, "s_{(v,0)}"] &
\end{tikzcd}
\end{equation}
where the top arrow is the map \eqref{iso bundle} and $s_{\vbf}(z)=\left [g_z, (g_z^{-1}v-g_z^tw)/\sqrt{2} \right ]$ is the section used to construct $\varphi^0(z,\vbf)=s_\vbf^\ast U$. Hence, we have $v^\ast \Phi(z,v')=\varphi^0(z,(v,0))$. It follows that 
\begin{align}
v_0^\ast \Theta_\Phi(z,v') & = \sum_{v \in L} v_0^\ast t_{v}^\ast \Phi(z,v')  \nonumber \\ 
& = \sum_{v \in L} (v_0+v)^\ast \Phi(z,v') \nonumber \\
& = \sum_{v \in v_0+L} \varphi^0(z,(v,0)) \nonumber \\
& = \sum_{v \in \Gamma \backslash (v_0+L)} \sum_{v' \in \Gamma v} \varphi^0(z,(v',0)).
\end{align}
Hence, taking $\frac{du}{u}$ components gives
\begin{align}
v_0^\ast \Theta_\Psi(z,v') & = \sum_{v \in \Gamma \backslash (v_0+L)} \sum_{v' \in \Gamma v} \psi^0(z,(v',0))
\end{align}
and the integral over $\R_{>0}$ is
\begin{align}
E_{\Psi,v_0}(z_1,s)& = v_0^\ast \int_0^\8 \Theta_\Psi(z,v') u^{-s}\frac{du}{u} \nonumber \\
&= \int_0^\8 \sum_{v \in \Gamma \backslash (v_0+L)} \sum_{v' \in \Gamma v} \psi^0(z,(v',0)) u^{-s} \frac{du}{u} \nonumber \\
& = \sum_{v \in \Gamma \backslash (v_0+L)} E_{[(v,0)]}(z_1,1,s).
\end{align}
Finally, note that the function $E_{[(v,0)]}(z_1,y)$ satisfies the homogeneity property 
\begin{align}
E_{[(v,0)]}(z_1,y,s)=y^sE_{[(v,0)]}(z_1,1,s),
\end{align}which can be seen from the change of variable $t=uy$, and shows that at $s=0$
\begin{align}
\sum_{[v] \in \Gamma \backslash (v_0+L) } E_{[(v,0)]}(z_1,1)=E_{\Psi,v_0}(z_1).
\end{align}
It follows that
\begin{align}
\sum_{\vbf \in \Gamma \backslash \Lcal^\vee_{\sing,1} } \chi(\vbf) E_{[\vbf]}(z_1,y) =\sum_{v_0 \in D_L} \chi_1(v_0) E_{\Psi,v_0}
\end{align}
represents $-z(\chi_1)$, and the second singular term follows from Poisson summation.
\end{proof}

\subsection{Holomorphic theta lift} \label{sec:thetalift}
 The form $E_{\varphi, \chi}(z_1,\tau)$ transforms like a modular of weight $N$. To obtain a lift to {\em holomorphic} modular forms, we have to show that the forms vanishes (in cohomology) under the operator $\frac{\partial}{\partial \overline{\tau}}=\frac{1}{2} \left (\frac{\partial}{\partial x}+i\frac{\partial}{\partial y} \right )$.

Recall that we defined in \eqref{thetaalpha} the theta series
\begin{align}
\Theta_{\alpha, \chi}(z,\tau) = y\sum_{\vbf \in V} \chi(\vbf)\alpha^0(z,\sqrt{y}\vbf)q^{Q(\vbf)}.
\end{align}

Since $\Theta_{\alpha, \chi}$ is rapidly decreasing at $0$ and $\8$, the integral
 \begin{align}
 E_{\alpha, \chi}(z_1,\tau) \coloneqq p_\ast \left ( \Theta_{\alpha, \chi}(z,\tau) \right )\in \Omega^{N-2}(S_\Gamma) \otimes C^\8(\HH)
 \end{align}
along the fiber of $X \longrightarrow S$ converges.

\begin{thm} \label{holo in coho}
The form $ E_{\alpha, \chi}(z_1,\tau)$ transforms like a modular form of weight $N-2$ and 
\begin{align}
2iy^2\frac{\partial}{\partial \overline{\tau}}E_{\varphi, \chi}(z,\tau)=dE_{\alpha, \chi}(z_1,\tau).
\end{align} In particular, the lift $E_{\varphi, \chi}(C,\tau)$ is holomorphic.
\end{thm}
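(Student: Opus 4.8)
The plan is to prove the differential identity first for the theta \emph{kernels} on $X$ and then push it forward along $p\colon X\to S$; the weight $N-2$ modularity of $E_{\alpha\chi}$ and the holomorphicity of the lift then follow easily.

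First, the modularity. The theta series $\Theta_{\alpha\chi}(z,\tau)$ already transforms like a modular form of weight $N-2$ for $\Gamma'$: this is recorded at \eqref{thetaalpha} and follows from Lemma~\ref{soinv} together with the intertwining properties of the Weil representation, exactly as for $\Theta_{\varphi\chi}$. Since the fiber integration $p_\ast$ acts only on the $X$-variable, it commutes with the $\SL_2(\R)$-action on $\tau$, so $E_{\alpha\chi}(z_1,\gamma\tau)=j(\gamma,\tau)^{N-2}E_{\alpha\chi}(z_1,\tau)$ for $\gamma\in\Gamma'$; absolute convergence of the defining integral was already noted, using that $\Theta_{\alpha\chi}$ is rapidly decreasing as $u\to 0$ and $u\to\infty$.

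Next, the kernel identity on $X$. Write $\Theta_{\varphi\chi}(z,\tau)=\sum_{\vbf}\chi(\vbf)\varphi^0(z,\sqrt y\vbf)q^{B(\vbf)}$ and $\Theta_{\alpha\chi}(z,\tau)=y\sum_{\vbf}\chi(\vbf)\alpha^0(z,\sqrt y\vbf)q^{B(\vbf)}$, and apply $\frac{\partial}{\partial\overline\tau}=\frac12(\partial_x+i\partial_y)$ term by term. Each $q^{B(\vbf)}$ is holomorphic in $\tau$, hence annihilated, while $\varphi^0(z,\sqrt y\vbf)$ depends on $\tau$ only through $y$, so $\frac{\partial}{\partial\overline\tau}\varphi^0(z,\sqrt y\vbf)=\frac i2\partial_y\varphi^0(z,\sqrt y\vbf)=\frac{i}{2y}\bigl(y\,\partial_y\bigr)\varphi^0(z,\sqrt y\vbf)$. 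The transgression formula \eqref{alphatilprop}, read with $t=y$, gives $\bigl(y\,\partial_y\bigr)\varphi^0(z,\sqrt y\vbf)=d\alpha^0(z,\sqrt y\vbf)$, where $d$ is the de Rham differential on $X$, which passes through $q^{B(\vbf)}$ (independent of $z$) and through the sum. Recognizing $\sum_{\vbf}\chi(\vbf)q^{B(\vbf)}\alpha^0(z,\sqrt y\vbf)=y^{-1}\Theta_{\alpha\chi}(z,\tau)$ and that $y^{-1}$ is $d$-closed on $X$, one obtains
\[
2iy^2\,\frac{\partial}{\partial\overline\tau}\Theta_{\varphi\chi}(z,\tau)=\pm\, d\Theta_{\alpha\chi}(z,\tau)\qquad\text{on } X ,
\]
the sign being fixed by careful bookkeeping of the $(-1)$'s in the normalizations of $\Util_t$, $\alpha^0$ and $p_\ast$. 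Termwise differentiation and the rearrangements are legitimate by the absolute and locally uniform convergence of the theta series and of their $y$-derivatives (Proposition~\ref{rapiddecrease}).

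To descend, apply $p_\ast$ to both sides. On the left, $p_\ast$ is $\tau$-independent integration over the fiber $\R_{>0}$, so it commutes with $\frac{\partial}{\partial\overline\tau}$ (differentiation under the integral, justified by the uniform convergence in Proposition~\ref{rapiddecrease conv}), giving $2iy^2\frac{\partial}{\partial\overline\tau}E_{\varphi\chi}(z_1,\tau)$. On the right, since $\Theta_{\alpha\chi}$ is rapidly decreasing at $0$ and $\infty$, Proposition~\ref{dpushforward} applies with vanishing boundary terms, so $p_\ast(d\Theta_{\alpha\chi})=d\,E_{\alpha\chi}(z_1,\tau)$; this yields the asserted identity. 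Finally, for a cycle $C$ representing a class in $H_{N-1}(S_\Gamma;\Z)$, Stokes' theorem gives $2iy^2\frac{\partial}{\partial\overline\tau}E_{\varphi\chi}(C,\tau)=\int_C d\,E_{\alpha\chi}(z_1,\tau)=\int_{\partial C}E_{\alpha\chi}(z_1,\tau)=0$, so $E_{\varphi\chi}(C,\tau)$ is holomorphic in $\tau$; combined with its weight $N$ transformation and moderate growth (from Theorem~\ref{theoremA}), it is a holomorphic modular form.

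The conceptual input, the transgression formula \eqref{alphatilprop}, is already in hand, so the main obstacle is purely analytic: justifying that $\frac{\partial}{\partial\overline\tau}$ and $d$ may be moved through the infinite sums defining the theta kernels and through the fiber integral $p_\ast$, and that the boundary terms of Proposition~\ref{dpushforward} vanish. This is precisely where the rapid-decrease estimates and the vanishing of the partial Fourier transforms at the origin, $\four_1(\varphi)(0)=\four_2(\varphi)(0)=\four_1(\alpha)(0)=\four_2(\alpha)(0)=0$ from Proposition~\ref{phisigma}, are essential. A secondary but genuine nuisance is pinning down the exact sign in the kernel identity, which must be tracked through the definitions of $\Util_t$, $\alpha^0$ and $p_\ast=(-1)^{N-1}\int_0^\infty\iotau(\cdot)\,\frac{du}{u}$.
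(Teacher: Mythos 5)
Your proposal is correct and follows essentially the same route as the paper: the transgression identity $d\alpha^0(z,\sqrt{t}\vbf)=t\frac{\partial}{\partial t}\varphi^0(z,\sqrt{t}\vbf)$ applied with $t=y$ gives $d\Theta_{\alpha\chi}=2iy^2\frac{\partial}{\partial\overline{\tau}}\Theta_{\varphi\chi}$ on $X$, and Proposition \ref{dpushforward} together with the rapid decrease of $\Theta_{\alpha\chi}$ at $u\to 0,\infty$ pushes this down to $S$. The only difference is cosmetic: the sign you leave as $\pm$ comes out as $+$ directly from \eqref{alphatilprop}, with no further bookkeeping needed.
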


\begin{proof} The form $\alpha^0$ satisfies
\begin{align}
d\alpha^0(z,\sqrt{y}\vbf)=y\frac{\partial}{\partial y}\varphi^0(z,\sqrt{y}\vbf)=2iy \frac{\partial}{\partial \bar{\tau}}\varphi^0(z,\sqrt{y}\vbf). 
\end{align}
Hence, we have
\begin{align}
d\Theta_{\alpha, \chi}(z,\tau)
& = y\sum_{\vbf \in \Lcal^\vee} \chi(\vbf)d\alpha^0(z,\sqrt{y}\vbf)q^{Q(\vbf)} \nonumber \\
& = 2iy^2 \frac{\partial}{\partial \bar{\tau}} \Theta_{\varphi, \chi}(z,\tau).
\end{align}
Since $\Theta_{\alpha, \chi}(z,\tau)$ is rapidly decreasing as $u$ goes to $0$ and $\8$, we deduce from Proposition \ref{dpushforward} that
\begin{align}
2iy\frac{\partial}{\partial \overline{\tau}} E_{\varphi, \chi}(z_1,\tau)=p_\ast \left (d \Theta_{\alpha, \chi}(z,\tau)\right )=d_1 p_\ast \left (\Theta_{\alpha, \chi}(z,\tau)\right ).
\end{align}
\end{proof}

For a compactly supported form $\omega \in \Omega_c^{(N^2-N)/2}(S_\Gamma)$, we define
\begin{align} \label{pairing theta lift}
E_{\varphi, \chi}(\omega,\tau) \coloneqq  \left ( \omega, E_{\varphi, \chi}(z_1,\tau) \right ) = \int_{S_\Gamma} \omega \wedge E_{\varphi, \chi}(z_1,\tau) .
\end{align}
By Poincaré duality, we have $H_c^{(N^2-N)/2}(S_\Gamma;\C) \simeq H_{N-1}(S_\Gamma;\C)$. If $\omega=\omega_C$ is a form in $ \Omega_c^{(N^2-N)/2}(S_\Gamma;\C)$ representing the Poincaré dual of a cycle $C$, then we write
\begin{align}
E_{\varphi, \chi}(C,\tau) \coloneqq  (-1)^{\frac{N(N-1)^2}{2}} E_{\varphi, \chi}(\omega_C,\tau)= \int_{C} E_{\varphi, \chi}(z_1,\tau).
\end{align}
Let $\Mcal_N(\Gamma')$ be the space of holomorphic modular forms of weight $N$ and level $\Gamma' \subset \SL_2(\Z)$. We combine sections  \ref{subsec:negative fourier} and  \ref{subsec:positive fourier} to deduce the following theorem.
\begin{thm} The form $E_{\varphi, \chi}(z_1,\tau)$ defines a lift
\begin{align}
E_{\varphi, \chi} \colon H_{N-1}(S_\Gamma;\Z) \longrightarrow \Mcal_N(\Gamma'), \qquad
Z \longmapsto E_{\varphi, \chi}(Z),
\end{align}
where for a cycle $Z \in Z_{N-1}(S_\Gamma, \Z)$ we have 
\begin{align}
E_{\varphi, \chi}(Z,\tau)=\int_Z z(\chi)+ (-1)^{\frac{N(N-1)^2}{2}}\sum_{n=1}^\8\left ( Z,Z_n(\chi) \right ) q^n. 
\end{align}
\end{thm}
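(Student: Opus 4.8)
The plan is to assemble the pieces established above. For fixed $\tau\in\HH$, the form $E_{\varphi\chi}(z_1,\tau)$ is closed and $\Gamma$-invariant, so it represents a class in $H^{N-1}(S_\Gamma;\C)$; Stokes' theorem then shows that $E_{\varphi\chi}(Z,\tau)\coloneqq\int_Z E_{\varphi\chi}(z_1,\tau)$ depends only on the class of $Z$ in $H_{N-1}(S_\Gamma;\Z)$, which gives a well-defined map $H_{N-1}(S_\Gamma;\Z)\longrightarrow C^\8(\HH)$. Throughout, the absolute and locally uniform convergence of $p_\ast(\Theta_{\varphi\chi})$ furnished by Proposition~\ref{rapiddecrease conv} (together with the vanishing $\four_1(\varphi)(0)=\four_2(\varphi)(0)=0$ from Proposition~\ref{phisigma}) is what licenses differentiation under the integral sign.

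I would first record modularity and holomorphy in $\tau$. Since $\Theta_{\varphi\chi}(z,\gamma\tau)=j(\gamma,\tau)^N\Theta_{\varphi\chi}(z,\tau)$ for $\gamma\in\Gamma'$ and the fiber integration $p_\ast$ touches only the $z$-variable, $p_\ast$ commutes with the $\SL_2(\R)$-action on $\tau$; hence $E_{\varphi\chi}(z_1,\gamma\tau)=j(\gamma,\tau)^N E_{\varphi\chi}(z_1,\tau)$, and integrating over $Z$ gives the same transformation law for $E_{\varphi\chi}(Z,\tau)$. For holomorphy, the theorem on the antiholomorphic derivative proved above shows that $\partial/\partial\overline{\tau}\,E_{\varphi\chi}(z_1,\tau)$ is a constant multiple of $d_1 E_{\alpha\chi}(z_1,\tau)$; integrating over the cycle $Z$ and applying Stokes' theorem annihilates the right-hand side, so $\partial/\partial\overline{\tau}\,E_{\varphi\chi}(Z,\tau)=0$ and $E_{\varphi\chi}(Z,\tau)$ is holomorphic on $\HH$.

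Next I would compute the Fourier expansion using Section~\ref{sec:fourierexpansion}. Its constant term equals $z(\chi)$ in $H^{N-1}(S_\Gamma;\Q)$ by Proposition~\ref{prop eulerclass}, so the constant term of $E_{\varphi\chi}(Z,\tau)$ is $\int_Z z(\chi)$; the regular isotropic part of the constant term is exact on $S_\Gamma$ by Proposition~\ref{exactform} (since $Z_\vbf=\emptyset$ when $\vbf$ is regular with $B(\vbf)=0$) and so contributes nothing to the period over $Z$. For $n>0$, Proposition~\ref{fouriercoeffn} identifies the $n$-th Fourier coefficient of $E_{\varphi\chi}(z_1,\tau)$ with a representative of the Poincaré dual of $Z_n(\chi)$, and unwinding the pairings recorded in Section~\ref{sec:fourierexpansion} turns its period over $Z$ into $(-1)^{\frac{N(N-1)^2}{2}}[Z,Z_n(\chi)]$. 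For $n<0$ every contributing vector $\vbf$ is regular with $B(\vbf)<0$, so $Z_\vbf=\emptyset$, the corresponding form is exact on all of $S_\Gamma$ by Proposition~\ref{exactform}, and its period over $Z$ vanishes. Assembling these three computations gives the stated $q$-expansion; in particular it has no negative powers of $q$, and since the analogous computation at the other cusps (obtained by the corresponding slash operation) has the same shape, $E_{\varphi\chi}(Z,\tau)$ is holomorphic at the cusps and hence lies in $\Mcal_N(\Gamma')$.

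The one genuinely delicate ingredient is not any single step but the repeated interchange of operations — $\partial/\partial\overline{\tau}$, the period integral $\int_Z$, the fiber integral over $\R_{>0}$, and the sum over $\Q^{2N}$ — needed to manipulate $E_{\varphi\chi}$ and $E_{\alpha\chi}$; this is exactly what the rapid-decay and uniform-convergence estimates of Section~\ref{weil theta} (Propositions~\ref{rapiddecrease} and~\ref{rapiddecrease conv}) are built to provide, and once they are invoked the argument is pure bookkeeping.
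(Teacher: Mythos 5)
Your proposal is correct and follows essentially the same route as the paper, which itself states this theorem as a direct assembly of the preceding results: closedness and the transformation law of $E_{\varphi\chi}(z_1,\tau)$, holomorphy via the transgression identity $2iy^2\frac{\partial}{\partial\overline{\tau}}E_{\varphi\chi}=dE_{\alpha\chi}$ killed by Stokes on a cycle, the constant term from Proposition \ref{prop eulerclass}, the positive coefficients from Proposition \ref{fouriercoeffn}, and the vanishing of the negative and regular isotropic contributions from Proposition \ref{exactform}. Your bookkeeping of the sign $(-1)^{\frac{N(N-1)^2}{2}}$ through the pairing conventions also matches the paper's.
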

 

\subsection{Span of special cycles} We also get a dual lift
\begin{align}
 \Lambda_{\varphi, \chi} \colon \Scal_N(\Gamma') \longrightarrow   H^{N-1}(S_\Gamma;\C), \qquad
 f \longmapsto \Lambda_{\varphi, \chi}(f)
\end{align}
where for a weight $N$ cusp form $f \in \Scal_N(\Gamma')$ we define
\begin{align}
\Lambda_{\varphi, \chi}(z_1,f) & \coloneqq  \langle E_{\varphi, \chi}(z_1,\tau),f\rangle_{\pet} \nonumber \\
& = \int_{\Gamma'\backslash \HH} E_{\varphi, \chi}(z_1,\tau) \overline{f(\tau)}y^{N-2} dxdy, \qquad z_1\in S_\Gamma.
\end{align}
Since the integrals are absolutely convergent, it follows from Fubini's theorem that
\begin{align}
\left ( \omega, \Lambda_{\varphi, \chi}(f) \right )= \langle E_{\varphi, \chi}(\omega),f\rangle_{\pet}
\end{align}

Recall that we have a pairing
\begin{align}
(-,- ) \colon H_{N-1}(S_\Gamma;\C) \times H^{\BM}_{(N^2-N)/2}(S_\Gamma;\C) \longrightarrow \C.
\end{align}
For a subspace $W \subset H^{\BM}_{(N^2-N)/2}(S_\Gamma;\C)$ we denote its orthogonal complement by
\begin{align}
W^\perp \coloneqq \left \{ Z \in H_{N-1}(S_\Gamma;\C)  \ \vert  \ ( Z,Z')=0 \ \textrm{for all} \ Z' \in W \right \}.
\end{align}
The following theorem is analogous to  \cite[Theorem.~4.5]{kmcjm}. Note that here we extend the coefficients to $\C$ to get a lift
\begin{align}
E_{\varphi, \chi} \colon H_{N-1}(S_\Gamma;\C) \longrightarrow \Mcal_N(\Gamma').
\end{align}
\begin{thm} \label{theoremC416} We have
\begin{align}
\ker(E_{\varphi, \chi}) \simeq \Span \{Z_n(\chi) \ \vert \ n \in \NN \}^\perp \subset H_{N-1}(S_\Gamma;\C).
\end{align}
Moreover,  under Poincaré duality $H^{N-1}(S_\Gamma;\C) \simeq H^{\BM}_{(N^2-N)/2}(S_\Gamma;\C)$ we also have
\begin{align}
\image(\Lambda_{\varphi, \chi}) \subset \Span \{Z_n(\chi) \ \vert \ n \in \NN \}.
\end{align}
\end{thm}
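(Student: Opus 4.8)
The plan is to deduce both statements from the Fourier expansion
\[
E_{\varphi\chi}(Z,\tau)=\int_Z z(\chi)+(-1)^{\frac{N(N-1)^2}{2}}\sum_{n=1}^\8 [Z,Z_n(\chi)]q^n
\]
together with the nondegeneracy of the Poincaré pairing \eqref{perfect pairing intro}. For the kernel: $Z$ lies in $\ker(E_{\varphi\chi})$ if and only if every Fourier coefficient of $E_{\varphi\chi}(Z,\tau)$ vanishes, i.e.\ $[Z,Z_n(\chi)]=0$ for all $n\ge 1$ \emph{and} the constant term $\int_Z z(\chi)$ vanishes. The first obstacle is therefore to show that the vanishing of the constant term is automatic once all $[Z,Z_n(\chi)]=0$. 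For this I would argue that the constant term $z(\chi)\in H^{N-1}(S_\Gamma;\C)$ lies in the closure (under Poincaré duality) of the span of the $Z_n(\chi)$: indeed, $z(\chi)$ is itself a component of the modular form $E_{\varphi\chi}(z_1,\tau)\in H^{N-1}(S_\Gamma;\C)\otimes\Mcal_N(\Gamma')$, and since $\Mcal_N(\Gamma')$ is finite-dimensional, there is a linear functional on $\Mcal_N(\Gamma')$ picking out the constant term that can be written as a finite combination of the functionals "$n$-th coefficient" for $n\ge 1$ — unless $\Mcal_N(\Gamma')$ is spanned by a single form proportional to the Eisenstein series, which is handled directly. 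Concretely, if $a_0$ is a linear combination $\sum_{n\ge1}c_na_n$ of coefficient functionals on $\Mcal_N(\Gamma')$, then $\int_Z z(\chi)=\sum c_n(-1)^{\frac{N(N-1)^2}{2}}[Z,Z_n(\chi)]=0$ whenever all $[Z,Z_n(\chi)]=0$. This gives
\[
\ker(E_{\varphi\chi})=\{Z\mid [Z,Z_n(\chi)]=0 \text{ for all } n\}=\Span\{Z_n(\chi)\mid n\in\NN\}^\perp,
\]
the last equality being the definition of orthogonal complement together with the nondegeneracy of the pairing.

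For the image of $\Lambda_{\varphi\chi}$: let $f\in\Scal_N(\Gamma')$ and suppose $Z\in H_{N-1}(S_\Gamma;\C)$ satisfies $[Z,Z_n(\chi)]=0$ for all $n\ge 1$; by the kernel statement just proved (and the fact that the constant-term vanishing is automatic), such $Z$ lies in $\ker(E_{\varphi\chi})$, so $E_{\varphi\chi}(Z,\tau)=0$ identically in $\tau$. Then
\[
[Z,\Lambda_{\varphi\chi}(f)]=\langle E_{\varphi\chi}(Z,\tau),f\rangle_{\pet}=0,
\]
using the identity $[\omega,\Lambda_{\varphi\chi}(f)]=\langle E_{\varphi\chi}(\omega),f\rangle_{\pet}$ recorded before the theorem and Poincaré duality $\omega_Z\leftrightarrow Z$. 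Hence $\Lambda_{\varphi\chi}(f)$ pairs to zero with every element of $\Span\{Z_n(\chi)\}^\perp$. Since the pairing \eqref{perfect pairing intro} is perfect, $\Span\{Z_n(\chi)\}^\perp$ is a subspace whose annihilator (under this pairing) is exactly $\Span\{Z_n(\chi)\mid n\in\NN\}$, viewed inside $H^{\BM}_{(N^2-N)/2}(S_\Gamma;\C)\simeq H^{N-1}(S_\Gamma;\C)$. Therefore $\Lambda_{\varphi\chi}(f)\in\Span\{Z_n(\chi)\mid n\in\NN\}$, which is the second claim.

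I expect the main obstacle to be the constant-term argument — verifying that the linear functional "extract $a_0$" on the finite-dimensional space $\Mcal_N(\Gamma')$ is a combination of the functionals $a_n$, $n\ge1$, with attention to the degenerate cases in small weight or level where $\Mcal_N(\Gamma')$ might be one-dimensional and spanned by an Eisenstein series whose constant term is nonzero. In that exceptional situation the argument is different: one checks directly that $z(\chi)$ is proportional to the Poincaré dual of $Z_1(\chi)$ (or some explicit $Z_n(\chi)$) using the spectral/geometric description of $z(\chi)$ as the transgressed Euler class, so the containment still holds. A cleaner uniform route, which I would present as the main line, is to note that the map $Z\mapsto E_{\varphi\chi}(Z,\cdot)\in\Mcal_N(\Gamma')$ factors through the finite-dimensional quotient $H_{N-1}(S_\Gamma;\C)/\Span\{Z_n(\chi)\}^\perp$, and that on this quotient the pairing $(Z,n)\mapsto[Z,Z_n(\chi)]$ is nondegenerate in the first variable; the constant term, being a further linear functional on the image, must then also be expressible through these pairings by finite-dimensionality of $\Mcal_N(\Gamma')$, giving the displayed identification of $\ker(E_{\varphi\chi})$ without case distinctions.
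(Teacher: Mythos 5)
Your proof is correct and follows essentially the same route as the paper: read off $\ker(E_{\varphi\chi})$ from the Fourier expansion and the nondegeneracy of the intersection pairing, then get $\image(\Lambda_{\varphi\chi})\subset\ker(E_{\varphi\chi})^{\perp}=\Span\{Z_n(\chi)\}$ from the adjointness identity $[\omega,\Lambda_{\varphi\chi}(f)]=\langle E_{\varphi\chi}(\omega),f\rangle_{\pet}$. The one place you diverge is the constant term, which you flag as the main obstacle and treat with a case analysis; this dissolves immediately, since a holomorphic modular form of weight $N\geq 1$ all of whose positive Fourier coefficients vanish is a constant and hence zero, so $a_0$ automatically lies in the span of the functionals $a_n$, $n\geq 1$, on $\Mcal_N(\Gamma')$ and your ``exceptional situation'' cannot occur. (The paper in fact glosses over this point entirely, so your instinct to address it is sound, just the resolution is simpler than you make it.)
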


\begin{proof} The first follows from the definition of the theta lift.  The lift $E_{\varphi, \chi}(Z)$ vanishes if and only if all the Fourier coefficients $[ Z,Z_n(\chi) ]$ vanishes, {\em i.e.} when 
\begin{align}
Z \in \{Z_n(\chi) \ \vert \ n \in \NN \}^\perp.
\end{align}

On the other hand, if $\omega$ is an element of $\ker(E_{\varphi, \chi})$, then for $\eta=\Lambda_{\varphi, \chi}(f)$ in the image of $\Lambda_{\varphi, \chi}$ we have
\begin{align}
( \omega, \eta )= ( \omega, \Lambda_{\varphi, \chi}(f) ) = \langle E_{\varphi, \chi}(\omega), f \rangle_{\pet}=0.
\end{align}
This shows that $\ker(E_{\varphi, \chi}) \subset \image(\Lambda_{\varphi, \chi})^\perp$, or equivalently $\image(\Lambda_{\varphi, \chi})\subset \ker(E_{\varphi, \chi})^\perp $ 
\end{proof}

\begin{rmk}
Equivalently, we have $\ker(\Lambda_{\varphi, \chi})=\image(E_{\varphi, \chi}) \cap \Scal_N(\Gamma').$
\end{rmk}

\subsection{Periods over tori} \label{sec:periodtori} Let $T(\Q) \subset \GL_N(\Q)$ be a torus, and $T^{(1)}(\Q)  \coloneqq T(\Q) \cap \SL_N(\Q) \subset \SL_N(\Q)$. We first consider the split torus
\begin{align}
T_0(\Q) &= \left \{ \begin{pmatrix}
T_1 & & \\
 & \ddots & \\
& & T_N
\end{pmatrix}, \, T_m  \in \Q^\times \right \} \simeq (\Q^\times)^{N} \\
T^{(1)}_0(\Q) & = \left \{ \begin{pmatrix}
t_1 & & & \\
 & \ddots & & \\
& & t_{N-1}& \\
& & & (t_1\cdots t_{N-1})^{-1}
\end{pmatrix} \, T_m  \in \Q^\times \right \} \simeq (\Q^\times)^{N-1}.
\end{align}
The change of variables $(T_1, \dots, T_N)=(\frac{t_1}{u}, \dots, \frac{t_{N-1}}{u},\frac{1}{ut_1\cdots t_N})$ is a diffeomorphism $T_0(\R) \simeq T_0^{(1)}(\R) \times \R_{>0}$.
The intersection $T_0(\R) \cap \SO(N)$ is the maximal compact $\{ \pm 1\}^{N-1} \subset T_0(\R)$. Hence the quotient 
\begin{align}
T^{(1)}_0(\R)/T^{(1)}_0(\R) \cap \SO(N) \simeq \R_{>0}^{N-1}
\end{align}
can be embedded in $S$. Let $\vartheta_{T_0} \in \Omega^1(T_0(\R)) \otimes \End(\R^N)$ be the pullback of the Maurer-Cartan $\vartheta$ to $T_0$. Then
\begin{align}
\vartheta_{T_0} = \begin{pmatrix}
dT_1/T_1 & & & \\
 & \ddots & & \\
& & dT_{N-1}/T_{N-1}& \\
& & & dT_N/T_N,
\end{pmatrix}
\end{align}
and $\lambda_{T_0}=\frac{1}{2}(\vartheta_{T_0} +\vartheta_{T_0} ^t)=\vartheta_{T_0} $. Since $(\lambda_{T_0})_{ij}=0$ if $i \neq j$, it follows that
\begin{align} \label{equation460}
\lambda_{T_0}(\sigma)  & = (\lambda_{T_0})_{1\sigma(1)} \wedge \cdots \wedge (\lambda_{T_0})_{N\sigma(N)}
\end{align}
can only be nonzero if $\sigma$ is the identity function $\sigma(i)=i$. For this function, we see that \eqref{equation460} is equal to
\begin{align}
\lambda_{T_0}(\sigma)=\frac{dT_1}{T_1} \wedge \cdots \wedge \frac{dT_N}{T_N}.
\end{align}
The contraction along $u \frac{\partial}{\partial u}$ is 
\begin{align}
\iota_{u \frac{\partial}{\partial u}} \left (\frac{dT_1}{T_1} \wedge \cdots \wedge \frac{dT_N}{T_N} \right )=N \frac{dt_1}{t_1} \wedge \cdots \wedge \frac{dt_{N-1}}{t_{N-1}}.
\end{align}
Hence, the restriction of $E_{\varphi, \chi}(z_1,\tau,s)$ to $\R_{>0}^{N-1}$ is
\begin{align}
E_{\varphi, \chi}(t,\tau,s)= \Lambda_N(s) y^{\frac{s}{2}} \sum_{\substack{\vbf \in  V \\ \vbf \neq 0}} \four_2(\chi)(\vbf)  \frac{\N(\overline{v\tau+w})}{ \lVert t^{-1} (v\tau+w) \rVert^{2N+s}} N\frac{dt_1}{t_1} \wedge \cdots \wedge \frac{dt_{N-1}}{t_{N-1}}
\end{align}
where $t=(t_1,\dots,t_{N-1})$ and
\begin{align}
\lVert t^{-1} (v\tau+w) \rVert^{2N+s}= \left ( \sum_{m=1}^{N-1}\frac{\vert v_m\tau+w_m\vert^2}{t_m^2}+ (t_1\cdots t_{N-1})^2\vert v_N\tau+w_N\vert^2\right )^{N+s/2}.
\end{align}
Note that for a vector $\mu \in \C^N$ we denote its norm by $\N(\mu)=\mu_1 \cdots \mu_N$. 

To compute the integral over $\R_{>0}^{N-1}$ it will be more convenient to change the variables and compute
\begin{align}
\int_{T^{(1)}_0(\R)/T^{(1)}_0(\R) \cap \SO(N)}E_{\varphi, \chi}(z_1,\tau,s) & = \int_{T^{(1)}_0(\R)/T^{(1)}_0(\R) \cap \SO(N)} \int_0^\8 \Theta_{\varphi, \chi}(z,\tau)u^{-s} \nonumber \\
& =   \int_{T_0(\R)/T_0(\R) \cap \SO(N)} \Theta_{\varphi, \chi}(z,\tau)u^{-s}.
\end{align}
Using that 
\begin{align}
\frac{1}{2}\Gamma \left ( N+\frac{s}{2}\right ) \alpha^{-N+\frac{s}{2}}= \int_0^\8 e^{-\alpha u^2} u^{2N+s}\frac{du}{u}
\end{align}
(for $\alpha=\lVert t^{-1} (v\tau+w) \rVert^{2N+s}$) and the change of variable $(T_1, \dots, T_N)=(\frac{u}{t_1}, \dots, \frac{u}{t_N},ut_1\cdots t_N)$ we find that 
\begin{align}
& \frac{1}{2}\Gamma \left ( N+\frac{s}{2}\right )\int_{\R_{>0}^{N-1}}\frac{1}{ \lVert t^{-1} (v\tau+w) \rVert^{2N+s}} N\frac{dt_1}{t_1} \wedge \cdots \wedge \frac{dt}{t_{N-1}} \nonumber \\
& \hspace{6cm} =\prod_{m=1}^N \int_0^\8 e^{-T_m^2 \vert v_m\tau+w_m\vert^2  } T_m^{2+s/N}\frac{dT_m}{T_m} \nonumber \\
& \hspace{6cm} = \frac{1}{2^N} \Gamma\left ( 1 + \frac{s}{N} \right )^N \prod_{m=1}^N \frac{1}{\vert v_m\tau+w_m\vert^{2+s/N}}.
\end{align}
We deduce that
\begin{align}
\int_{\R_{>0}^{N-1}} E_{\varphi, \chi}(z_1,\tau,s)= \Lambda'_N(s) \sum_{\vbf \in V} \four_2(\chi)(\vbf) \frac{y^{\frac{s}{2}}}{ \N(v\tau+w)\lvert \N(v\tau+w)\rvert^{s/N}}
\end{align}
where
\begin{align}
\Lambda'_N(s)=\frac{(-1)^{N-1}i^{N}\Gamma\left ( 1 + \frac{s}{N} \right )^N}{\pi^{N+\frac{s}{2}}}.
\end{align}

\subsubsection{Integral over modular symbols.} Let $L=n\Z^{N}$ for some positive integer $n$, and $\Lcal=L \times L$. Let $\vbf_0=(v_0,w_0)$ where $v_0=(a_1, \cdots, a_N)^t$ and $w_0=(b_1, \cdots, b_N)^t$ are two vectors in $L^\ast = n^{-1}\Z^{N}$. Ash and Rudolph \cite{ashrud} proved that the relative homology of $\overline{S}_\Gamma$ is generated by modular symbols $Z_Q$ attached to an invertible matrix $Q \in \Mat_N(\Z)$ (if $\det(Q) =0$, then the modular symbol is trivial in homology). First, let $Z_\8 \in Z^{\BM}_{N-1}(S_\Gamma;\Z)$ be the image of $T_0^{(1)}(\R)T_0^{(1)}(\R)^t \simeq \R_{>0}^{N-1}$ in $S_\Gamma$. The modular symbol $Z_Q$ is\footnote{It is exactly the set $D$ in \cite[p.~245]{ashrud}.} the image of $S_Q \coloneqq QS_\8Q^t$ in $S_\Gamma$ and defines a class
\begin{align}
Z_Q \in H^{\BM}_{N-1}(S_\Gamma,\Z).
\end{align}

Let $E_1(\tau,\lambda_0,s)$ be the weight one Eisenstein series
\begin{align}
E_1(\tau,\lambda_0,s) \coloneqq \sideset{}{^{\prime}}\sum_{\lambda \in \Z+\tau\Z }\frac{1}{(\lambda_0+\lambda)\vert \lambda_0+\lambda\rvert^{s/N}}
\end{align}
where $\lambda_0 \in \C$ and the $'$ means that we remove $-\lambda_0$ from the summation if $\lambda_0 \in \Z \tau+\Z$. The sum converges for $\re(s) \gg 0$ and admits an analytic continuation to the entire plane (it vanishes at $s=0$ if $\lambda_0 \in \Z\tau+ \Z$).

By the previous computations we find that
\begin{align} \label{prodeisenstein}
\int_{Z_\8} E_{\varphi, \chi}(z_1,\tau,s)& = \Lambda'_N(s) \sum_{\vbf \in V} \four_2(\chi)(\vbf) \frac{y^{\frac{s}{2}}}{ \N(v\tau+w)\lvert \N(v\tau+w)\rvert^{s/N}} \nonumber \\
&= \Lambda'_N(s)\sum_{\vbf_0 \in D_\Lcal} \four_2(\chi)(\vbf_0) \sum_{\substack{v \in v_0+n\Z^N \\ w \in w_0+n\Z^N}} \frac{y^{\frac{s}{2}}}{ \N(v\tau+w)\lvert \N(v\tau+w)\rvert^{s/N}} \nonumber \\
& = \Lambda'_N(s)n^{-N} \sum_{\vbf_0 \in D_\Lcal} \four_2(\chi)(\vbf_0) \prod_{m=1}^N E_1\left (\tau,\frac{a_m\tau+b_m}{n},\frac{s}{N}\right ).
\end{align}

Note that since $Q$ has integral entries, we have $Q\Lcal^\vee \subset \Lcal^\vee$ and $Q\Lcal \subset \Lcal$. Thus, the map $\vbf \mapsto \chi(Q\vbf)$ can be viewed as a test function in $\C[D_\Lcal]$, supported on $Q\Lcal^\vee \subset \Lcal^\vee$ and $\Lcal$-invariant. Using the equivariance of the form $\varphi$, its Fourier transform satisfies
\begin{align}
Q^\ast \four_2(\varphi)(\vbf)=\four_2(\varphi)(Q^{-1}\vbf),
\end{align}
and the following theorem follows.

\begin{thm} The period over the modular symbol $Z_Q$ converges and we have
\begin{align}
\int_{Z_Q} E_{\varphi, \chi}(z_1,\tau)& = \frac{(-1)^{N-1}i^N}{\pi^{N}n^{N}} \sum_{\vbf_0 \in V/\Lcal}\four_2(\chi)(Q\vbf_0) \prod_{m=1}^N E_1\left (\tau,\frac{a_m\tau+b_m}{n} \right ).
\end{align}
\end{thm}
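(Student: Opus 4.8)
The plan is to evaluate at $s=0$ the identity obtained in the computation carried out just above: for $\re(s)>N$,
\begin{align}
\int_{Z_Q} E_{\varphi\chi}(z_1,\tau,s)=\Lambda'(s)\,p^{-N}\sum_{\vbf_0\in\Q^{2N}/\Lcal}\four_2(\chi)(Q\vbf_0)\prod_{m=1}^N E_1\!\left(\tau,\tfrac{a_m\tau+b_m}{p},\tfrac sN\right),
\end{align}
where $\Lambda'(s)=(-1)^{N-1}i^N\Gamma(1+\tfrac sN)^N\pi^{-N-s/2}$ and $\vbf_0=(a_1,\dots,a_N,b_1,\dots,b_N)$. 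Two points must be checked: that the right-hand side continues holomorphically to $s=0$, and that the period $\int_{Z_Q}E_{\varphi\chi}(z_1,\tau)$ of the closed form of Theorem~\ref{theoremA} converges and equals the value of that continuation at $s=0$.

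For the right-hand side, since $Q$ is good for $\chi$ one has $\four_2(\chi)(Q\vbf_0)=(Q^\ast\chi_1)(v_0)\,\widehat{\chi}_2(Qw_0)$ with $Q^\ast\chi_1$ good (Definition~\ref{good modular symbol}); hence whenever this value is nonzero we have $a_m\notin p\Z$, and therefore $\tfrac{a_m\tau+b_m}{p}\notin\Z\tau+\Z$, for every $m$. For $\lambda_0\notin\Z\tau+\Z$ the series $E_1(\tau,\lambda_0,s)=\sum_{\lambda\in\Z\tau+\Z}(\lambda_0+\lambda)^{-1}\lvert\lambda_0+\lambda\rvert^{-s/N}$, convergent for $\re(s)>0$, extends by Hecke's trick to an entire function of $s$ with value $E_1(\tau,\lambda_0)$ at $s=0$. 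The sum over $\vbf_0$ being finite, the whole right-hand side continues to $s=0$, and since $\Gamma(1)=1$ its value there is $\frac{(-1)^{N-1}i^N}{\pi^N p^N}\sum_{\vbf_0}\four_2(\chi)(Q\vbf_0)\prod_{m=1}^N E_1(\tau,\tfrac{a_m\tau+b_m}{p})$, which is the asserted formula.

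For the left-hand side, recall that $E_{\varphi\chi}(z_1,\tau,s)=p_\ast\big(\Theta'_{\four_2(\varphi\chi)}(z,\tau)u^{-s}\big)$; by the projection formula and Fubini, $\int_{Z_Q}E_{\varphi\chi}(z_1,\tau,s)$ is the integral of $\Theta_{\varphi\chi}(z,\tau)u^{-s}$ over the $N$-dimensional split torus $T_Q(\R)/(T_Q(\R)\cap\SO(N))\cong\R_{>0}^N$ attached to $Q$, exactly as in the computation for $Z_\8$ (transported by $\vbf\mapsto Q^{-1}\vbf$, under which the support condition $v_m\notin p\Z$ is preserved). Parametrising this torus by $g=\diag(T_1,\dots,T_N)$, and using that only $\sigma=\mathrm{id}$ survives there so that the Hermite factor is $\prod_m H_1$, the integrand equals — up to the invariant form $\tfrac{dT_1}{T_1}\wedge\cdots\wedge\tfrac{dT_N}{T_N}$ and the factor $u^{-s}$ along the determinant direction, which $\Theta_{\varphi\chi}$ already makes rapidly decreasing — the theta sum
\begin{align}
\sum_{\vbf}\chi(Q\vbf)\,y^{N/2}\prod_{m=1}^N\Big(\tfrac{v_m}{T_m}+T_m w_m\Big)\,e^{-\pi y\,(v_m^2/T_m^2+T_m^2 w_m^2)}.
\end{align}
Goodness forces $v_m\neq0$ for every $m$ occurring in the support, which produces Gaussian decay as any $T_m\to0$; as $T_m\to\infty$ one has Gaussian decay when $w_m\neq0$ and, from the linear factor $\tfrac{v_m}{T_m}$ together with the parity cancellation in the sum over $v_m$ along the progression $a_m+p\Z$, decay of order $T_m^{-1}$ when $w_m=0$, which is integrable against $\tfrac{dT_m}{T_m}$. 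These bounds are locally uniform for $s$ near $0$, so $\int_{Z_Q}E_{\varphi\chi}(z_1,\tau,s)$ is holomorphic there, converges at $s=0$ to $\int_{Z_Q}E_{\varphi\chi}(z_1,\tau)$, and agrees with the displayed identity for $\re(s)>N$; by uniqueness of analytic continuation the two sides agree at $s=0$, which together with the previous paragraph proves the theorem.

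The main obstacle is this convergence analysis of the period: at the corners of the torus where $T_m\to\infty$ and $w_m=0$ the Gaussian supplies no decay, and one must argue that the degree-one Hermite factor, combined with the cancellation in the theta sum over the admissible values of $v_m$, yields genuine $T_m^{-1}$ decay, uniformly in $s$ near $0$, so that the naive period of the $s=0$ form indeed computes the special value of the analytically continued integral.
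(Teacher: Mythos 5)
Your proof follows the same route as the paper: restrict the $\re(s)>N$ Eisenstein expansion to the split torus attached to $Q$, factor the resulting sum into a product of weight-one Eisenstein series $E_1(\tau,\lambda_0,s)$, and use goodness to guarantee $\lambda_0\notin\Z\tau+\Z$ on the support so that the product is regular at $s=0$ with the stated value $\Lambda'(0)p^{-N}=\frac{(-1)^{N-1}i^N}{\pi^Np^N}$. Your extra direct verification that the $s=0$ period over the noncompact torus converges and computes the special value (Gaussian decay as $T_m\to0$ from $a_m\notin p\Z$, and cancellation from the vanishing zeroth Fourier coefficient of the odd Gaussian as $T_m\to\infty$ when $w_m=0$) is a sound elaboration of a step the paper handles only implicitly through analytic continuation.
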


\begin{rmk} For an $N$-tuple $\underline{\gamma}=(\gamma_1, \dots, \gamma_N)$, one can consider similar periods over $Z_{Q(\underline{\gamma})}$. Similar computation show that obtains the homogeneous cocycle valued in meromorphic functions $\HH \times \C^N$ considered in Zhang's thesis \cite[Section.~4.1.1]{haozhang} and by Bergeron-Charollois-Garcia in \cite[Theorem.~2.10]{bcgcrm}.
\end{rmk}

 \subsubsection{Tori attached to totally real fields} \label{real tori} In this section, we compute the period over a cycle attached to a totally real field $F$. See also \cite{rbrsln2} for a more general (and detailled) computation. 
  Let $\Ocal$ be the ring of integers of $F$ and $\sigma_1,\dots,\sigma_N$ the $N$ real embeddings. Let $F^{\times,+}$ be the set of totally positive elements in $F$, and $\Ocal^{\times,+} = \Ocal^{\times} \cap F^{\times,+}$ the totally positive units. Let $\epsilon=(\epsilon_1, \dots, \epsilon_N)$ be a $\Z$-basis of $\Ocal$, chosen such that
\begin{align}
g_{\Ocal} \coloneqq \begin{pmatrix}
\sigma_1(\epsilon_1) & \cdots & \sigma_1(\epsilon_N) \\[1em]
\vdots & & \vdots \\[1em]
\sigma_N(\epsilon_1) & \cdots & \sigma_N(\epsilon_N)
\end{pmatrix} \in \GL_N(\R)
\end{align}
has positive determinant. Let $\m \subset \Ocal$ be an integral ideal, that gives a lattice $L=\m$ in $\Z^N$ with respect to the above basis. Its dual is $L^\ast=\m^{-1}\dfrak^{-1}$, and let $\chi_\m \in \C[D_{\Lcal}]$ be the test function
\begin{align}
\chi_\m=\id_{L \times L^\ast}=\id_{\m \times \m^{-1}\dfrak^{-1}}.
\end{align}
The choice of the basis induces an embedding $R_\Ocal \colon F^\times \hooklongrightarrow \GL_N(\Q)$ via the regular representation, for which $R_\Ocal(\Ocal^{\times,+}) \subseteq \SL_N(\Z)$. For $\nu \in F^\times$ we have
 \begin{align} \label{equiv embed F}
R_\Ocal(\nu)=g^{-1}_{\Ocal}\begin{pmatrix} \sigma_1(\nu) &  & 0 \\  & \ddots &  \\ 0 &  & \sigma_N(\nu) \\ \end{pmatrix}g_{\Ocal},
\end{align}
 so that the image of $R_\Ocal$ is a maximal torus conjugate to $T_0(\R)$.
 
The totally positive elements $F^{1,+}$ of norm in $F$ can be diagonally embedded (via the embeddings)
 \begin{align}
(F\otimes \R)^{1,+} \hooklongrightarrow \SL_N(\R)
\end{align}
and the image is the split torus $T_0^{(1)}(\R)$. Since $\Gamma_\Ocal \coloneqq R_\Ocal^{-1}(R_\Ocal(\Ocal^{\times,+}) \cap \Gamma)$ has finite index in $\Ocal^{\times,+}$, it follows from Dirichlet's unit theorem that $\Gamma_\Ocal \backslash (F\otimes \R)^{1,+}$ is compact.
If $Z_\8 \subset S$ denotes the orbit of $T_0^{(1)}(\R)$ as in the previous section, then we have a diffeomorphism
\begin{align}
\Gamma_\Ocal \backslash (F\otimes \R)^{1,+} \longrightarrow \Gamma \backslash g_\Ocal^{-1} Z_\8 \subset S_\Gamma.
\end{align}
The map is $\Gamma_\Ocal$-equivariant by \eqref{equiv embed F}, and its image defines a cycle
\begin{align}
Z_\Ocal \in Z_{N-1}(S_\Gamma;\Z),
\end{align}
which is compact and defines a homology class in $S_\Gamma$. It follows from the above computations (after conjugating by $g_\Ocal$) that
\begin{align}
\int_{Z_\Ocal} E_{\varphi,\chi_\m}(z_1,\tau,s)= \Escr_\m(\tau, s)
\end{align}
is the diagonal restriction of a Hilbert-Eisenstein series of parallel weight one
\begin{align}
\Escr_\m(\tau, s) = \sideset{}{^{\prime}}\sum_{(v,w) \in \m\times \m/\Gamma_\Ocal}\frac{\N(\im(\tauud))^{\frac{s}{2N}}}{ \N(v\tauud+w)\lvert \N(v\tauud+w)\rvert^{s/N}}.
\end{align}
Here $v\tauud+w$ denotes the vector in $\C^N$ that has entries $v_m \tau_m+w_m$, where $(v,w)=(v_1, \dots,v_N,w_1, \dots,w_N)$.
\begin{thm} \label{diagonal restriction}
The diagonal restriction of the Hilbert-Eisenstein series has the Fourier expansion
\begin{align}
\Escr_{\m}(\tau)=\int_{Z_\Ocal} z(\chi_\m)+(-1)^{\frac{N(N-1)^2}{2}}\sum_{n=1}^\8 (Z_\Ocal,Z_n(\chi_\m) ) q^n.
\end{align}
\end{thm}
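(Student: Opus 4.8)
The plan is to apply the theta lift \eqref{lift117} to the compact cycle $Z_\m\in H_{N-1}(S_\Gamma;\Z)$ of \eqref{geodesic cycles} and to match the two available descriptions of the resulting weight-one form. Since $Z_\m$ is compact and $E_{\varphi\chi}(z_1,\tau)$ is a smooth closed $(N-1)$-form on $S_\Gamma$ by Theorem \ref{theoremA}, the period $\int_{Z_\m}E_{\varphi\chi}(z_1,\tau)$ converges absolutely, and the theta-lift theorem of Section \ref{sec:thetalift}, applied to $Z=Z_\m$, gives at once
\[
\int_{Z_\m}E_{\varphi\chi}(z_1,\tau)=\int_{Z_\m}z(\chi)+(-1)^{\frac{N(N-1)^2}{2}}\sum_{n=1}^\8[Z_\m,Z_n(\chi)]\,q^n,
\]
using that the positive Fourier coefficients of $E_{\varphi\chi}$ are Poincaré duals to the $Z_n(\chi)$ (Proposition \ref{fouriercoeffn}), that the regular negative and isotropic parts are exact off the cycles (Proposition \ref{exactform}), and that the constant term is $z(\chi)$ (Proposition \ref{prop eulerclass}). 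It thus remains only to identify this period with the diagonal restriction $\Escr_\chi(\tau)$.

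For the Eisenstein side I would argue by analytic continuation in $s$. The regularized form $E_{\varphi\chi}(z_1,\tau,s)=p_\ast(\Theta_{\varphi\chi}(z,\tau)u^{-s})$ converges absolutely for every $s\in\C$ (Theorem \ref{theoremA}) and, by the rapid-decrease estimates of Proposition \ref{rapiddecrease} together with Morera's theorem, is holomorphic in $s$ and smooth in $z_1$; since $Z_\m$ is compact, Fubini and Morera show that $s\mapsto\int_{Z_\m}E_{\varphi\chi}(z_1,\tau,s)$ is entire, and as $s\to 0$ the integrand converges uniformly on $Z_\m$ to $E_{\varphi\chi}(z_1,\tau)$, so its value at $s=0$ is precisely $\int_{Z_\m}E_{\varphi\chi}(z_1,\tau)$. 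On the other hand, for $\re(s)>N$ the term-by-term integration is justified by Proposition \ref{analytic continuation 2} (using $\four_1(\varphi)(0)=\four_2(\varphi)(0)=0$ from Proposition \ref{phisigma}), and the computation of Section \ref{real tori} — Poisson summation in the second variable, followed by integrating the resulting Gaussians over the split torus via the gamma-integral identity — expresses $\int_{Z_\m}E_{\varphi\chi}(z_1,\tau,s)$ as the diagonal restriction $\Escr_\chi(\tau,s)$ of the Hilbert--Eisenstein series. By uniqueness of analytic continuation the two entire functions of $s$ coincide, hence $\Escr_\chi(\tau)=\Escr_\chi(\tau,0)=\int_{Z_\m}E_{\varphi\chi}(z_1,\tau)$, and combining with the displayed formula above finishes the proof.

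The point requiring care is the passage from $\re(s)>N$ to $s=0$ on the Eisenstein side: the individual weight-one factors $E_1(\tau,\tfrac{a_m\tau+b_m}{p},s)$ only continue to $s=0$ when $\tfrac{a_m\tau+b_m}{p}\notin\Z\tau+\Z$, and a priori the sum of products $\sum_{\vbf_0}\four_2(\chi)(\vbf_0)\prod_m E_1$ could acquire a pole. However, the entireness of $\int_{Z_\m}E_{\varphi\chi}(z_1,\tau,s)$ established above — equivalently, the holomorphy of the lift in cohomology proved in Section \ref{sec:thetalift} — shows that no such pole occurs, so no independent analysis of the Hilbert--Eisenstein series is needed. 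Note also that, in contrast with Theorem \ref{theoremD} for the non-compact modular symbols $Z_Q$, no goodness hypothesis on $\chi$ is required here, precisely because $Z_\m$ is compact and the period converges unconditionally.
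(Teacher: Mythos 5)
Your proposal is correct and follows essentially the same route as the paper: the theorem is deduced by combining the torus-period computation of Section \ref{real tori} (termwise valid for $\re(s)>N$ and extended to $s=0$ via the everywhere-convergent regularized pushforward, exactly as you argue) with the general Fourier-expansion theorem for the lift $E_{\varphi\chi}$ from Section \ref{sec:thetalift}. One cosmetic slip: in your ``point requiring care'' you phrase the potential pole at $s=0$ in terms of the product $\prod_m E_1$, which is the formula for the split modular symbols $Z_Q$ of Theorem \ref{theoremD}, whereas for the compact cycle $Z_\m$ the relevant object is the genuine Hilbert--Eisenstein series $\Escr(\tauud,\lambda_0,s)$ (the sum over $(\m\oplus\m)/U(\m)^{1,+}$ does not factor); your entireness-in-$s$ argument handles this correctly in either case.
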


\section{The case $N=2$} \label{caseN2}

 Let $X$ be the space of positive definite symmetric $2$ by $2$ matrices and $S \subset X$ the matrices of determinant $1$. We have
 \begin{align}
     \HH \simeq \SL_N(\R)/\SO(2) \simeq S.
 \end{align}
 Let $z_1=a+ib$ be the coordinates on $\HH$. The first map sends $z_1 \in \HH$ to $g_1\SO(2)$ where
 \begin{align}
     g_1=\begin{pmatrix}
         \sqrt{b} & a/\sqrt{b} \\
         0 & 1/\sqrt{b}
     \end{pmatrix}.
 \end{align}
 The second map sends it to the positive definite quadratic form
 \begin{align}
         g_1 g^t_1=\frac{1}{b} \begin{pmatrix}
             a^2+b^2 & a \\ a & 1
         \end{pmatrix} \in S.
 \end{align}
 We extend it to
 \begin{align}
     \HH \times \R_{>0} \longrightarrow X,\qquad
     (z_1 , u) \longmapsto z_1u=\frac{u}{b} \begin{pmatrix}
             a^2+b^2 & a \\ a & 1
         \end{pmatrix}.
 \end{align}
Let $\chi=\id_{l \oplus \Z^2}$ be the characteristic function of the lattice coset $l \oplus \Z^2 \subset V_\Z$, where $l \subset \Z^2$ is the set
\begin{align*}
l=\left \{ \biggl . v=(v_1,v_2)^t \in \Z^2 \  \biggr \vert \ (v_1,p)=1, \ v_2 \in p\Z  \right \}.
\end{align*}
The function $\chi$ is preserved by $\Gamma=\Gamma_0(p)$ and the locally symmetric space is the modular curve
\begin{align}
    S_\Gamma = Y_0(p)=\Gamma_0(p)\backslash \HH.
\end{align}

\subsection{Hecke operators}
For two points $\alpha, \beta \in \PP^1(\Q)$ let $\{\alpha,\beta\}$ be the geodesic joining the two points on the boundary of $\HH$. For $n>0$ we define
\begin{align}
    \Delta_0(p)^{(n)} = \left \{ \left . M=\begin{pmatrix}
       a & b \\ pc & d 
    \end{pmatrix} \in \Mat_2(\Z) \ \right \vert \ \det(M)=n>0, \ (a,p)=1 \right \}.
\end{align}

The congruence subgroup $\Gamma_0(p) \subset \SL_N(\Z)$ acts on the left and the right on $\Delta_0(p)^{(n)}$ and the double coset
\begin{align} \label{doublecosets}
    \Rcal_0(p)^{(n)}=\Gamma_0(p) \backslash \Delta_0(p)^{(n)}
\end{align}
is finite. 
We will also denote by $\{\alpha,\beta\}$ the image of $\{\alpha,\beta\}$ in the modular curve $\Gamma_0(p) \backslash \HH$. The modular symbol $\{\alpha,\beta\}$ represents a $1$-cycle relative to the cusps of $\Gamma_0(p)\backslash \HH$. An explicit set of representatives for $\Rcal_0(p)^{(n)}$ is given by
\begin{align}
 \Delta_0(p)^{(n)} = \bigsqcup_{\substack {  dd'=n \\ d>0  \\ (d,p)=1} } \bigsqcup_{b=0}^{d'-1} \Gamma_0(p) \begin{pmatrix} 
      d & b \\ 0 & d'
  \end{pmatrix}.
\end{align}
The action of the coset representatives induces the Hecke operator
\begin{align}
    T_n\{\alpha,\beta\} \coloneqq \sum_{\substack {  dd'=n \\ d>0  \\ (d,p)=1} } \sum_{b=0}^{d'-1} \left \{  \frac{d\alpha+b}{d'},\frac{d\beta+b}{d'}\right \}.
\end{align}
In particular, we have
\begin{align}
    T_n\{0,\8\} = \sum_{\substack {  dd'=n \\ d>0  \\ (d,p)=1} } \sum_{b=0}^{d'-1}  \left \{ \frac{b}{d'},\8 \right \}.
\end{align}
The Hecke operators act on the homology $H_1(Y_0(p);\Z)$ in a similar way. 
\begin{prop} \label{cycleN2}
    We have $Z_n(\chi)=T_n\{0,\8\}$.
\end{prop}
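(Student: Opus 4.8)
The plan is to unwind the definitions on both sides and match the modular symbols $Z_\vbf$ with the Hecke translates $\{b/d',\infty\}$ via the correspondence between vectors $\vbf \in \Q^4$ with $B(\vbf)=n$ and cosets in $\Rcal_0(p)^{(n)}$. First I would describe the special cycle $S_\vbf \subset S = \HH$ explicitly when $N=2$: for $\vbf=(v,w)$ with $B(\vbf)=v^tw=n>0$, the submanifold $X_\vbf$ has codimension $2$, hence $S_\vbf$ is $0$-dimensional, i.e. a single point of $\HH$ (or rather its image in $Y_0(p)$). Concretely $z_1\in S_\vbf$ iff $v$ is proportional to $z_1 w$, so the point is determined by the line spanned by $w$ together with the line spanned by $v$; writing $w=(w_1,w_2)^t$ and $v=(v_1,v_2)^t$, the point of $\PP^1(\Q)$ cut out is (essentially) $w_2/w_1$ or, after using the matrix $m_\vbf$ from \eqref{mvprop}, the image of the geodesic endpoint under $m_\vbf$. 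Since the cycle $Z_\vbf$ depends only on the class of $\vbf$ in $\Gamma\backslash\Q^4/\Z^\times$ and $\chi$ is supported on $\Lcal^\vee = \Z^2\oplus\tfrac1p\Z^2$ (with the $L$ as in Example \ref{exampleLp}), the support condition forces $v_1$ coprime to $p$, $v_2\in p\Z$ and $w\in\Z^2$; one checks these are exactly the vectors whose associated matrix data lands in $\Delta_0(p)^{(n)}$.

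Next I would set up the bijection. Given $\vbf=(v,w)$ with $B(\vbf)=n$ and $\chi(\vbf)=1$, use the Hermite normal form argument already run in Proposition \ref{finiterep}: acting by $\Gamma_0(p)=\Gamma$ on the left, one reduces $w$ (on which $\Gamma$ acts by $\gamma^{-t}$) to a standard form, and the remaining data is a matrix $M_\vbf = [v \mid w]$ or a suitable variant, whose reduction modulo $\Gamma_0(p)$ on the left gives exactly a representative $\begin{pmatrix} d & b \\ 0 & d' \end{pmatrix}$ with $dd'=n$, $d>0$, $(d,p)=1$, $0\le b<d'$. The point $S_\vbf$ in $Y_0(p)$ is then the image of this matrix acting on the relevant endpoint; matching the action one sees that $Z_\vbf$ corresponds to the modular symbol $\bigl\{\tfrac{b}{d'},\infty\bigr\}$, exactly the summands of $T_n\{0,\infty\}$. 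Here I would invoke the remark after \eqref{jemb2} that $m_\vbf$ sends the line $\Q e_1$ to $\Q v$ and its dual to $\Q w$, so the embedded cycle is the translate by $m_\vbf$ (equivalently by the integral matrix representative) of the base cycle $\{0,\infty\}$, which for $N=2$ is the image of $\R_{>0}\hookrightarrow \HH$, i.e. the geodesic from $0$ to $\infty$. The orientation bookkeeping from the ``Orientability'' subsection must be tracked to make sure the signs match, but since $N=2$ the sign $(-1)^N=1$ so $Z_{-\vbf}=Z_\vbf$ with the same orientation, consistent with $\{\alpha,\beta\}$ being unoriented data up to the standard conventions.

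The key steps, in order, are: (1) identify $S_\vbf$ explicitly as the image of the geodesic $m_\vbf\{0,\infty\}$ in $Y_0(p)$, using \eqref{jemb2}, \eqref{mvprop} and Proposition \ref{diffeomorphism}; (2) use the support of $\chi$ to restrict to vectors with $v_1$ coprime to $p$, $v_2 \in p\Z$, $w \in \Z^2$, and hence to matrices in $\Delta_0(p)^{(n)}$; (3) run the Hermite/$\Gamma_0(p)$-reduction of Proposition \ref{finiterep} to produce a bijection between $\{\vbf \in \Gamma\backslash\Q^4 : B(\vbf)=n,\ \chi(\vbf)\ne 0\}$ and the coset representatives $\bigl\{\begin{smallpmatrix} d & b \\ 0 & d' \end{smallpmatrix} : dd'=n,\ d>0,\ (d,p)=1,\ 0\le b<d'\bigr\}$, compatible with the cycle maps; (4) conclude $Z_n(\chi) = \sum \chi(\vbf) Z_\vbf = \sum_{dd'=n,(d,p)=1}\sum_{b=0}^{d'-1}\{b/d',\infty\} = T_n\{0,\infty\}$. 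I expect the main obstacle to be step (3): pinning down precisely which matrix representative in $\Delta_0(p)^{(n)}$ is attached to a given $\vbf$ so that the double-coset bookkeeping (including multiplicities — each standard representative should be hit exactly once and with coefficient $\chi(\vbf)=1$) matches the Ash–Rudolph / Hecke normalization, and verifying the action on the base geodesic sends $\{0,\infty\}$ to $\{b/d',\infty\}$ rather than to some $\SL_2(\Z)$-translate thereof. Once the dictionary $\vbf \leftrightarrow M$ is fixed, the rest is a direct comparison of the two finite sums.
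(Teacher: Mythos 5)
Your overall route is the same as the paper's: describe $S_\vbf$ explicitly as a geodesic in $\HH$, then use the Hermite-normal-form reduction from Proposition \ref{finiterep} to match the classes $\vbf \in \Gamma_0(p)\backslash\{B(\vbf)=n,\ \chi(\vbf)\neq 0\}$ with the coset representatives $\begin{pmatrix} d & b \\ 0 & d' \end{pmatrix}$, $dd'=n$, $(d,p)=1$, $0\le b<d'$, whose associated cycles are the summands $\{b/d',\8\}$ of $T_n\{0,\8\}$. The paper does exactly this, except that it computes the geodesic by solving $v=z_1uw$ directly in the coordinates $z_1=a+ib$ (obtaining the circle $(a+\tfrac{v_2w_2-v_1w_1}{2w_1v_2})^2+b^2=(\tfrac{v_2w_2+v_1w_1}{2w_1v_2})^2$, i.e.\ the geodesic with endpoints determined by the lines $\Q v$ and $\Q w$) rather than invoking $m_\vbf\{0,\8\}$; the two are equivalent.

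There is, however, one concrete error in your step (1): you assert that $X_\vbf$ has codimension $2$ and \emph{hence} $S_\vbf$ is $0$-dimensional, ``a single point of $\HH$.'' The codimension-$2$ condition lives in $X\simeq \HH\times\R_{>0}$, which is $3$-dimensional, so $X_\vbf$ is a curve; by Proposition \ref{diffeomorphism} the projection $p$ restricts to a diffeomorphism $X_\vbf\to S_\vbf$, so $S_\vbf$ has codimension $N-1=1$ in $S=\HH$ and is a geodesic, not a point. (Concretely, ``$v$ proportional to $z_1w$'' is one scalar equation on $z_1$, the vanishing of $\det[v\,|\,z_1w]$.) This claim contradicts both the statement you are proving --- $T_n\{0,\8\}$ is a $1$-cycle --- and your own later description of $Z_\vbf$ as the translate $m_\vbf\{0,\8\}$ of a geodesic. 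Once that sentence is replaced by the correct dimension count, the rest of your outline (support of $\chi$ forcing $(v_1,p)=1$, $v_2\in p\Z$, $w\in\Z^2$; the reduction to representatives $\vbf=((d,0)^t,(d',b)^t)$; the identification of the resulting geodesic with $\{b/d',\8\}$ up to the harmless sign $b\mapsto -b$) goes through and reproduces the paper's proof.
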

\begin{proof}
Let $\vbf=(v,w)$ where $v=(v_1,v_2)^t$ and $w=(w_1,w_2)^t$. Let us compute $X_{\vbf}$.
 It is the set of $z=(z_1,u)$ such that $v=z_1uw$ {\em i.e.}
 \begin{align}
     \begin{pmatrix}
         v_1 \\ v_2
     \end{pmatrix}= \frac{u}{b} \begin{pmatrix}
             a^2+b^2 & a \\ a & 1
         \end{pmatrix} \begin{pmatrix}
             w_1 \\ w_2
         \end{pmatrix}= \frac{u}{y}\begin{pmatrix}
             w_1(a^2+b^2)+w_2a \\ w_1a+w_2 
         \end{pmatrix}.
 \end{align}
 Hence the projection $S_{\vbf}$ is the set of $\tau$ such that
 \begin{align} \label{quadeq}
      v_2(w_1(a^2+b^2)+w_2a)&=v_1(w_1a+w_2) \\
      & (=bu^{-1} \ \textrm{for some} \ u \in \R_{>0} ). \nonumber
 \end{align}
 If $w_1=0$, then $v_2 \neq 0$ and $w_2 \neq 0$ since $Q(\vbf)>0$. The equation is $v_2a=v_1$ and describes the geodesic from $\frac{v_1}{v_2}$ to $\8$ in $\HH$. 
 If $v_2=0$, then $v_1 \neq 0$ and $w_1 \neq 0$. The equation is $0=w_1a+w_2$ and describes the geodesic from $-\frac{w_2}{w_1}$ to $\8$.
 If $w_1v_2 \neq 0$, then we can rewrite equation \eqref{quadeq} as
 \begin{align}
     \left ( a+\frac{v_2w_2-v_1w_1}{2w_1v_2} \right )^2+b^2=\left ( \frac{v_2w_2+v_1w_1}{2w_1v_2} \right )^2,
 \end{align}
which describes the geodesic with endpoints $\frac{w_2}{w_1}$ and $-\frac{v_1}{v_2}$. We have showed that the submanifold is the geodesic
    \begin{align}
      X_\vbf=\left \{ \frac{w_2}{w_1},-\frac{v_1}{v_2}\right \} \qquad  \textrm{if} \ \ \vbf =\left ( \begin{pmatrix}
          w_1 \\ w_2
      \end{pmatrix} ,\begin{pmatrix}
          v_1 \\ v_2
      \end{pmatrix}\right ).
    \end{align}
By the proof of Proposition \ref{finiterep}, we find that a set of representatives for $\Gamma_0(p) \backslash (l \times \Z^2)
$ is
\begin{align} \label{representatives}
        R_n = \left \{ \left . \vbf = \left ( \begin{pmatrix}
        d \\ 0
    \end{pmatrix},\begin{pmatrix}
        d' \\ b
    \end{pmatrix} \right ) \right \vert n=dd', \ (d,p)=1, \ 0 <d, \ 0 \leq b \leq d'-1  \right \}.
    \end{align}
 Hence, we have
\begin{align}
    Z_n(\chi)=\sum_{\substack {  dd'=n \\ d>0  \\ (d,p)=1} } \sum_{b=0}^{d'-1} \left \{\frac{b}{d'},\8 \right \}=T_n\{0,\8\}.
\end{align}
\end{proof}

The Hecke operators also act on differentials forms as follows. If $\omega \in \Omega^1(\HH)^{\Gamma_0(p)}$ is a  $\Gamma_0(p)$-invariant form on $\HH$, then
\begin{align}
T_n \omega \coloneqq \sum_{X \in \Rcal_0(p)^{(n)}} X^\ast \omega
\end{align}
is a  $\Gamma_0(p)$-invariant form on $\HH$. It induces an action of Hecke operators on the cohomology group $H^1(Y_0(p))$. If $\omega_{\{\alpha,\beta\}} \in \Omega^1(Y_0(p))$ is a Poincaré dual to  $\{\alpha,\beta\}$, then $T_n\omega_{\{\alpha,\beta\}}=\omega_{T_n\{\alpha,\beta\}}$. Finally, the Hecke operators acts on a modular form $f \in \Mcal_2(\Gamma_0(p))$ by
\begin{align}
T_nf(z) \coloneqq  \sum_{\substack {  dd'=n \\ d>0  \\ (d,p)=1} } \frac{1}{(d')^2} \sum_{b=0}^{d'-1} f\left (\frac{dz+b}{d'} \right ).
\end{align}
If $\omega_{f}=f(z)dz$, then $T_n(\omega_{f})=\omega_{T_n f}$.

\subsection{The theta lift when $N=2$} With the setup as above, the theta lift is
\begin{align}
    E_{\varphi, \chi} \colon H_1(Y_0(p);\C) \simeq H^1_c(Y_0(p);\C) \longrightarrow \Mcal_2(\Gamma_0(p)).
\end{align}
Let $\{E_2^{(p)},f_1, \dots,f_r\}$ be a basis of $\Mcal_2(\Gamma_0(p))$, where $f_i \in \Scal_2(\Gamma_0(p))$ is a basis of normalized newforms and $E_2^{(p)}$ is the Eisenstein series 
\begin{align}
E_2^{(p)}=\frac{p-1}{24}+\sum_{n \geq 1} \sigma_1^{(p)}(n)q^n.
\end{align}

We define $\omega_E \coloneqq E_2^{(p)}(\tau)d\tau$ and $\omega^{\pm}_f \coloneqq (\omega_{f} \pm \overline{\omega}_{f})$ where $\omega_f=f(\tau)d\tau$.

\begin{cor}
For $\omega \in \Omega_c^{1}(Y_0(p);\C)$ we have the Fourier expansion
    \begin{align}
       E_{\varphi, \chi}(\omega,\tau)= (\omega,\omega_E)- \sum_{n=1}^\8( \omega,T_n\{0,\8\} ) q^n \in M_{2}(\Gamma_0(p)).
    \end{align}
and the spectral expansion
\begin{align}
    E_{\varphi, \chi}(\omega) = \frac{24}{p-1} ( \omega , \omega_E ) E_2^{(p)}- \sum_{i=1}^r \frac{L(f_i,1)}{i\pi\lVert f_i \rVert^2}( \omega ,\omega_{f_i}^+ ) f_i.
\end{align}
In particular, we deduce that the image of this lift is precisely the space $\Mcal^0_2(\Gamma_0(p))$ spanned by $E_2^{(p)}$ and the forms $f_i$ for which $L(f_i,1) \neq 0$.
Moreover, the lift is Hecke equivariant {\em i.e.}, satisfies $E_{\varphi, \chi}(T_n\omega) =T_nE_{\varphi, \chi}(\omega)$ when $(p,n)=1$. 
\end{cor}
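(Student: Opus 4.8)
The plan is to extract all four assertions from results already established, the only genuine computation being a Rankin--Selberg unfolding. First, for the test function $\chi$ of this section one has $\Gamma'=\Gamma_0(p)$, so the lift theorem of Section~\ref{sec:thetalift} (the case $N=2$ of Theorem~\ref{theoremA}) already shows that $E_{\varphi\chi}(\omega,\tau)$ is a holomorphic modular form of weight $2$ for $\Gamma_0(p)$ with
\[
E_{\varphi\chi}(\omega,\tau)=[\omega,z(\chi)]+(-1)^{\frac{N(N-1)^2}{2}}\sum_{n\geq 1}[\omega,Z_n(\chi)]\,q^n .
\]
At $N=2$ the sign equals $(-1)^{1}=-1$, and Proposition~\ref{cycleN2} identifies $Z_n(\chi)=T_n\{0,\infty\}$, which yields the $q^n$-coefficients $-[\omega,T_n\{0,\infty\}]$ at once. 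What remains for the Fourier expansion is the constant term, i.e.\ to show $[\omega,z(\chi)]=[\omega,\omega_E]$ for every $\omega$; equivalently, that the Eisenstein cohomology class $z(\chi)$ equals $[\omega_E]=[E_2^{(p)}(\tau)\,d\tau]$ in $H^1(Y_0(p);\C)$. I would obtain this from Proposition~\ref{prop eulerclass}, which writes $z(\chi)$ in terms of the topological Eisenstein classes, together with the classical description of the (one-dimensional) Eisenstein subspace $H^1_{\Eis}(Y_0(p);\C)$ by weight-two Eisenstein differentials and a matching of normalizations at the cusps $0$ and $\infty$; alternatively it is forced once the spectral expansion below is known, as the remark preceding the statement indicates.

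For the spectral expansion, write $E_{\varphi\chi}(\omega)=a_0E_2^{(p)}+\sum_{i=1}^{r}c_if_i$ in $\Mcal_2(\Gamma_0(p))$ and determine the two kinds of coefficients separately. The cuspidal ones are $c_i=\lVert f_i\rVert^{-2}\langle E_{\varphi\chi}(\omega),f_i\rangle_{\pet}=\lVert f_i\rVert^{-2}[\omega,\Lambda_{\varphi\chi}(f_i)]$, using the adjunction $[\omega,\Lambda_{\varphi\chi}(f)]=\langle E_{\varphi\chi}(\omega),f\rangle_{\pet}$ from Section~\ref{sec:thetalift}. To evaluate this I would insert the Eisenstein-series expression for $E_{\varphi\chi}(z_1,\tau,s)$ from Theorem~\ref{theoremB} (Proposition~\ref{eisensteinseries}) into $\int_{\Gamma_0(p)\backslash\HH}E_{\varphi\chi}(z_1,\tau)\overline{f_i(\tau)}\,dx\,dy$ (for $N=2$ the factor $y^{N-2}$ is trivial) and carry out the Rankin--Selberg unfolding, sorting the sum over $\vbf=(v,w)\in\Q^4$ into $\Gamma_0(p)$-orbits: the orbits with infinite stabilizer unfold against $\overline{f_i}$ to produce the central value $L(f_i,1)$ (via the classical integral representation $L(f_i,1)=-2\pi i\int_0^{i\infty}f_i(\tau)\,d\tau$) together with a period of $f_i$, while the remaining orbits assemble into a class with no cuspidal part; evaluating the Gamma-factor $\Lambda(s)$ at $s=0$ and collecting the form factors $\iotau\lambda(\sigma)$ should give $\Lambda_{\varphi\chi}(f_i)=-\tfrac{L(f_i,1)}{i\pi}\,\omega_{f_i}^{+}$, hence $c_i=-\tfrac{L(f_i,1)}{i\pi\lVert f_i\rVert^{2}}[\omega,\omega_{f_i}^{+}]$. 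The constant coefficient is then forced: $E_2^{(p)}$ has $q^0$-coefficient $\tfrac{p-1}{24}$ and the $f_i$ none, so $a_0=\tfrac{24}{p-1}[\omega,z(\chi)]=\tfrac{24}{p-1}[\omega,\omega_E]$, and the $q^0$-coefficient of $E_{\varphi\chi}(\omega,\tau)$ is then $[\omega,\omega_E]$, completing the Fourier expansion as well. I expect this unfolding -- the counterpart in the present geometric setting of the diagonal-restriction computation of Darmon--Pozzi--Vonk~\cite{dpv} -- to be the main obstacle, as one must keep the archimedean factors and the combinatorics of the $\vbf$-sum under control and verify that the non-cuspidal terms contribute only an Eisenstein class.

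For the statement about the image, the formula just obtained shows $\image(E_{\varphi\chi})\subseteq\Mcal_2^0(\Gamma_0(p))$. Conversely, the Poincaré pairing $H^1_c(Y_0(p);\C)\times H^1(Y_0(p);\C)\to\C$ is perfect, and the classes $\omega_E$ and $\{\omega_{f_i}^{+}:L(f_i,1)\neq 0\}$ are linearly independent in $H^1(Y_0(p);\C)$ (the first in $H^1_{\Eis}$, the others in pairwise-distinct Hecke eigenplanes of the cuspidal part), so $\omega$ can be chosen so as to prescribe the values $[\omega,\omega_E]$ and the $[\omega,\omega_{f_i}^{+}]$ independently; hence every element of $\Mcal_2^0(\Gamma_0(p))$ lies in the image, and $\image(E_{\varphi\chi})=\Mcal_2^0(\Gamma_0(p))$.

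Finally, for Hecke equivariance with $(n,p)=1$ I would compare $q$-expansions. For $k\geq 1$ the $k$-th coefficient of $E_{\varphi\chi}(T_n\omega)$ is $-[T_n\omega,T_k\{0,\infty\}]$, which equals $-[\omega,T_nT_k\{0,\infty\}]$ since the operators $T_n$ with $(n,p)=1$ are self-adjoint for the pairing and act compatibly on homology and cohomology; using the Hecke multiplicativity $T_nT_k\{0,\infty\}=\sum_{d\mid(n,k),\,(d,p)=1}d\,T_{nk/d^{2}}\{0,\infty\}$ on modular symbols together with the corresponding recursion $a_k(T_n g)=\sum_{d\mid(n,k),\,(d,p)=1}d\,a_{nk/d^{2}}(g)$ for weight-$2$ forms, this is $a_k(T_nE_{\varphi\chi}(\omega))$; for $k=0$ the same follows from $[T_n\omega,\omega_E]=[\omega,T_n\omega_E]=\sigma_1^{(p)}(n)[\omega,\omega_E]$, since $E_2^{(p)}$ is a Hecke eigenform of eigenvalue $\sigma_1^{(p)}(n)$. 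Hence $E_{\varphi\chi}(T_n\omega)=T_nE_{\varphi\chi}(\omega)$.
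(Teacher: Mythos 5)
Your treatment of the $q^n$-coefficients for $n\geq 1$, of the image, and of the Hecke equivariance is sound and essentially matches the paper. The genuine gap is in the spectral expansion, which is the heart of the corollary: you reduce it to the identity $\Lambda_{\varphi\chi}(f_i)=-\tfrac{L(f_i,1)}{i\pi}\,\omega_{f_i}^{+}$ and then say a Rankin--Selberg unfolding of $E_{\varphi\chi}(z_1,\tau,s)$ against $\overline{f_i}$ ``should give'' it. That identity \emph{is} the theorem; nothing in your sketch controls the orbit decomposition, the archimedean factors, or why exactly the combination $\omega_{f}^{+}$ (and not $\omega_f^-$ or $\omega_f$) appears, so as written the main step is asserted rather than proved. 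A second, smaller gap: your determination of the Eisenstein coefficient $a_0$ silently substitutes $[\omega,z(\chi)]=[\omega,\omega_E]$, which is precisely the unproved identification of the constant term, so that part of the argument is circular as stated.

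The paper takes a route that makes the hard analysis unnecessary, and you already have all its inputs. Let $G_w\in H^1(Y_0(p);\C)$ be the Poincar\'e dual of $\{0,\8\}$, i.e.\ $[\omega,G_w]=\int_0^\8\omega$. Expanding $G_w=\alpha_E\omega_E+\sum_i(\alpha_{f_i}^+\omega_{f_i}^++\alpha_{f_i}^-\omega_{f_i}^-)$ and using only the classical period relations $[\omega_{f}^-,G_w]=-L(f,1)/(i\pi)$, $[\omega_{f}^+,G_w]=0$, $[\omega_{f_i}^{\pm},\omega_{f_j}^{\pm}]=0$, $[\omega_{f_i}^+,\omega_{f_j}^-]=-2\langle f_i,f_j\rangle$, together with $[\omega_0,G_w]=1$ for the parabolic class $\omega_0$, one finds $\alpha_E=\tfrac{24}{p-1}$, $\alpha_{f_i}^-=0$, $\alpha_{f_i}^+=-\tfrac{L(f_i,1)}{i\pi\lVert f_i\rVert^2}$. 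Since the $n$-th coefficient ($n\geq1$) of your Fourier expansion is $-[\omega,T_n\{0,\8\}]=[\omega,T_nG_w]$ up to the sign bookkeeping, and $T_n$ acts on each eigencomponent of $G_w$ by $a_n(g)$, the generating series assembles term-by-term into $\tfrac{24}{p-1}[\omega,\omega_E]E_2^{(p)}-\sum_i\tfrac{L(f_i,1)}{i\pi\lVert f_i\rVert^2}[\omega,\omega_{f_i}^+]f_i$. Because a weight-$2$ holomorphic form on $\Gamma_0(p)$ is determined by its coefficients $a_n$, $n\geq1$, this \emph{forces} the constant term to equal $\tfrac{24}{p-1}[\omega,\omega_E]\cdot\tfrac{p-1}{24}=[\omega,\omega_E]$, which proves (rather than assumes) the identification $[\omega,z(\chi)]=[\omega,\omega_E]$. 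In short: the spectral expansion is a formal consequence of the Fourier expansion you already established, and your proposed unfolding, besides being left unexecuted, attacks a much harder computation than the statement requires.
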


\begin{proof} The first part follows from Proposition \ref{cycleN2} and the Fourier expansion of $E_{\varphi, \chi}$. The winding element $G_w \in H^1(Y_0(p))$ is such that
\begin{align}
    ( \omega, G_w )=\int_{0}^\8 \omega.
\end{align}
We follow the proof of \cite[Lemma.~3.4]{dpv} with sligthly different normalizations. We write the winding element as 
\begin{align}
G_w=\alpha_E \omega_E+\sum_{i=1}^r (\alpha_{f_i}^+\omega_{f_i}^++\alpha_{f_i}^-\omega_{f_i}^-).
\end{align}
If $\omega_0$ denotes the homology class represented by $\begin{pmatrix}
1 & 1 \\ 0 & 1
\end{pmatrix}$, then we deduce from $[ \omega_0, \omega_E ]=a_0(E_2^{(p)})$ and $( \omega_0, G_w )=1$ that
\begin{align}
\alpha_E=\frac{24}{p-1}.
\end{align}
Using the following properties
\begin{align}\label{properties pairing}
(\omega_{f_i}^+,\omega_{f_j}^+ )&=( \omega_{f_i}^-,\omega_{f_j}^- )=0, \nonumber \\
( \omega_{f_i}^+,\omega_{f_j}^- )&=-( \omega_{f_j}^-,\omega_{f_i}^+ )=-2 \langle f_i, f_j \rangle_\pet \nonumber \\
( \omega_{f}^-,G_w )&=-\frac{L(f,1)}{i\pi},  \\
( \omega_{f}^+,G_w )&=0,
\end{align}
we find that
\begin{align}
\alpha_{f_i}^-=0, \qquad \alpha_{f_i}^+=-\frac{L(f_i,1)}{i\pi\lVert f_i \rVert^2},
\end{align}
and
\begin{align}
    G_w= \frac{24}{p-1} \omega_E-\sum_{i=1}^r \frac{L(f_i,1)}{i\pi\lVert f_i \rVert^2} \omega_{f_i}^+.
\end{align}
Since $f_i$ is a newform and has real Fourier coefficients, the $n$-th Fourier coefficient of $E_{\varphi, \chi}(\omega)$ is
\begin{align}
 ( \omega, T_n G_w ) = \frac{24}{p-1} ( \omega , \omega_E ) a_n(E_2^{(p)})- \sum_{i=1}^r \frac{L(f_i,1)}{i\pi\lVert f_i \rVert^2} ( \omega ,\omega_{f_i}^+ ) a_n(f_i).
\end{align}
Thus, we have
\begin{align}
    E_{\varphi, \chi}(\omega) = \frac{24}{p-1} ( \omega , \omega_E ) E_2^{(p)}- \sum_{i=1}^r \frac{L(f_i,1)}{i\pi\lVert f_i \rVert^2}(\omega ,\omega_{f_i}^+ )f_i.
\end{align}
The surjectivity follows from the fact that 
\begin{align}
E_{\varphi, \chi}(\omega_E)& =E_{2}^{(p)} \nonumber \\
E_{\varphi, \chi}(\omega_{f_i}^-)& =\frac{2L(f_i,1)}{i\pi}f_i  \\
E_{\varphi, \chi}(\omega_{f_i}^+)& =0. \nonumber
\end{align}

Finally, since the Hecke operator $T_n$ is self-adjoint when $(n,p)=1$ \cite[Theorem.~4.5.4]{miyake}, we have 
\begin{align}
(T_n \omega , \omega_E ) & = (\omega , T_n\omega_E ) E_2^{(p)} =a_n(E_2^{(p)}) ( \omega , \omega_E ). 
\end{align}
Similarly, we have $(T_n \omega , \omega_{f_i}^- )= a_n(f)( \omega , \omega_{f_i}^- )$ and this proves the Hecke equivariance.
\end{proof}
\begin{rmk}\label{remark span Tn} It follows from the proof of the the previous corollary that the kernel of $E_{\varphi, \chi}$ is spanned by all the forms $\omega_{f_i}^+$, and the forms $\omega_{f_i}^-$ for which $L(f_i,1) \neq 0$. Moreover, by Theorem \ref{theoremC416} we have
\begin{align}
\ker(E_{\varphi, \chi}) \simeq \Span \{T_n\{0,\8\} \ \vert \ n \in \NN_{>0} \}^\perp \subset H^1_c(Y_0(p);\C).
\end{align}
It follows that
\begin{align}
\Span \{T_n\{0,\8\} \ \vert \ n \in \NN_{>0} \}^\perp=\span \{\omega_{f_i}^+\} \oplus \span \{\omega_{f_i}^- \ \textrm{such that} \ L(f_i,1)\neq 0\}.
\end{align}
This equality can be seen directly. Since the modular symbols generate the relative homology $H_1^{\BM}(Y_0(p);\C) \simeq H_1(X_0(p),\partial X_0(p);\C)$, the Poincaré-Lefschetz pairing is
\begin{align}
H^1_c(Y_0(p);\C) \times H_1(X_0(p),\partial X_0(p);\C)  \longrightarrow \C, \quad \left ( \omega,\{ \alpha,\beta\} \right ) = \int_\alpha^\beta \omega.
\end{align}
Since $H^1_c(Y_0(p),\C)$ is spanned by the newforms $\omega_{f_i}^+,\omega_{f_i}^-$, it follows from the properties \eqref{properties pairing} that $\left ( \omega_{f_i}^+, T_n\{0,\8\}   \right )=0$ and
\begin{align}
\left ( \omega_{f_i}^-, T_n\{0,\8\}  \right ) & = \left (  T_n^\ast \omega_{f_i}^-, \{0,\8\} \right ) \nonumber \\
& = \left ( \omega_{a_n(f_i)f_i}^-,\{0,\8\} \right ) \nonumber \\
& = -\frac{a_n(f_i)L(f_i,1)}{i\pi}.
\end{align}
\end{rmk}
We conclude by proving the following corollary (Corollary  \ref{corollaryG} in the introduction).
\begin{cor} The space $\Mcal_2^{0}(\Gamma_0(p))$ is spanned by diagonal restrictions of Hilbert-Eisenstein series.
\end{cor}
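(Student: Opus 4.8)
The plan is to combine the spectral decomposition from the previous corollary with Theorem \ref{diagonal restriction}, which expresses the period of $E_{\varphi\chi}$ along the geodesic cycle $Z_\m$ (attached to a real quadratic field) as the diagonal restriction $\Escr_\chi(\tau)$ of a Hilbert--Eisenstein series. The key input is that $H_1(Y_0(p);\C)$ is generated by the cycles $Z_\m$: these are the closed geodesics attached to real quadratic fields, and it is classical (going back to work on modular symbols and continued fractions, and used in \cite{dpv}) that such geodesic cycles span the full homology of the modular curve. Granting this, every element of $\Mcal_2^0(\Gamma_0(p))$, being in the image of the surjective lift $E_{\varphi\chi}$ of Corollary \ref{corollaryH}, is a $\C$-linear combination of the modular forms $E_{\varphi\chi}(Z_\m)$.

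First I would recall from Corollary \ref{corollaryH} that $E_{\varphi\chi}\colon H_1(Y_0(p);\C)\to\Mcal_2^0(\Gamma_0(p))$ is surjective. Next, I would invoke the generation statement: the classes $[Z_\m]$, as $\m$ ranges over ideals in real quadratic orders (equivalently, as the geodesics run over all closed geodesics on $Y_0(p)$), span $H_1(Y_0(p);\C)$. Therefore $\Mcal_2^0(\Gamma_0(p))=\Span_\C\{E_{\varphi\chi}(Z_\m,\tau)\}$. Finally, by Theorem \ref{diagonal restriction} (the $N=2$ case), each $E_{\varphi\chi}(Z_\m,\tau)$ equals the diagonal restriction $\Escr_\chi(\tau)$ of a Hilbert--Eisenstein series of parallel weight one for a congruence subgroup of $\SL_2(\Ocal)$, where $\Ocal$ is the ring of integers of the relevant real quadratic field. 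Assembling these three facts gives that $\Mcal_2^0(\Gamma_0(p))$ is spanned by diagonal restrictions of Hilbert--Eisenstein series.

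The main obstacle is justifying that the geodesic cycles $Z_\m$ span $H_1(Y_0(p);\C)$. One way is to note that $H_1(Y_0(p);\C)$ decomposes under the Hecke algebra into the Eisenstein line (spanned by the boundary of the winding element) and the cuspidal part $\bigoplus_i(\C\omega_{f_i}^+\oplus\C\omega_{f_i}^-)$; one then checks that the span of the $[Z_\m]$ is Hecke-stable and has nonzero projection onto each Hecke eigenspace, which forces it to be everything. Alternatively, and more directly in the spirit of \cite{dpv}, one uses that the periods $\int_{Z_\m}\omega_{f_i}^{\pm}$ are (up to nonzero constants) central values or derivatives of Rankin--Selberg $L$-functions that do not all vanish, together with the analogous non-vanishing against $\omega_E$; since the $\omega_E,\omega_{f_i}^{\pm}$ form a basis of $H^1_c(Y_0(p);\C)\cong H_1(Y_0(p);\C)^\vee$, non-degeneracy of the pairing against the $Z_\m$ yields the spanning. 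I would cite \cite{dpv} for this last point rather than reproving it. The remaining steps are formal given the results already established in the paper.
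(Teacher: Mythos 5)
Your three-step skeleton (surjectivity of $E_{\varphi\chi}$ onto $\Mcal_2^0(\Gamma_0(p))$, spanning of $H_1(Y_0(p);\C)$ by geodesic cycles, and Theorem \ref{diagonal restriction} identifying each $E_{\varphi\chi}(Z_\m)$ with a diagonal restriction) is exactly the paper's, but you handle the one substantive step --- that closed geodesics span the homology --- quite differently, and less completely. The paper's argument is elementary and self-contained: $H_1(Y_0(p);\C)$ is generated by classes $\{z,\gamma z\}$ with $\gamma$ parabolic or hyperbolic (elliptic elements have finite order, hence die in homology), and explicit $2\times 2$ matrix identities exhibit the parabolic generators stabilizing $0$ and $\infty$ as products of hyperbolic elements of $\Gamma_0(p)$; since every parabolic is conjugate to a power of one of these, hyperbolic classes alone generate, and each such class is a cycle $Z_\Ocal$ attached to a real quadratic order. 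Your two alternative justifications carry real gaps as stated. For the Hecke-theoretic route, each cuspidal Hecke eigenspace in $H_1(Y_0(p);\C)$ is two-dimensional (spanned by $\omega_{f_i}^+$ and $\omega_{f_i}^-$ under the duality), so a Hecke-stable subspace with ``nonzero projection onto each eigenspace'' need not be everything; you would additionally need to split by the complex-conjugation involution and verify non-vanishing on each one-dimensional piece, which is again a non-vanishing problem. For the $L$-function route, you are invoking non-vanishing of twisted central $L$-values over varying real quadratic fields --- a genuine analytic input far deeper than what is needed --- and it is not clear that \cite{dpv} proves the homological spanning statement in the form you want to cite. Your strategy can certainly be completed, but as written it outsources the only nontrivial point to a citation or to unverified non-vanishing, whereas the paper disposes of it in a few lines of matrix algebra.
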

\begin{proof} The homology $H_1(Y_0(p),\C)$ is generated by the images in the quotient of paths $\{z, \gamma z\}$ in $\HH$, as $\gamma$ ranges through parabolic and hyperbolic matrices in $\Gamma_0(p)$ (since elliptic matrices have finite order, their image in homology is trivial). First, we claim that it is generated by paths $\{z, \gamma z\}$ where $\gamma$ is hyperbolic, since any parabolic matrix can be writen as a product of hyperbolic matrices. For the standard parabolic matrix stabilizing $\8$, we can explicitly write
\begin{align}
\begin{pmatrix}
1 & 1 \\ 0 & 1
\end{pmatrix}=\begin{pmatrix}
2p+1 & 2p+3 \\ p & p+1
\end{pmatrix}\begin{pmatrix}
p+1 & -p-2 \\ -p & p+1
\end{pmatrix},
\end{align}
where both matrices on the right-hand side are hyperbolic matrices in $\Gamma_0(p)$. Similarly, we find for the matrix stabilizing $0$
\begin{align}
\begin{pmatrix}
1 & 0 \\ p & 1
\end{pmatrix}=\begin{pmatrix}
p+1 & p+2 \\ p(p+2)& p^2+3p+1
\end{pmatrix}\begin{pmatrix}
p+1 & -p-2 \\ -p & p+1
\end{pmatrix}.
\end{align}
Since $\Gamma_0(p)\backslash \PP^1(\Q)$ is represented by the two classes $0$ and $\8$, any other parabolic matrix is conjugated over $\Gamma_0(p)$ to (the power of) one of the two matrices above. 
Finally, if $\gamma=\begin{pmatrix}
a & b \\ c & d
\end{pmatrix}$ is hyperbolic, then the image of $\{z, \gamma z\}$ is the projection of the geodesic with endpoints the real quadratic points
\begin{align}
\frac{a-d \pm \sqrt{\tr(\gamma)^2-4}}{2c}.
\end{align}
This geodesic can be obtained as a cycle $Z_\Ocal$ in \ref{real tori} by suitably embedding the ring of integers $\Ocal$ of $\Q(\sqrt{\tr(\gamma)^2-4})$ in $\Z^2$. It follows from Theorem \ref{diagonal restriction} that the lift of this cycle is the diagonal restriction of a Hilbert-Eisenstein series for the real quadratic field $\Q(\sqrt{\tr(\gamma)^2-4})$.
\end{proof}

\printbibliography
\end{document}